\newcommand\E{\mathbb{E}}
\newcommand\Z{\mathbb{Z}}
\newcommand\Q{\mathbb{Q}}
\newcommand\R{\mathbb{R}}
\newcommand\F{\mathbb{F}}
\newcommand\C{\mathbb{C}}
\newcommand\N{\mathbb{N}}
\newcommand\X{\mathcal{X}}
\newcommand\eps{\varepsilon}
\newcommand\Bohr{\mathrm{Bohr}}
\newcommand\id{\operatorname{id}}
\newcommand\Aut{\operatorname{Aut}}
\newcommand\aderiv{{\partial}}
\newcommand\HK{\mathrm{HK}}
\newcommand\Poly{\mathrm{Poly}}
\newcommand\st{\mathrm{st}}
\newcommand\n{{\mathrm{n}}}
\newcommand\Hom{\operatorname{Hom}}
\newtheorem{theorem}{Theorem}[section]
\newtheorem{corollary}[theorem]{Corollary}
\newtheorem{conjecture}[theorem]{Conjecture}
\newtheorem{lemma}[theorem]{Lemma}
\newtheorem{proposition}[theorem]{Proposition}
\theoremstyle{definition}
\newtheorem{definition}[theorem]{Definition}
\newtheorem{remark}[theorem]{Remark}
\newtheorem{example}[theorem]{Example}
\newcommand{\subalign}[1]{%
  \vcenter{%
    \Let@ \restore@math@cr \default@tag
    \baselineskip\fontdimen10 \scriptfont\tw@
    \advance\baselineskip\fontdimen12 \scriptfont\tw@
    \lineskip\thr@@\fontdimen8 \scriptfont\thr@@
    \lineskiplimit\lineskip
    \ialign{\hfil$\m@th\scriptstyle##$&$\m@th\scriptstyle{}##$\hfil\crcr
      #1\crcr
    }%
  }%
}
\begin{document}

\begin{frontmatter}[classification=text]


\author[aj]{Asgar Jamneshan\thanks{Supported by DFG-research fellowship JA 2512/3-1.}}
\author[tao]{Terence Tao\thanks{Supported by a Simons Investigator grant, the James and Carol Collins Chair, the Mathematical Analysis \& Application Research Fund Endowment, and by NSF grant DMS-1764034.}}

\begin{abstract}
We state and prove a quantitative inverse theorem for the Gowers uniformity norm $U^3(G)$ on an arbitrary finite abelian group $G$; the cases when $G$ was of odd order or a vector space over ${\mathbf F}_2$ had previously been established by Green and the second author and by Samorodnitsky respectively by Fourier-analytic methods, which we also employ here.  We also prove a qualitative version of this inverse theorem using a  structure theorem of Host--Kra type for ergodic $\Z^\omega$-actions of order $2$ on probability spaces established recently by Shalom and the authors. 
\end{abstract}
\end{frontmatter}


\section{Introduction}

In this paper we investigate the inverse theory for the third Gowers norm $U^3(G)$ on arbitrary finite abelian groups $G = (G,+)$, with a particular focus on results that apply uniformly for \emph{all} such groups $G$ without additional hypotheses.

\subsection{Gowers uniformity norms}

We first recall the definition of the Gowers uniformity norms, first introduced in \cite{gowers2} to obtain a new proof of Szemer\'edi's theorem \cite{szemeredi1975sets} on arithmetic progressions.  Throughout this paper we use the usual averaging notation
$$ \E_{x \in A} f(x) \coloneqq \frac{1}{|A|} \sum_{x \in A} f(x)$$
for a function $f \colon A \to \C$ defined on a finite non-empty set $A$ of some cardinality $|A|$.

\begin{definition}[Uniformity norms]  Let $G = (G,+)$ be a finite additive\footnote{In this paper all additive groups are assumed to be abelian; multiplicative groups $G = (G,\cdot)$ are not required to be abelian.} group.
\begin{itemize}
\item[(i)]  If $d \geq 0$ and $(f_\omega)_{\omega \in \{0,1\}^d}$ is a tuple of functions $f_\omega \colon G \to \C$, we define the \emph{Gowers inner product} $\langle (f_\omega)_{\omega \in \{0,1\}^d} \rangle_{U^d(G)}$ by the formula
\begin{equation}\label{unif-def}
\langle (f_\omega)_{\omega \in \{0,1\}^d} \rangle_{U^d(G)} \coloneqq \E_{x,h_1,\dots,h_d \in G} \prod_{\omega \in \{0,1\}^d} {\mathcal C}^{|\omega|} f_\omega(x + \omega \cdot (h_1,\dots,h_d))
\end{equation}
where $\omega = (\omega_1,\dots,\omega_d)$, $|\omega| \coloneqq \omega_1+\dots+\omega_d$, ${\mathcal C} \colon z \mapsto \overline{z}$ is the complex conjugation map, and we use the dot product notation
$$ (n_1,\dots,n_d) \cdot (v_1,\dots,v_d) \coloneqq n_1 v_1 + \dots + n_d v_d$$
whenever $n_1,\dots,n_d$ are integers and $v_1,\dots,v_d$ lie in an additive group.
\item[(ii)] If $d \geq 1$ and $f \colon G \to \C$ is a function, the \emph{Gowers uniformity norm} $\|f\|_{U^d(G)}$ is defined by the formula
$$ \|f\|_{U^d(G)} \coloneqq \langle (f)_{\omega \in \{0,1\}^d} \rangle_{U^d(G)}^{1/2^d}.$$
\end{itemize}
\end{definition}

\begin{example} One has
\begin{equation}\label{u1-form}
\begin{split}
\|f\|_{U^1(G)} &= \left| \E_{x,h \in G} f(x) \overline{f}(x+h) \right|^{1/2} \\
&= |\E_{x \in G} f(x)|
\end{split}
\end{equation}
and
$$
\|f\|_{U^2(G)} = \left| \E_{x,h_1,h_2 \in G} f(x) \overline{f}(x+h_1) \overline{f}(x+h_2) f(x+h_1+h_2) \right|^{1/4}. $$
We also have the recursive formula
\begin{equation}\label{gowers-recurse}
\|f\|_{U^{d+1}(G)} = \left( \E_{h \in G} \| \Delta^\times_h f \|_{U^d(G)}^{2^d} \right)^{1/2^{d+1}}
\end{equation}
for any $d \geq 1$, where $\Delta^\times_h f(x) \coloneqq f(x) \overline{f(x-h)}$ denotes the ``multiplicative derivative'' of $f$ in the direction $h$.  In a similar spirit, we observe the recursive formula
\begin{equation}\label{inner-recurse}
\langle (f_\omega)_{\omega \in \{0,1\}^{d+1}} \rangle_{U^{d+1}(G)} = \E_{h^0,h^1 \in G}
\langle (f_{\omega,0}(\cdot-h^0) \overline{f_{\omega,1}}(\cdot-h^1))_{\omega \in \{0,1\}^d} \rangle_{U^d(G)}
\end{equation}
which follows after expanding out both sides and applying a change of variables.
\end{example}

One can verify that $\|\cdot \|_{U^d(G)}$ is a semi-norm for $d=1$ and a norm for $d > 1$; see for instance \cite[\S 11.1]{tao-vu} for the basic properties of these norms.  Our focus in this paper will primarily be on the $U^3(G)$ norm.

If a function $f \colon G \to \C$ is \emph{$1$-bounded} in the sense that $|f(x)| \leq 1$ for all $x \in G$, then clearly we have
$$ \|f\|_{U^{s+1}(G)} \leq 1$$
for any $s \geq 1$.  For many applications to additive combinatorics, it is of interest to establish an \emph{inverse theorem} that describes when a converse inequality
$$ \|f\|_{U^{s+1}(G)} \geq \eta$$
holds for some $0 < \eta \leq 1$.  Here one is primarily interested in the regime where $\eta$ is fixed, but the order $|G|$ of the group can be extremely large. The inverse theorem for $U^{s+1}(G)$ is a foundational result in ``degree $s$ Fourier analysis''; see for instance \cite{green-quadratic}, \cite[\S 11]{tao-vu} for further discussion.  

In the case $s=1$ of linear Fourier analysis, the inverse theorem is easy to state and prove.  Recall that the \emph{Pontragyin dual group} $\hat G$ of a finite additive group is the collection of all additive homomorphisms $\xi \colon G \to \R/\Z$ from $G$ to $\R/\Z$; we write $\xi \cdot x$ for $\xi(x)$, and refer to elements $\xi$ of $\hat G$ as \emph{frequencies}.

\begin{theorem}[Inverse theorem for $U^2(G)$]\label{inverse-u2}   Let $G$ be a finite additive group, let $0 < \eta \leq 1$, and let $f \colon G \to \C$ be a $1$-bounded function such that $\|f\|_{U^2(G)} \geq \eta$.  Then there exists $\xi \in \hat G$ such that
$$ |\E_{x \in G} f(x) e(-\xi \cdot x)| \geq \eta^2$$
where $e \colon \R/\Z \to \C$ is the standard character $e(\theta) \coloneqq e^{2\pi i \theta}$.
\end{theorem}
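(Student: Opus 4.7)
The plan is to prove this by direct Fourier expansion. The key identity is that the $U^2$ norm can be expressed cleanly in terms of Fourier coefficients, and then a combination of Parseval's identity and an $\ell^\infty$-against-$\ell^2$ bound extracts a large frequency.

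First I would introduce the Fourier transform $\hat f(\xi) \coloneqq \E_{x \in G} f(x) e(-\xi \cdot x)$ for $\xi \in \hat G$, recall the inversion formula $f(x) = \sum_{\xi \in \hat G} \hat f(\xi) e(\xi \cdot x)$, and Parseval's identity $\sum_{\xi \in \hat G} |\hat f(\xi)|^2 = \E_{x \in G} |f(x)|^2$. The central computation is to expand
\[
\|f\|_{U^2(G)}^4 = \E_{x,h_1,h_2 \in G} f(x) \overline{f(x+h_1)}\, \overline{f(x+h_2)} f(x+h_1+h_2)
\]
by substituting the Fourier inversion formula for each of the four copies of $f$ and averaging over $x,h_1,h_2$. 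The orthogonality relations for characters collapse the resulting sum over four frequencies $\xi_0,\xi_1,\xi_2,\xi_3$ to the diagonal $\xi_0 = \xi_1 = \xi_2 = \xi_3$, yielding the clean identity
\[
\|f\|_{U^2(G)}^4 = \sum_{\xi \in \hat G} |\hat f(\xi)|^4.
\]

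With this identity in hand, the rest is immediate. By hypothesis the right-hand side is at least $\eta^4$. I would then bound
\[
\sum_{\xi \in \hat G} |\hat f(\xi)|^4 \leq \Bigl(\sup_{\xi \in \hat G} |\hat f(\xi)|^2\Bigr) \sum_{\xi \in \hat G} |\hat f(\xi)|^2 \leq \sup_{\xi \in \hat G} |\hat f(\xi)|^2,
\]
using Parseval together with the $1$-boundedness assumption $\E_{x \in G} |f(x)|^2 \leq 1$. Combining these inequalities gives $\sup_{\xi \in \hat G} |\hat f(\xi)|^2 \geq \eta^4$, so some frequency $\xi$ satisfies $|\hat f(\xi)| = |\E_{x \in G} f(x) e(-\xi \cdot x)| \geq \eta^2$, as claimed.

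There is no real obstacle here: the proof is essentially a bookkeeping exercise in Fourier analysis on finite abelian groups. The only step requiring a small amount of care is the verification of the identity $\|f\|_{U^2(G)}^4 = \sum_\xi |\hat f(\xi)|^4$, but this follows routinely from expanding out and invoking character orthogonality $\E_{x \in G} e(\xi \cdot x) = \mathbf{1}_{\xi = 0}$ three times (once each for the averaging in $x$, $h_1$, and $h_2$). The sharpness of the exponent $\eta^2$ (versus the trivial $\eta^4$) is what the $\ell^2$-boundedness from Parseval buys us, and this is why the bound is stated in terms of $\eta^2$ rather than $\eta^4$.
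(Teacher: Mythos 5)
Your proof is correct and is exactly the standard Fourier-analytic argument: derive the identity $\|f\|_{U^2(G)}^4 = \sum_\xi |\hat f(\xi)|^4$ by expansion and orthogonality, then combine with Parseval and $1$-boundedness to pigeonhole a large Fourier coefficient. The paper simply cites \cite[Proposition 2.2]{green-tao-inverseu3} for this, and the argument there is the same one you give.
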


\begin{proof} See e.g., \cite[Proposition 2.2]{green-tao-inverseu3}.
\end{proof}

\subsection{$U^3(G)$ inverse theorem via quadratic Fourier analysis}

The inverse theorems in the higher order case $s>1$ are harder to describe.  In the $s=2$ case, one way to formulate an inverse theorem (following \cite[Theorem 2.7]{green-tao-inverseu3}) is to replace the linear phases $x \mapsto e(\xi \cdot x)$ in Theorem \ref{inverse-u2} by ``locally quadratic phase functions''.  To describe this formulation more precisely, we need to recall some standard definitions.

\begin{definition}[Bohr sets] (See e.g., \cite[\S 4.4]{tao-vu}) Let $G$ be a finite additive group.
\begin{itemize}
\item[(i)]  If $S \subset \hat G$ is a set of frequencies, we define a ``seminorm'' $\| \|_S \colon G \to [0,1/2]$ on $G$ by
$$ \| x\|_S \coloneqq \sup_{\xi \in S} \| \xi \cdot x\|_{\R/\Z}$$
where $\|\theta\|_{\R/\Z}$ denotes the distance to the nearest integer.  The \emph{Bohr set} $\Bohr(S,\rho) \subset G$ is then defined as
$$ \Bohr(S,\rho) \coloneqq \{ x \in G: \|x\|_S < \rho \}.$$
\item[(ii)]  A Bohr set $\Bohr(S,\rho)$ is \emph{regular} if one has
$$ (1 - 100 |S| |\kappa|) |\Bohr(S,\rho)| \leq |\Bohr(S,(1+\kappa) \rho)| \leq (1 + 100 |S| |\kappa|) |\Bohr(S,\rho)|$$
whenever $\kappa \in [-\frac{1}{100|S|}, \frac{1}{100|S|}]$.
\end{itemize}
\end{definition}

\begin{definition}[Locally quadratic functions]  Let $U$ be a subset of an additive group $G$, and let $H$ be another additive group.
\begin{itemize}
\item[(i)]  If $\phi \colon U \to H$ is a map from $U$, and $h \in G$, the derivative $\aderiv_h \phi \colon U \cap (U+h) \to H$ is defined by $\phi(x) \coloneqq \phi(x) - \phi(x-h)$.  (For some values of $h$, $\aderiv_h \phi$ may be the empty function.)
\item[(ii)]  A map $\phi \colon U \to H$ is said to be \emph{locally linear} (resp. \emph{locally quadratic}) on $U$ if the second derivatives $\aderiv_h \aderiv_k \phi$ (resp. the third derivatives $\aderiv_h \aderiv_k \aderiv_l \phi$) vanish identically for all $h,k,l \in G$.  (Here we adopt the convention that the empty function vanishes identically.)
\end{itemize}
\end{definition}

The first main result of our paper is the following inverse theorem for the $U^3(G)$ norm.  Our conventions for asymptotic notation are laid out in Section \ref{notation-sec} below.

\begin{theorem}[Inverse theorem for $U^3(G)$, locally quadratic version]\label{inverse-u3} Let $G$ be a finite additive group, let $0 < \eta \leq 1/2$, and let $f \colon G \to \C$ be a $1$-bounded function with $\|f\|_{U^3(G)} \geq \eta$.  Then there exists a regular Bohr set $B(S,\rho)$ with $|S| \ll \eta^{-O(1)}$ and $\exp(-\eta^{-O(1)}) \ll \rho \leq 1/100$, a locally quadratic function $\phi \colon \Bohr(S,\rho) \to \R/\Z$, and a function $\xi \colon G \to \hat G$ such that
\begin{equation}\label{exh}
\E_{x \in G} |\E_{h \in \Bohr(S,\rho)} f(x+h) e(-\phi(h)-\xi(x) \cdot h)| \gg \eta^{O(1)}.
\end{equation}
\end{theorem}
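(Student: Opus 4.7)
The plan is to follow the Fourier-analytic template of Green and the second author for odd-order groups, adapting the final ``integration'' step to handle arbitrary abelian $G$, in particular those with 2-torsion. Starting from the recursive formula \eqref{gowers-recurse}, the identity $\|f\|_{U^3(G)}^8 = \E_{h \in G} \|\Delta^\times_h f\|_{U^2(G)}^4 \geq \eta^8$ together with Markov's inequality gives a set $H \subseteq G$ of density $\gg \eta^{O(1)}$ on which $\|\Delta^\times_h f\|_{U^2(G)} \gg \eta^{O(1)}$. For each such $h$, Theorem~\ref{inverse-u2} furnishes a frequency $\zeta(h) \in \hat{G}$ with $|\E_{x \in G} f(x) \overline{f(x-h)} e(-\zeta(h) \cdot x)| \gg \eta^{O(1)}$.

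The heart of the argument is to extract additive structure for the map $h \mapsto \zeta(h)$. Squaring this Fourier coefficient, averaging over $h \in H$, and applying Cauchy-Schwarz a few more times in $x$, one obtains after expansion that the number of additive quadruples $(h_1,h_2,h_3,h_4) \in H^4$ with $h_1+h_2 = h_3+h_4$ and $\zeta(h_1)+\zeta(h_2) = \zeta(h_3)+\zeta(h_4)$ is $\gg \eta^{O(1)} |G|^3$; equivalently, the graph $\Gamma = \{(h,\zeta(h)) : h \in H\} \subseteq G \times \hat G$ has many additive quadruples. By the Balog--Szemer\'edi--Gowers theorem together with Pl\"unnecke--Ruzsa, we extract a large subset $H' \subseteq H$ on which $\zeta$ acts as a Freiman homomorphism of bounded order. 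A Bogolyubov/Sanders-type argument, valid in an arbitrary finite abelian setting, then produces a regular Bohr set $\Bohr(S,\rho)$ with $|S| \ll \eta^{-O(1)}$ and $\rho \gg \exp(-\eta^{-O(1)})$, together with a locally linear map $L \colon \Bohr(S,\rho) \to \hat G$ and a function $\xi \colon G \to \hat G$ such that the relation $\zeta(x+h) \approx \xi(x) + L(h)$ holds for many pairs $(x,h) \in G \times \Bohr(S,\rho)$.

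Substituting this approximation back into the Fourier coefficient and rearranging yields the desired estimate \eqref{exh}, \emph{provided} the locally linear $L$ admits a locally quadratic primitive $\phi \colon \Bohr(S,\rho) \to \R/\Z$ whose derivatives $\aderiv_h \phi$ reproduce the action of $L(h)$. This integration step is the main obstacle. In the odd-order or $\F_2^n$ settings one has either a factor $\tfrac{1}{2}$ available or trivial 2-torsion, permitting a clean explicit formula for $\phi$ in terms of the symmetric bi-additive form associated to $L$; in the general setting neither is, so one must split off the 2-torsion portion of $S$ and build $\phi$ by a direct combinatorial integration along a basis of $S$, exploiting the symmetry of the bi-additive form to close the relevant cocycle. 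Verifying that the resulting function is genuinely locally quadratic on $\Bohr(S,\rho)$---equivalently, that the associated $2$-cocycle is a coboundary---is where I expect the bulk of the 2-torsion-specific technical work to lie.
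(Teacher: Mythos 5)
Your outline of the Fourier-analytic scaffolding (recursion to $U^2$, extracting $\zeta(h)$, counting additive quadruples via Cauchy--Schwarz, Balog--Szemer\'edi--Gowers plus Pl\"unnecke and a Bogolyubov-type step to reach a Bohr set with a locally linear $M$, then the symmetry argument to replace $(h,x)\mapsto Mh\cdot x$ by a symmetric locally bilinear form $B$) matches the paper's route up through Lemma \ref{part-3}. You have also correctly isolated the genuine sticking point: in the general finite abelian setting one must find a locally quadratic primitive $\phi$ of a symmetric locally bilinear form on $\Bohr(S,\rho)$, and neither the division-by-two trick nor a canonical $\F_2$-basis is available.

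The gap is in what you propose to do about it. You say one should ``split off the 2-torsion portion of $S$'' and ``build $\phi$ by a direct combinatorial integration along a basis of $S$,'' but this is not a workable plan as stated: $S$ is a finite set of characters, not a group, and the Bohr set $\Bohr(S,\rho)$ in a general finite abelian $G$ has no canonical basis along which to integrate. The cocycle $c(h,k)=B(h,k)$ that you want to exhibit as a coboundary only satisfies the cocycle identity for triples $(h,k,l)$ staying inside a shrunken Bohr set, and no amount of rearranging the 2-torsion of $G$ by itself closes that cocycle. The paper's new device, which your proposal does not reproduce, is Lemma \ref{ext}: lift the locally bilinear form from $\Bohr(S,\rho)$ to a \emph{globally} bilinear form on the extension group $G_S$ (fitting into $0 \to \Z^S \to G_S \to G \to 0$), using the discrete John theorem to choose a genuine lattice basis near the origin. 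Once the form is globally bilinear on a finitely generated abelian group, the integration problem (Lemma \ref{integ}) factors through the structure theorem $G_S \cong \prod \Z \times \prod \Z/N_i\Z$, and on a cyclic factor $\Z/N\Z$ with $B(x,y)=axy/N \bmod 1$ there is an explicit primitive
\[
\phi(x \bmod N) = \frac{a\binom{x}{2}}{N} - \frac{ax\binom{N}{2}}{N^{2}} \bmod 1,
\]
whose $N$-periodicity is verified directly from $\binom{x+N}{2}=\binom{x}{2}+\binom{N}{2}+Nx$ and holds whether $N$ is even or odd. Without the lift to $G_S$ the problem never reduces to a product of cyclic groups where such a formula applies, and without the explicit $\binom{x}{2}$-style antiderivative the 2-torsion obstruction you flag is not actually resolved. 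You would need both ingredients to complete the argument.
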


In the case where $|G|$ is odd, this theorem was essentially established in \cite[Theorem 2.7]{green-tao-inverseu3}; when instead $G$ is of the form $G = \F_2^n$ for some $n$, this result was essentially established in \cite[Theorem 2.3]{samorodnitsky}.  In both cases the result was established by the methods of ``quadratic Fourier analysis'' \cite{green-quadratic}, which in particular utilised tools from additive combinatorics such as the Balog--Szemer\'edi--Gowers theorem.  In these arguments, the restrictions on $G$ arise at a key step when one wishes to express a certain symmetric (locally) bilinear form in terms of a (locally) quadratic phase function, and this is achieved either by taking advantage of the ability to divide by two (in the case when $|G|$ is odd) or by working with an explicit basis for $G$ (in the case when $G = \F_2^n$).  In Section \ref{Fourier-sec} we establish Theorem \ref{inverse-u3} in full generality.  The proof uses the same quadratic Fourier analysis methods employed in \cite{green-tao-inverseu3}, \cite{samorodnitsky}; to achieve the key step for arbitrary $G$, we introduce a lifting lemma that converts a locally bilinear form on $G$ to a globally bilinear form on a larger group $G_S$, and then works using an explicit basis in the latter group to describe the latter bilinear form in terms of a (globally) quadratic form, which then descends to give the locally quadratic function $\phi$ in the above theorem.

\subsection{Nilmanifold formulation}

It is now generally accepted that the most natural formulation of $U^{s+1}(G)$ inverse theorems for higher values of $s$ involves nilmanifolds and related objects. In order to state such a formulation, we recall some key definitions.

\begin{definition}[Host-Kra spaces]  (See e.g., \cite[Appendix B]{gtz}.)
\begin{itemize}
\item[(i)]  A \emph{prefiltration} on a group $G = (G,\cdot)$ is a collection of subgroups $G_\bullet = (G_i)_{i=0}^\infty$ of $G$ such that $G_0 \geq G_1 \geq \dots$ and $[G_i,G_j] \subset G_{i+j}$ for all $i,j \geq 0$.  We say that the prefiltration has \emph{degree at most $s$} if $G_i$ is trivial for $i>s$.  We refer to the pair $(G,G_\bullet)$ as a \emph{prefiltered group}.  If $G_0=G_1=G$, we say that the prefiltration is a \emph{filtration}, and that $(G,G_\bullet)$ is a \emph{filtered group}.  Similarly if the group $G$ is written additively instead of multiplicatively.  Note that filtered groups of degree at most $s$ are automatically nilpotent of nilpotency step at most $s$.  If all the groups $G_i$ in a filtered group are locally compact, we call $G$ a \emph{filtered locally compact group}.  A subgroup $\Gamma$ of a filtered locally compact group $G$ is a \emph{filtered lattice} if $\Gamma$ is discrete and $\Gamma_i \coloneqq \Gamma \cap G_i$ is a cocompact subgroup of $G_i$ for all $i$; thus $(\Gamma, (\Gamma_i)_{i=0}^\infty)$ is a filtered discrete group.
\item[(ii)]  If $G = (G,+)$ is an additive group, we define the \emph{abelian filtration} on $G$ by setting $G_0 = G_1 \coloneqq G$ and $G_i = \{0\}$ for $i>1$.
\item[(iii)]  If $G = (G,G_\bullet)$ is a prefiltered group, and $k \geq 0$, we define the \emph{Host-Kra group} $\HK^k(G) = \HK^k(G,G_\bullet)$ to be the subgroup of $G^{\{0,1\}^k}$ generated by the elements
\begin{equation}\label{go}
[g_{\omega_0}]_{\omega_0} \coloneqq \left(g_{\omega_0}^{1_{\omega \geq \omega_0}}\right)_{\omega \in \{0,1\}^k}
\end{equation}
for $\omega_0$ in $\{0,1\}^k$ and $g_{\omega_0} \in G_{|\omega_0|}$, where $g_{\omega_0}^{1_{\omega \geq \omega_0}}$ is defined to equal $g_{\omega_0}$ when $\omega \geq \omega_0$ (in the product order on $\{0,1\}^k$) and the identity $1$ otherwise.
\item[(iv)]  If $G = (G,G_\bullet)$ is a prefiltered group, $k \geq 0$, and $\Gamma$ is a subgroup of $G$, we define the \emph{Host-Kra space} $\HK^k(G/\Gamma)$ to be the set
$$ \HK^k(G/\Gamma) \coloneqq \pi^{\{0,1\}^k}( \HK^k(G) ) \subset (G/\Gamma)^{\{0,1\}^k},$$
where $\pi \colon G \to G/\Gamma$ is the quotient map and $\pi^{\{0,1\}^k} \colon G^{\{0,1\}^k} \to (G/\Gamma)^{\{0,1\}^k}$ is the map defined by pointwise evaluation of $\pi$, thus
$$ \pi^{\{0,1\}^k}( (h_\omega)_{\omega \in \{0,1\}^k} ) \coloneqq (\pi(h_\omega))_{\omega \in \{0,1\}^k}.$$
Note that we have a canonical identification
$$ \HK^k(G/\Gamma) \equiv \HK^k(G) / \HK^k(\Gamma)$$
and also
$$ \HK^k(\Gamma) = \HK^k(G) \cap \Gamma^{\{0,1\}^k}.$$
\item[(v)]  If $G = (G,G_\bullet)$ and $H = (H,H_\bullet)$ are prefiltered groups, and $\Gamma$, $\Lambda$ are subgroups of $G,H$ respectively, a \emph{nilspace morphism}\footnote{For a systematic treatment of nilspaces, of which the filtered groups $G$ and quotients $H/\Lambda$ are special cases, see \cite{candela0}.  However, we will not require the general theory of nilspaces here, as we will work only with filtered groups and their quotients.} $\phi \colon G/\Gamma \to H/\Lambda$ is a function such that for every $k \geq 0$, the map $\phi^{\{0,1\}^k} \colon (G/\Gamma)^{\{0,1\}^k} \to (H/\Lambda)^{\{0,1\}^k}$ defined by the pointwise application of $\phi$, thus
$$ \phi^{\{0,1\}^k}( (x_\omega)_{\omega \in \{0,1\}^k} ) \coloneqq (\phi(x_\omega))_{\omega \in \{0,1\}^k}$$
maps $\HK^k(G/\Gamma)$ to $\HK^k(H/\Lambda)$.  A nilspace morphism from a prefiltered group $G$ to a quotient $H/\Lambda$ will be called a \emph{polynomial map}.  (For instance, the projection $\pi \colon H \to H/\Lambda$ from (iv) is a polynomial map.)
\end{itemize}
\end{definition}

For future reference we note that if $G = (G,+)$ has the abelian filtration and $H = (H,\cdot)$ is a filtered group, then a map $\phi \colon G \to H$ is a polynomial map if and only if one has the relations
\begin{equation}\label{relate}
\aderiv_{h_1} \dots \aderiv_{h_k} \phi(x) \in H_k
\end{equation}
for all $k \geq 0$ and $x,h_1,\dots,h_k \in G$, where the difference operators $\aderiv_h$ are defined as
$$ \aderiv_h \phi(x) \coloneqq \phi(x) \phi(x-h)^{-1};$$
see for instance \cite[Theorem B.3]{gtz}.

\begin{definition}[Nilmanifolds]\cite{green-tao-nilmanifolds}  Let $s \geq 1$. A \emph{filtered nilmanifold} of degree at most $s$ is a quotient $H/\Gamma$, where $H = (H,H_\bullet)$ is a filtered Lie group of degree at most $s$, and $\Gamma=(\Gamma,\Gamma_\bullet)$ is a filtered lattice in $H$.  A \emph{prefiltered nilmanifold} is defined similarly, but with all filtrations replaced by prefiltrations.
\end{definition}

It is not difficult to see that a filtered nilmanifold $H/\Gamma$ has the structure of a smooth compact manifold (and the Host--Kra spaces $\HK^k(H/\Gamma)$ are also smooth compact manifolds); see for instance \cite[\S 2]{gmv}.  On the other hand, we stress that we do \emph{not} require a nilmanifold to be connected.  Nilsequences $x \mapsto F(g(x))$ arising from prefiltered nilmanifolds, with $g \colon G \to H/\Gamma$ a polynomial map and $F \colon H/\Gamma \to \C$ a Lipschitz map can be converted to nilsequences on a filtered nilmanifold by observing that $g(x) \in H_1 g(0)$ for all $x \in G$, so if one lets $\{g(0)\} \in H$ be a bounded element of $H$ that maps $g(0)$ to the origin, then $g(x) = \{g(0)\} \tilde g(x)$ for some polynomial map $\tilde g \colon G \to H_1 / (\Gamma \cap H_1)$ into the filtered nilmanifold $H_1/(\Gamma \cap H_1)$.  One can then write $F(g(x)) = \tilde F(\tilde g(x))$ where $\tilde F \colon H_1/(\Gamma \cap H_1) \to \C$ is the Lipschitz map $\tilde F(z) \coloneqq F(\{g(0)\} z)$.

For the $U^3$ inverse theory, we will work with the following specific construction of a degree $2$ (pre-)filtered nilmanifold, which one can view as a variant of the Heisenberg nilmanifold:

\begin{definition}[Specific nilmanifold construction]\label{spef} Let $N \geq 0$ be a natural number.  For any $d \geq 0$, we let $\Poly_{\leq d}(\R^N \to \R)$ denote the vector space of real valued polynomials on $\R^N$ of degree at most $d$.  These spaces have a translation action of $\R^N$, and in particular we can form the semidirect product
$$ H(\R^N) \coloneqq \R^N \ltimes \Poly_{\leq 2}(\R^N \to \R)$$
which is the space of pairs $(x, \phi)$ with $x \in \R^N$ and $\phi \colon \R^N \to \R$ a quadratic polynomial, with group law
$$ (x,\phi) (y,\psi) \coloneqq (x+y, T^y \phi + \psi)$$
where $T^x$ denotes the translation operation $T^x \psi(y) \coloneqq \psi(y+x)$.  This is a nilpotent Lie group (of nilpotency class three), and can be given the degree $2$ prefiltration
\begin{align*}
 H(\R^N)_0 &\coloneqq \R^N \ltimes \Poly_{\leq 2}(\R^N \to \R) \\
 H(\R^N)_1 &\coloneqq \R^N \ltimes \Poly_{\leq 1}(\R^N \to \R) \\
 H(\R^N)_2 &\coloneqq 0 \ltimes \Poly_{\leq 0}(\R^N \to \R) \\
 H(\R^N)_i &\coloneqq 0 \hbox{ for all } i > 2.
 \end{align*}
Inside $H(\R^N)$ we have a filtered lattice
$$ H(\Z^N) \coloneqq \Z^N \ltimes \Poly_{\leq 2}(\Z^N \to \Z)$$
defined and filtered in the analogous fashion (replacing all occurrences of $\R$ with $\Z$), using interpolation to identify $ \Poly_{\leq 2}(\Z^N \to \Z)$ with a subgroup of $\Poly_{\leq 2}(\R^N \to \R)$.  Thus $H(\R^N)/H(\Z^N)$ is a prefiltered nilmanifold (of dimension $\frac{N^2+5N+2}{2}$).  We can give it a (somewhat artificial) metric by identifying the Lie algebra of $H(\R^N)$ (as a vector space) with $\R^N \times \Poly_{\leq 2}(\R^N \to \R)$ and using the standard basis of $\R^N$ and the monomial basis of $\Poly_{\leq 2}(\R^N \to \R)$ as an orthonormal basis for this Lie algebra, which then defines a right-invariant Riemannian metric on $H(\R^N)$, and hence a Riemannian metric on $H(\R^N)/H(\Z^N)$.
\end{definition}

In Section \ref{tonil-sec} we will use Theorem \ref{inverse-u3} (or more precisely a slight strengthening of this result that involves a globally quadratic function on a lift of $G$) to establish the following inverse theorem, whose formulation is inspired by the recent work of Candela and Szegedy \cite{candela-szegedy-inverse} (which we will discuss shortly):

\begin{theorem}[Inverse theorem for $U^3(G)$, nilmanifold version]\label{inverse-u3-nil} Let $G$ be a finite additive group, let $0 < \eta \leq 1/2$, and let $f \colon G \to \C$ be a $1$-bounded function with $\|f\|_{U^3(G)} \geq \eta$.  
Then there exists a natural number $N \ll \eta^{-O(1)}$, a polynomial map $g \colon G \to H(\R^N)/H(\Z^N)$, and a Lipschitz function $F: H(\R^N) / H(\Z^N) \to \C$ of norm $O(\exp(\eta^{-O(1)}))$ such that
\begin{equation}\label{exf}
 |\E_{x \in G} f(x) \overline{F(g(x))}| \gg \exp(-\eta^{-O(1)}).
 \end{equation}
\end{theorem}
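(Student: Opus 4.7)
The plan is to take the locally quadratic phase correlation provided by Theorem \ref{inverse-u3} and repackage it as a nilsequence correlation on $H(\R^N)/H(\Z^N)$. First, I apply Theorem \ref{inverse-u3} to obtain a regular Bohr set $B \coloneqq \Bohr(S,\rho)$ with $N \coloneqq |S| \ll \eta^{-O(1)}$ and $\rho \gg \exp(-\eta^{-O(1)})$, a locally quadratic function $\phi \colon B \to \R/\Z$, and a frequency-valued function $\xi \colon G \to \hat G$ for which \eqref{exh} holds. Writing $S = \{\xi_1, \ldots, \xi_N\}$, I introduce the linear map $\pi \colon G \to (\R/\Z)^N$ given by $\pi(x) \coloneqq (\xi_1 \cdot x, \ldots, \xi_N \cdot x)$, so that $B$ is essentially the preimage under $\pi$ of the cube of sidelength $\rho$ about the origin.

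Next, I appeal to the strengthened form of Theorem \ref{inverse-u3} mentioned in the paragraph immediately preceding its statement, which promotes the locally quadratic $\phi$ to a \emph{globally} quadratic polynomial $Q \colon \R^N \to \R$ agreeing modulo $1$ with $\phi$ along an integer lift of $B$. The function $\xi$ is not in general additive, but the symmetry of the locally bilinear form $\aderiv_h \aderiv_k \phi$ associated to $\phi$ forces $\xi$ to obey an approximate linearity relation compatible with $Q$; consequently one may express $\xi(x) \cdot h$ in the form of a bilinear pairing $B(\pi(x), \pi(h))$ modulo $1$, and the joint phase $\phi(h) + \xi(x) \cdot h$ in \eqref{exh} becomes (after lifting) a genuine quadratic polynomial in the pair $(x,h)$.

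With this structure in hand, I define the polynomial map $g \colon G \to H(\R^N)/H(\Z^N)$ by $g(x) \coloneqq (\pi(x), P_x)$, where $P_x \in \Poly_{\leq 2}(\R^N \to \R)$ is the quadratic polynomial whose quadratic part is the fixed form $Q$ and whose subleading terms depend linearly on $x$ in such a way as to encode the phase $\phi(\cdot) + \xi(x)\cdot$. The prefiltration of Definition \ref{spef} is set up precisely so that this $g$ is a polynomial map: since the quadratic part of $P_x$ is independent of $x$, the first derivative $\aderiv_h g$ lies in $H(\R^N)_1 = \R^N \ltimes \Poly_{\leq 1}$, and since the linear part depends linearly on $x$, the second derivative $\aderiv_h \aderiv_k g$ lies in the central stratum $H(\R^N)_2 = \Poly_{\leq 0}$, confirming \eqref{relate}. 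I then define $F \colon H(\R^N)/H(\Z^N) \to \C$ by $F(y, \psi) \coloneqq \chi(y) \, e(-\psi(0))$, where $\chi \colon (\R/\Z)^N \to [0,1]$ is a smooth Lipschitz bump approximating (after normalization) the indicator of $\pi(B)$. Averaging $F(g(x+h))$ over $h$ and matching against \eqref{exh} after the change of variables $h \mapsto h-x$ then transfers \eqref{exh} to \eqref{exf}.

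The principal obstacle is the middle step: upgrading the local bilinear form underlying $\phi$ to a genuine bilinear form on $\R^N \times \R^N$, and simultaneously fitting the not-quite-additive function $\xi$ into the nilpotent structure of $H(\R^N)$ so that $g$ is truly a polynomial map. Once $g$ is constructed, the remainder is bookkeeping: the Lipschitz norm of $F$ blows up as $\rho^{-O(N)} = \exp(\eta^{-O(1)})$ because of the steepness of $\chi$ across the boundary of the Bohr cube, which is the source of the exponential constant in \eqref{exf}, while the regularity of $B$ guaranteed by Theorem \ref{inverse-u3} is exactly what is needed to control the smoothing error incurred when replacing $1_B$ by $\chi$.
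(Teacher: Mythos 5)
Your plan correctly identifies the target nilmanifold $H(\R^N)/H(\Z^N)$ with $N=|S|$ and the overall strategy of lifting the locally quadratic phase via Theorem \ref{inverse-u3-plus}, but there is a genuine gap precisely at the step you yourself flag as the ``principal obstacle.'' The claim that ``symmetry of the locally bilinear form $\aderiv_h\aderiv_k\phi$ forces $\xi$ to obey an approximate linearity relation,'' so that $\xi(x)\cdot h$ can be written as a bilinear pairing $B(\pi(x),\pi(h)) \bmod 1$, is not a consequence of Theorem \ref{inverse-u3}. The function $\xi\colon G \to \hat G$ produced there is extracted pointwise in $x$ (by applying a $U^2$-type lemma, \cite[Lemma 4.4]{green-tao-inverseu3}, separately to each $x'$ at the final step of the proof), and the theorem asserts no additivity or polynomial dependence on $x$ whatsoever; in particular $\xi(x)$ need not factor through the Bohr coordinates $\pi(x)$. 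Without such structure, $P_x$ does not depend polynomially on $x$, $g(x)=(\pi(x),P_x)$ will not satisfy \eqref{relate}, and the argument breaks. The paper avoids the issue entirely: one pigeonholes in $x$ at the very start of Section \ref{tonil-sec}, fixing a single $x_0$ for which $|\E_{h\in\Bohr(S,\rho)}f(x_0+h)e(-\phi(h)-\xi(x_0)\cdot h)|\gg\eta^{O(1)}$, then absorbs the now-\emph{fixed} linear phase $\xi(x_0)\cdot h$ into the locally quadratic $\phi$ (preserving the globally quadratic lift), builds the nilsequence in the single variable $h$, and recovers \eqref{exf} by translating $g$, which stays polynomial.

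A secondary but also fatal issue is that your $F(y,\psi) := \chi(y)e(-\psi(0))$ does not descend to the quotient $H(\R^N)/H(\Z^N)$. For $(m,\lambda)\in H(\Z^N)$, the group law gives $(y,\psi)(m,\lambda) = (y+m, T^m\psi + \lambda)$, hence $F\bigl((y,\psi)(m,\lambda)\bigr) = \chi(y+m)e\bigl(-(T^m\psi+\lambda)(0)\bigr) = \chi(y)e(-\psi(m))$ using $\lambda(0)\in\Z$ and $\Z^N$-periodicity of $\chi$; this is not equal to $\chi(y)e(-\psi(0))$ for general $\psi$ and $m\neq 0$. One must instead sum over the lattice as the paper does, taking $F(\theta,\psi):=\sum_{n\in\Z^N}\varphi(\theta+n)e(\psi(n))$, for which the analogous computation shows right $H(\Z^N)$-invariance. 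A further consequence of working with $\pi\colon G\to(\R/\Z)^N$ rather than with the lift $G_S \leq G\times\R^S$ is that you lose the exact sequence \eqref{exact} and the identification \eqref{bohr}, which are what make the descent of both $g$ and $\tilde\phi$ from $G_S$ to $G$ clean; this is repairable, but the $\xi$-structure claim in the first paragraph is not.
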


By the preceding discussion one can represent the prefiltered nilsequence $F(g(x))$ as a filtered nilsequence.  This theorem is somewhat weaker than Theorem \ref{inverse-u3} in that the correlation in \eqref{exf} is only exponential in $\eta$ instead of polynomial.  With a bit more effort one could create a more complicated ``averaged correlation'' variant of \eqref{exf}  (resembling \eqref{exh}), in which the lower bound in the correlation is polynomial in $\eta$, but we will not do so here.

\subsection{$U^3(G)$ inverse theorem via ergodic theory}

Based on Theorem \ref{inverse-u3}, we now formulate the following conjecture\footnote{One could similarly formulate a conjectural qualitative analogue of Theorem \ref{inverse-u3} to higher values of $s$, but we will not do so here, in part because one would have to decide precisely how to define the notion of a regular higher order Bohr set.} for general $G$ and $s$:

\begin{conjecture}[Inverse theorem for $U^{s+1}(G)$]\label{uk-inverse-nil}  Let $G$ be a finite additive group, let $\eta > 0$, let $s \geq 1$, and let $f \colon G \to \C$ be a $1$-bounded function with $\|f\|_{U^{s+1}(G)} \geq \eta$.  Then there exists a degree $s$ filtered nilmanifold $H/\Gamma$, drawn from some finite collection ${\mathcal N}_{s,\eta}$ of such nilmanifolds that depends only on $s,\eta$ but not on $G$ (and each such nilmanifold in ${\mathcal N}_{s,\eta}$ is endowed arbitrarily with a smooth Riemannian metric), a Lipschitz function $F \colon H/\Gamma \to \C$ of Lipschitz norm $O_{\eta,s}(1)$, and a polynomial map $g \colon G \to H/\Gamma$ such that
\begin{equation}\label{fF}
 |\E_{x \in G} f(x) \overline{F(g(x))}| \gg_{\eta,s} 1.
 \end{equation}
\end{conjecture}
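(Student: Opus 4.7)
My approach would extend the ergodic-theoretic strategy the paper uses for the $s=2$ case. The first step is a \emph{correspondence principle}. Assume for contradiction that the conjecture fails at some fixed $\eta>0$ and $s\geq 1$; then there exist finite abelian groups $G_n$ and $1$-bounded $f_n\colon G_n \to \C$ with $\|f_n\|_{U^{s+1}(G_n)}\geq \eta$, such that for each $n$ the correlation $|\E_{x\in G_n} f_n(x)\overline{F(g(x))}|$ is at most $1/n$ uniformly over all $s$-step filtered nilmanifolds $H/\Gamma$ of dimension at most $n$, all polynomial maps $g\colon G_n\to H/\Gamma$, and all $n$-Lipschitz $F$. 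Passing to a Loeb ultraproduct yields a measure-preserving action of an internal abelian group $\Xb G$ on a probability space and an ultralimit $\Xb f$ whose ergodic $U^{s+1}$-seminorm (defined via averages over $\HK^{s+1}$-cubes) is at least $\eta$, yet which is orthogonal to every finite-dimensional $s$-step filtered nilmanifold factor.

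The second and central step is a \emph{Host--Kra type structure theorem} for such ultraproduct abelian actions, identifying the $U^{s+1}$-characteristic factor as an inverse limit of $s$-step compact filtered nilmanifolds. For $s=1$ this is classical Pontryagin duality; for $s=2$ it is exactly the structure theorem of Shalom and the authors for $\Z^\omega$-actions of order $2$ invoked in the paper. For general $s$ I would try to proceed inductively, assuming the degree-$(s-1)$ factor has been so identified and representing the degree-$s$ factor as an abelian cocycle extension. The cocycle equation to be solved is a Conze--Lesigne type equation of degree $s$; its resolution requires controlling $H^{s+1}(\Xb G,\T)$-valued obstructions for an abelian group $\Xb G$ of arbitrary torsion. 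Once the factor is shown to be an inverse limit of compact nilspaces, a further approximation step promotes these to classical filtered nilmanifolds. Finally, a \emph{descent} argument uses compactness in dimension and Lipschitz norm together with this approximation to extract from the ultraproduct correlation a single nilmanifold $H/\Gamma$ drawn from a countable list, a polynomial $g_n\colon G_n\to H/\Gamma$, and an $O_{\eta,s}(1)$-Lipschitz $F$ giving \eqref{fF} for infinitely many $n$, contradicting the choice of $(G_n,f_n)$.

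The main obstacle is unquestionably the structure theorem in step two. Even in the classical $\Z$-action setting, the degree-$s$ cocycle analysis is delicate and requires substantial input. For ergodic actions of arbitrary abelian groups the resulting cohomology picks up genuinely new contributions from the mixed torsion of $\Xb G$, which is precisely why the $s=2$ case already required machinery going well beyond classical Host--Kra theory. Extending that machinery to $s\geq 3$ appears to demand a theory of $s$-step nilspaces adapted to abelian group actions with mixed torsion which is not yet available in sufficient generality, and this is why the statement is offered only as a conjecture.
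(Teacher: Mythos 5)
Your outline mirrors exactly the ergodic-theoretic route the paper takes for the $s=2$ case (Sections 4--7), and you correctly diagnose why the statement is offered only as a conjecture: the paper itself observes that ``our arguments are in fact valid for any $s$, in that the obvious generalization of Theorem \ref{hk-inverse} to a higher value of $s$ \dots would imply the corresponding case of Conjecture \ref{uk-inverse-nil}.'' The contradiction/compactness setup, the passage to a Loeb ultraproduct, the invocation of a Host--Kra structure theorem, the approximation of the resulting quotient by genuine filtered nilmanifolds (the paper's Corollary \ref{nil-approx}, via the Gleason--Yamabe result for compactly generated nilpotent groups), and the descent back to a single nilmanifold from a countable list (using Mal'cev bases) are all present in the paper.

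Two small corrections to the way you describe the correspondence step. First, the dynamics are not an action of the internal group $\Xb G$ on itself; that would produce a non-separable, non-ergodic system that the structure theory does not apply to. Instead the paper constructs a $\Z^\omega$-action on (a separable factor of) the Loeb space by randomly sampling countably many shifts $\mathbf{g}_j\in G$ and letting $e_j$ act by translation by $\mathbf{g}_j$; ergodicity and the equality of Gowers and Host--Kra inner products are then verified almost surely via a Borel--Cantelli argument exploiting the specific F{\o}lner sequence \eqref{folner}. Second, and more substantively, the paper's assessment of the obstacle is slightly stronger than yours: the authors do not merely say the degree-$s$ structure theorem is unavailable, but that they ``suspect that these higher degree analogues of Theorem \ref{hk-inverse} may be false as stated,'' so the correct ergodic input for $s\geq 3$ may require a genuinely different formulation rather than a straightforward generalization of the Conze--Lesigne theorem. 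With those caveats, your identification of the missing ingredient and your plan for the remaining (established) steps are accurate.
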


There is substantial evidence already towards this conjecture:

\begin{itemize}
\item[(i)]  Theorem \ref{inverse-u3-nil} clearly establishes the $s=2$ case of this conjecture, and Theorem \ref{inverse-u2} similarly establishes the $s=1$ case.
\item[(ii)]  Conjecture \ref{uk-inverse-nil} is known\footnote{In the earlier references an ostensibly weaker version of the conjecture is established in which one works with ``non-periodic nilsequences'' rather than ``periodic nilsequences'', but the two formulations can be shown to be equivalent: see \cite{manners-periodic} for further discussion.} when $G$ is a cyclic group \cite{gtz-4}, \cite{gtz}, \cite{szegedy-inv2}, \cite{szegedy-inv}, \cite{manners-inv}, \cite{candela-szegedy-inverse}.  We remark that this special case has applications to number theory, for instance in obtaining asymptotics for linear equations in primes \cite{gt-linear}.
\item[(iii)] Conjecture \ref{uk-inverse-nil} is known when $G$ is of the form $\F_p^n$ for some finite field $\F_p$ of fixed prime order \cite{tz-finite}, \cite{btz}, \cite{tz-lowchar}, \cite{szegedy-inv2}, \cite{milicevic}, \cite{milicevic-2}, \cite{BSST}, \cite{CGSS}.  In this case one can take the nilmanifold $H/\Gamma$ to be the unit circle $\R/\Z$, or even a cyclic group $\frac{1}{p^j} \Z/\Z$ for some $j$ depending only on $p,s$, equipped with a suitable degree $s$ filtration.
\item[(iv)]  The inverse theorem in \cite[Theorem 1.6]{candela-szegedy-inverse} implies (as a special case) a weaker version of Conjecture \ref{uk-inverse-nil} for all $G,s$, in which the nilmanifold $H/\Gamma$ is replaced by the more general notion of a ``CFR nilspace''.  The main difficulty in passing from this theorem to the full strength version of Conjecture \ref{uk-inverse-nil} is that the CFR nilspaces produced by that theorem need not be connected (or ``toral'') without further hypotheses\footnote{For instance, as remarked previously, when $G = \F_p^n$, the CFR nilspace can be chosen to be isomorphic to the disconnected nilmanifold $\frac{1}{p^j} \Z/\Z$; see for instance \cite{tz-lowchar}.}  on the group $G$.
\item[(v)] In the converse direction, one can adapt standard arguments (e.g., \cite[Proposition 12.6]{green-tao-inverseu3}) to show that if $f \colon G \to \C$ is $1$-bounded and $|\E_{x \in G} f(x) \overline{F(g(x))}| \geq \delta$ for some $\delta>0$, degree $s$ filtered nilmanifold $H/\Gamma$, some Lipschitz function $F \colon H/\Gamma \to \C$ of norm at most $1/\delta$, and some polynomial map $g \colon G \to H/\Gamma$, then $\|f\|_{U^{s+1}(G)} \gg_{\delta,H/\Gamma,s} 1$.   
\end{itemize}

\begin{remark} From item (v) we see that Conjecture \ref{uk-inverse-nil} is an equivalence at the qualitative level.  However, it may be still possible to ``strengthen'' this conjecture by placing additional properties on $H/\Gamma$ and $g$ in certain cases.  For instance, the results in \cite{green-tao-nilmanifolds}, \cite{candela-szegedy-inverse} strongly suggest that the map $g$ can be chosen to obey a suitable ``balance'' or ``equidistribution'' property.  If the group $G$ is the product of boundedly many cyclic factors, one should be able to place additional connectedness or ``toral'' hypotheses on the nilmanifold $H/\Gamma$, in analogy with the analysis in \cite{host2005nonconventional} or \cite{candela-szegedy-inverse}.  In the case $G = \F_p^n$, it was recently established in \cite{CGSS} that the nilmanifold $H/\Gamma$ can be chosen to obey a ``$p$-homogeneity'' property.  We will not pursue the question of what the optimal properties one can place on $H/\Gamma$ or $g$ further in this paper.
\end{remark}

We will not establish this conjecture in full in this paper; however in the $s=2$ case we can establish it via an ergodic theory result (Theorem \ref{hk-inverse} below) that was recently established in \cite{jt21} by Shalom and the authors. In order to state this result, we will need some definitions.

\begin{definition}[$\Z^\omega$-systems]\ 
\begin{itemize}
\item[(i)]  We define the additive group $\Z^\omega$ to be the free abelian group by a countable sequence of generators $e_1,e_2,\dots$. This is an amenable group; it will be convenient to use the specific F{\o}lner sequence $\Phi_1, \Phi_2, \dots \subset \Z^\omega$ defined by
\begin{equation}\label{folner}
\Phi_n \coloneqq \{ a_1 e_1 + \dots + a_{2^n} e_{2^n}: a_1,\dots,a_{2^n} \in \{0,\dots,n\} \}.
\end{equation}
\item[(ii)]  A \emph{$\Z^\omega$-system} $(X,\mu,T)$ is a measure space $(X,\mu) = (X,\X,\mu)$, together with a group homomorphism $T \colon h \mapsto T^h$ from $\Z^\omega$ to the automorphism group $\Aut(X,\mu)$ of $(X,\mu)$ (the group of invertible measure-preserving transformations of $(X,\mu)$).  Note that any sequence $T^{e_1},T^{e_2},\dots$ of commuting automorphisms of $(X,\mu)$ will generate such a system.  For any $f \in L^\infty(X,\mu)$ and $h \in \Z^\omega$, we write $T^h f \coloneqq f \circ T^{-h}$.  A $\Z^\omega$-system is \emph{ergodic} if the invariant factor $L^\infty(X,\mu)^T \coloneqq \bigcap_{h \in \Z^\omega} \{ f \in L^\infty(X,\mu): T^h f = f \}$ consists only of the constants. If the $\sigma$-algebra $\X$ is countably generated modulo null sets, we say that the $\Z^\omega$-system is \emph{separable}.
\item[(iii)]  Let $(X,\mu)$ be a $\Z^\omega$-system, $d \geq 0$, and $(f_\omega)_{\omega \in \{0,1\}^d}$ be a tuple of functions $f_\omega \in L^\infty(X,\mu)$.  For any natural numbers $n_1,\dots,n_d$, we define the local Host--Kra inner product
\begin{equation}\label{bo}
\langle (f_\omega)_{\omega \in \{0,1\}^d} \rangle_{U^d_{n_1,\dots,n_d}(X)}
\coloneqq \E_{h^0_i, h^1_i \in \Phi_{n_i} \forall i=1,\dots,d} \int_X \prod_{\omega \in \{0,1\}^d} {\mathcal C}^{|\omega|} T^{h^{\omega_1}_1 + \dots + h^{\omega_d}_d} f_\omega\ d\mu
\end{equation}
and the global Host--Kra inner product
\begin{equation}\label{bo-global}
 \langle (f_\omega)_{\omega \in \{0,1\}^d} \rangle_{U^d(X)} \coloneqq \lim_{(n_1,\dots,n_d) \to \infty}
\langle (f_\omega)_{\omega \in \{0,1\}^d} \rangle_{U^d_{n_1,\dots,n_d}(X)}
\end{equation}
(for a proof of convergence of this limit and its independence of the choice of F{\o}lner sequence $\Phi_n$, see \cite[Theorem 2.1, Remark 1.14]{tz-concat}.)  For any $f \in L^\infty(X,\mu)$ and $d \geq 1$, we define the \emph{Host--Kra seminorm}
$$ \|f\|_{U^d(X)} \coloneqq \langle (f)_{\omega \in \{0,1\}^d} \rangle_{U^d(X)}^{1/2^d}.$$ 
\item[(iv)]  A \emph{$\Z^\omega$-morphism} $\pi \colon X \to Y$ from one $\Z^\omega$-system $(X,\mu_X,T_X)$ to another $(Y,\mu_Y,T_Y)$ is a measure-preserving transformation $\pi \colon X \to Y$ such that $T_Y^h \circ \pi = \pi \circ T_X^h$ for all $h \in \Z^\omega$.
\end{itemize}
\end{definition}

One can show that $\| \cdot \|_{U^d(X)}$ is indeed a seminorm on $L^\infty(X)$; see for instance \cite[\S 2]{tz-concat}, where it is also shown that these seminorms agree with the ones defined in \cite{host2005nonconventional} using cubic measures.  
From \eqref{bo}, \eqref{bo-global} we observe the identity
\begin{equation}\label{bo-ident}
\langle (f_\omega)_{\omega \in \{0,1\}^{d+1}} \rangle_{U^{d+1}(X)}
= \lim_{n \to \infty} \E_{h^0,h^1 \in \Phi_n}
\langle (T^{h^0} f_{\omega,0} T^{h^1} \overline{f_{\omega,1}})_{\omega \in \{0,1\}^{d}} \rangle_{U^{d}(X)}
\end{equation}
for any $s \geq 0$ and $f_\omega \in L^\infty(X)$, $\omega \in \{0,1\}^{d+1}$; compare with \eqref{inner-recurse}. 

We can now give

\begin{theorem}[Conze--Lesigne inverse theorem for $\Z^\omega$-actions]\label{hk-inverse}  Let $(X,\mu,T)$ be an ergodic separable $\Z^\omega$-system, and let $f \in L^\infty(X,\mu)$ be such that $\|f\|_{U^{3}(X)} > 0$.  Then there exists a filtered locally compact group $H$ of degree at most $2$, a filtered lattice $\Gamma$ of $H$, a translation action $T_{H/\Gamma} \colon \Z^\omega \to \Aut(H/\Gamma)$ (equipping $H/\Gamma$ with Haar measure) defined by
\begin{equation}\label{thh}
T_{H/\Gamma}^h(x) \coloneqq \phi(h) x
\end{equation}
for some group homomorphism $\phi \colon \Z^\omega \to H$, a $\Z^\omega$-morphism $\Pi \colon X \to H/\Gamma$, and a continuous function $F \colon H/\Gamma \to \C$ such that
\begin{equation}\label{fFpi}
 \int_X f(x) \overline{F(\Pi(x))}\ d\mu \neq 0.
 \end{equation}
\end{theorem}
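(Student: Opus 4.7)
The plan is to deduce this theorem from the structure theorem for the degree-$2$ Host--Kra factor of ergodic $\Z^\omega$-systems established in \cite{jt21}, by picking up the desired correlation from the projection of $f$ onto that factor.

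First I would invoke the standard duality between Host--Kra seminorms and Host--Kra factors: there is a $\Z^\omega$-invariant sub-$\sigma$-algebra $\X_{\leq 2}\subset\X$ (the degree-$2$ Host--Kra factor) such that for any $f\in L^\infty(X,\mu)$ one has $\|f\|_{U^3(X)}=0$ if and only if $\E(f\mid \X_{\leq 2})=0$. For $\Z^\omega$-systems this is carried out in \cite{tz-concat}; alternatively it follows by iterating the recursion \eqref{bo-ident} together with a Gowers--Cauchy--Schwarz argument. The hypothesis $\|f\|_{U^3(X)}>0$ therefore produces a nonzero $g\coloneqq \E(f\mid \X_{\leq 2})\in L^2(X,\mu)$.

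Next, I would apply the main classification of \cite{jt21}: for any ergodic separable $\Z^\omega$-system $(X,\mu,T)$, the degree-$2$ Host--Kra factor is isomorphic, as a $\Z^\omega$-system, to a homogeneous space $H/\Gamma$ equipped with Haar measure, where $H$ is a filtered locally compact group of degree at most $2$, $\Gamma$ is a filtered lattice in $H$, and the action is translation $T^h_{H/\Gamma}(x)=\phi(h)x$ by some group homomorphism $\phi\colon\Z^\omega\to H$, exactly as in \eqref{thh}. Let $\Pi\colon X\to H/\Gamma$ denote the corresponding $\Z^\omega$-morphism (the composition of the Host--Kra projection with this isomorphism); since $g$ is $\X_{\leq 2}$-measurable we may write $g=\tilde F\circ\Pi$ for some $\tilde F\in L^2(H/\Gamma)$ with $\|\tilde F\|_{L^2(H/\Gamma)}=\|g\|_{L^2(X,\mu)}>0$.

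Finally, because $\Gamma=\Gamma_0$ is cocompact in $H=H_0$, the quotient $H/\Gamma$ is compact, so continuous functions are dense in $L^2(H/\Gamma)$; choose a continuous $F\colon H/\Gamma\to\C$ with $\|\tilde F-F\|_{L^2}<\|\tilde F\|_{L^2}$. Writing $\nu$ for the pushforward of $\mu$ by $\Pi$ (Haar measure on $H/\Gamma$) and using that $F\circ\Pi$ is $\X_{\leq 2}$-measurable,
$$\int_X f\,\overline{F\circ\Pi}\,d\mu=\int_X g\,\overline{F\circ\Pi}\,d\mu=\int_{H/\Gamma}\tilde F\,\overline F\,d\nu,$$
whose real part is at least $\|\tilde F\|_{L^2}^{2}-\|\tilde F\|_{L^2}\|F-\tilde F\|_{L^2}>0$, giving \eqref{fFpi}.

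The main (and essentially the only) obstacle is the invocation of the structure theorem from \cite{jt21}: passing from the classical Conze--Lesigne/Host--Kra classification of the degree-$2$ factor for $\Z$-actions to ergodic $\Z^\omega$-actions is substantial, since one must solve the relevant Conze--Lesigne cocycle equations over the countably generated group $\Z^\omega$ and allow the structure group to be an arbitrary filtered locally compact degree-$2$ group, potentially disconnected and with large torsion, rather than a Heisenberg-type nilmanifold. Once that classification is granted, the remainder of the proof is just the bookkeeping above combined with an $L^2$-to-continuous approximation on the compact homogeneous space $H/\Gamma$.
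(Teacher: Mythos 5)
Your overall strategy matches the paper's: reduce to the degree-$2$ Host--Kra factor via the characteristic factor property, invoke the structure theorem of \cite{jt21}, and then pick a continuous $F$ correlating with the projection of $f$ and pull back. However, there is one genuine misstatement in the key step. You assert that for an ergodic separable $\Z^\omega$-system, the degree-$2$ Host--Kra factor $\mathrm{Z}^2(X)$ \emph{is isomorphic} to a single translational system $H/\Gamma$. That is not what \cite[Theorem 1.7]{jt21} gives: the Conze--Lesigne factor is only an \emph{inverse limit} of such translational systems, and in general cannot be presented as a single $H/\Gamma$ with $H$ locally compact (the paper emphasizes this point in the remark following Theorem \ref{hk-inverse}, and also in the present proof). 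As a result, $g=\E(f\mid\mathrm{Z}^2(X))$ lives in the inverse limit, and you need the extra observation that $\bigcup_\alpha L^2(Y_\alpha)$ is dense in $L^2(\mathrm{Z}^2(X))$ so that $\E(f\mid H/\Gamma)\neq 0$ holds for at least one translational system $H/\Gamma$ in the tower; only then can you define $\tilde F$ on $H/\Gamma$ and approximate. This is exactly the step the paper records with the sentence ``Thus we have $\E(f\mid H/\Gamma)\neq 0$ for at least one such system.'' The fix is short but the structure theorem must be quoted correctly for the argument to go through.

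One further small technicality that the paper flags and you elide: the factor relationship initially only provides an abstract (measure-algebra level) factor map, and this needs to be upgraded to a concrete measure-preserving $\Z^\omega$-morphism $\Pi\colon X\to H/\Gamma$ (the paper cites \cite[Proposition A.2(i)]{jt21} for this). Your final $L^2$ approximation argument is a fine substitute for the paper's appeal to Lusin's theorem, so once the inverse-limit and concrete-factor-map issues are addressed your proof coincides with the paper's.
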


\begin{proof}  For the definition of any unexplained term in this argument we refer to \cite{jt21}.  The hypothesis  $\|f\|_{U^{3}(X)} > 0$ is equivalent to $\E(f|\mathrm{Z}^2(X)) \neq 0$, where $\mathrm{Z}^2(X)$ is the second Host--Kra--Ziegler factor; see e.g., \cite[Appendix A]{btz}.  By \cite[Lemma 1.2]{jt21}, $\mathrm{Z}^2(X)$ is a Conze--Lesigne system.  By \cite[Theorem 1.7]{jt21}, $\mathrm{Z}^2(X)$ is then the inverse limit of translational systems $H/\Gamma$, with $H = (H, H_2)$ a filtered locally compact Polish group of degree at most $2$, $\Gamma$ a filtered lattice in $H$, and with the action given by \eqref{thh}.  Thus we have $\E(f|H/\Gamma) \neq 0$ for at least one such system.  By Lusin's theorem this implies the existence of a continuous function $F \colon H/\Gamma \to \C$ such that
\begin{equation}\label{hgam}
 \int_{H/\Gamma} \E(f|H/\Gamma)(y) \overline{F(y)}\ d\mu_{H/\Gamma}(y) \neq 0
 \end{equation}
where $d\mu_{H/\Gamma}$ is the Haar probability measure on $H/\Gamma$.  As $H/\Gamma$ is a factor of $X$, there is an (abstract) factor map $\Pi \colon X \to H/\Gamma$, which can be upgraded to a concrete probability-preserving map by \cite[Proposition A.2(i)]{jt21}.  Pulling back \eqref{hgam} by $\Pi$ we obtain \eqref{fFpi}, and the claim follows.
\end{proof}

\begin{remark} Note in the above theorem that $H$ is merely locally compact rather than a Lie group.  This is necessary in order for the theorem to hold; see the example presented after \cite[Conjecture 2.14]{shalom1} (in the discussion of \cite[Theorem 4.3]{shalom1}).  In particular, we cannot necessarily take $H/\Gamma$ to be a nilmanifold, although we shall later see that it is still an inverse limit of nilmanifolds (in the category of compact nilspaces, not the category of $\Z^\omega$-systems).  On the other hand, the requirement that $X$ be separable can be easily removed in practice; see \cite{jt21} for further discussion.
\end{remark}

In Sections \ref{nonst-impl}-\ref{stab-sec} we establish the following connection between Theorem \ref{hk-inverse} and Conjecture \ref{uk-inverse-nil}:

\begin{theorem}[Conze--Lesigne inverse theorem implies Gowers inverse theorem]\label{implication} Theorem \ref{hk-inverse} implies the $s=2$ case of Conjecture \ref{uk-inverse-nil}.
\end{theorem}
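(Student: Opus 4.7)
The plan is a nonstandard/correspondence-principle argument that converts a hypothetical sequence of finite counterexamples into an ergodic $\Z^\omega$-system to which Theorem \ref{hk-inverse} applies, followed by a stability-type descent back to the finite setting. Suppose for contradiction that the $s=2$ case of Conjecture \ref{uk-inverse-nil} fails; then there is some $\eta > 0$, a sequence of finite abelian groups $G_n$, and $1$-bounded functions $f_n \colon G_n \to \C$ with $\|f_n\|_{U^3(G_n)} \geq \eta$, such that no correlation of the form \eqref{fF} holds uniformly in $n$ against a degree-$2$ filtered nilmanifold drawn from any fixed finite collection.

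First I would form the ultraproduct $\mathbf{G} \coloneqq \prod_{n \to p} G_n$ with respect to a nonprincipal ultrafilter on $\N$, endowed with its Loeb probability measure, and upgrade it to a $\Z^\omega$-system by choosing a suitable countable sequence of internal elements $g_1, g_2, \dots \in \mathbf{G}$ and letting the generator $e_i$ of $\Z^\omega$ act by translation by $g_i$. With a generic choice of the $g_i$ the action is ergodic, and the F{\o}lner averages over $\Phi_n$ from \eqref{folner} recover uniform averages on each $G_n$ in the ultralimit; matching \eqref{unif-def} with \eqref{bo}--\eqref{bo-global} then gives $\|\mathbf{f}\|_{U^3(\mathbf{G})} \geq \eta > 0$ for the ultralimit $\mathbf{f}$ of the $f_n$. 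Restricting to a separable $T$-invariant sub-$\sigma$-algebra containing $\mathbf{f}$ yields an ergodic separable $\Z^\omega$-system meeting the hypotheses of Theorem \ref{hk-inverse}.

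Applying Theorem \ref{hk-inverse} produces a filtered locally compact group $H$ of degree at most $2$, a filtered lattice $\Gamma \leq H$, a homomorphism $\phi \colon \Z^\omega \to H$, a $\Z^\omega$-morphism $\Pi \colon \mathbf{G} \to H/\Gamma$, and a continuous $F \colon H/\Gamma \to \C$ with $\int \mathbf{f}\cdot \overline{F \circ \Pi}\,d\mu \neq 0$. Using the remark following Theorem \ref{hk-inverse} that $H/\Gamma$ is an inverse limit of genuine $2$-step filtered nilmanifolds in the compact nilspace category, I would approximate $F$ uniformly by a Lipschitz function pulled back from a finite-dimensional filtered nilmanifold $H_0/\Gamma_0$ via a polynomial map $\Pi_0 \colon \mathbf{G} \to H_0/\Gamma_0$, while retaining a nonzero correlation with $\mathbf{f}$.

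The final and main step is to descend this nonstandard correlation back to the finite groups $G_n$. By transfer, $\Pi_0$ is represented by an internal sequence of measurable maps $g_n \colon G_n \to H_0/\Gamma_0$ which are \emph{approximate} polynomial maps: the cocycle identities \eqref{relate} for a degree-$2$ polynomial hold up to errors that are null in the ultralimit, hence arbitrarily small outside sets of arbitrarily small measure for sufficiently many $n$ along the ultrafilter. The hard part, carried out in the stability section, is to upgrade each such approximate $g_n$ into a genuine polynomial map into a nilmanifold chosen from a finite list depending only on $\eta$, while losing at most a bounded factor in the correlation $|\E_{x \in G_n} f_n(x)\overline{F_0(g_n(x))}|$. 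This would be done by splitting $g_n$ into its abelian base component (an approximate homomorphism into the maximal torus quotient of $H_0/\Gamma_0$, correctable to a genuine homomorphism by standard Fourier/Bohr-set arguments as in Theorem \ref{inverse-u2}) and its quadratic fibre component (an approximate $2$-cocycle, to be straightened by solving a finitary Conze--Lesigne-type equation on $G_n$, which is exactly where the general abelian group structure enters); the tools developed in the proof of Theorem \ref{inverse-u3} for converting locally bilinear forms into locally quadratic phases should supply precisely the cohomological rigidity needed to close this step. Once straightened, an application of {\L}o\'s's theorem yields a genuine correlation at the finite level for $p$-many $n$, contradicting the assumed failure of Conjecture \ref{uk-inverse-nil}.
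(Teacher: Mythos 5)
Your outline matches the paper's architecture closely up to and including the approximation of $H/\Gamma$ by an inverse limit of genuine degree-$2$ nilmanifolds: the reduction to a nonstandard formulation (Conjecture~\ref{uk-inverse-nil-nonst}) via ultraproduct and Loeb measure, the construction of the ergodic separable $\Z^\omega$-system by randomly generated shift directions (Proposition~\ref{corr}), the application of Theorem~\ref{hk-inverse}, and the Gleason--Yamabe reduction in Section~\ref{nilquot} culminating in Corollary~\ref{nil-approx} are exactly the steps the paper takes. Two small imprecisions: the paper must draw the shift directions $\mathbf{g}_j$ from the Loeb space built on the ultraproduct of the product spaces $\prod_{\n\to p}(G_\n^\N)$, not as a mere countable sequence of internal elements of $G$, because the sampling identity needed for the Borel--Cantelli step requires measurability in the larger $\sigma$-algebra $\mathcal{L}_{G^J}$; and the identity $\|f\|_{U^3(X)}=\|f\|_{U^3(G)}$ needs the full correspondence-principle argument rather than just ``matching averages in the ultralimit.''

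The final stability step is where your proposal diverges from the paper, and the divergence hides a genuine gap. You propose to represent the polynomial map into the approximating nilmanifold as a sequence of approximate polynomial maps $g_n\colon G_n\to H_0/\Gamma_0$ and to repair each $g_n$ by splitting into an abelian base component and a quadratic fibre component, correcting the latter by ``the tools developed in the proof of Theorem~\ref{inverse-u3}.'' The Fourier-analytic tools of Section~\ref{Fourier-sec} (Lemmas~\ref{ext} and~\ref{integ}) globalize and integrate locally bilinear forms on a Bohr set; they give no mechanism for upgrading an a.e.-defined Loeb-measurable map into a nilmanifold that is only approximately in the Host--Kra cube structure into a genuine internal polynomial map, and the whole point of the ergodic approach in this paper is to be independent of the quadratic Fourier analysis of Section~\ref{Fourier-sec}. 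What the paper actually does is define an ``almost polynomial'' map $\tilde g\colon G\to\tilde H/\tilde\Gamma$ to be one whose pointwise cubes lie in $\HK^k(\tilde H/\tilde\Gamma)$ for $\mu_{\HK^k(G)}$-a.e.\ cube of $G$ and for every standard $k$, verify that $\pi\circ\Pi$ is almost polynomial using the equality of Gowers and Host--Kra inner products, and then invoke Lemma~\ref{stab-poly} (a nonstandard translation of a stability theorem of Candela and Szegedy, proved self-containedly in Appendix~\ref{polystab}). That proof proceeds by induction on the degree of the filtration, using a repair lemma for almost-homomorphisms, rigidity of infinitesimally-near-constant internal polynomials (via the fact that Lie groups have no small subgroups), and cocycle triviality for vector-space-valued cocycles on dense internal domains, with the repair carried out once at the nonstandard level rather than $n$-by-$n$. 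Your proposed per-$n$ repair would also require uniform error control to feed into {\L}o\'s's theorem, which you have not supplied, and your splitting strategy is tied to $s=2$, whereas the stability lemma the paper uses is degree-independent (and the paper stresses that this part of the argument works for all $s$). As it stands, the last step of your proposal is an unproved sketch of a different, and probably not viable, route through the hard part of the argument.
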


Our arguments are an adaptation of those in \cite{tz-finite}, in which a random sampling method is used to obtain a correspondence principle that allows one to connect the combinatorial setting of Gowers uniformity to the ergodic setting of Host--Kra uniformity, and a rigidity result for polynomial maps is used to obtain the desired combinatorial conclusion.  Similar strategies (omitting the random sampling step as one no longer seeks to introduce dynamics) have also been employed in the nilspace framework \cite{szegedy-inv2}, \cite{szegedy-inv}, \cite{candela-szegedy-inverse}.  Our arguments are in fact valid for any $s$, in that the obvious generalization of Theorem \ref{hk-inverse} to a higher value of $s$ (replacing the $U^3$ seminorm with the $U^{s+1}$ norm, and making $H$ of degree $s$ rather than degree $2$) would imply the corresponding case of Conjecture \ref{uk-inverse-nil}.  However, we suspect that these higher degree analogues of Theorem \ref{hk-inverse} may be false as stated, and may need some further modification in order to be able to prove Conjecture \ref{uk-inverse-nil} for general $s$.  We hope to report further on this question in future work.

\subsection{Notation}\label{notation-sec}

We use $X \ll Y$, $Y \gg X$, $X = O(Y)$ to denote the estimate $|X| \leq C Y$ for some constant $C$, and write $X \asymp Y$ for $X \ll Y \ll X$.  If we need the implied constant $C$ to depend on additional parameters, we indicate this by subscripts, thus for instance $X \gg_{\eta,s} 1$ denotes an estimate of the form $X \geq c_{\eta,s}$ for some $c_{\eta,s}>0$ depending only on $\eta,s$.

In some sections we will switch to nonstandard asymptotic notation, in particular using $o(1)$ to refer to an infinitesimal quantity; see Appendix \ref{nonst} for details.

Given a measure space $X = (X, \X, \mu)$, we define the Lebesgue spaces $L^p(X) = L^p(X,\mu) = L^p(X,\X,\mu)$ for $1 \leq p \leq \infty$ in the usual fashion, in particular identifying any two functions that agree $\mu$-almost everywhere.

\section{Quadratic Fourier analysis arguments}\label{Fourier-sec}

\subsection{The structure of locally bilinear forms}

Our proof of Theorem \ref{inverse-u3} uses the same quadratic Fourier analysis arguments that were also used in \cite{green-tao-inverseu3}, \cite{samorodnitsky}.  The main new ingredient in addition to these arguments is a finer analysis of the structure of locally bilinear forms.  We first give a definition of these forms.

\begin{definition}  Let $U$ be a subset of an additive group $G$, and let $H$ be another additive group.  A \emph{locally bilinear form} $B \colon U \times U \to H$ is a function such that 
$$ B(x_1+x_2, y) = B(x_1,y) + B(x_2,y)$$
whenever $x_1,x_2,x_1+x_2,y \in U$, and similarly
$$ B(x, y_1+y_2) = B(x,y_1) + B(x,y_2)$$
whenever $x,y_1,y_2,y_1+y_2 \in U$.
If $U=G$, we refer to $B$ as a \emph{globally bilinear form}.  If $B(x,y)=B(y,x)$ for all $x,y \in U$, we call $B$ \emph{symmetric}.
\end{definition}

Consider a Bohr set $\Bohr(S,\rho)$ of an additive group $G$.  Clearly, any global bilinear form $\tilde B \colon G \times G \to H$ restricts to a local bilinear form $B \colon \Bohr(S,\rho) \times \Bohr(S,\rho) \to H$.  Unfortunately, the converse is not true; it is not difficult to find local bilinear forms $B \colon \Bohr(S,\rho) \times \Bohr(S,\rho) \to H$ that do not extend to a global bilinear form (e.g., see Example \ref{linear}).  However, it turns out that global extensions (or more precisely, local lifts) do exist as long as one also lifts the group $G$ to a larger group.  The construction is as follows.  If $G$ is a finite additive group and $S \subset \hat G$ is a set of frequencies, we let $(\xi)_{\xi \in S} \colon G \to (\R/\Z)^S$ denote the homomorphism
$$ (\xi)_{\xi \in S}(x) \coloneqq ( \xi \cdot x )_{\xi \in S}.$$
Thus for instance $\Bohr(S,\rho)$ is the inverse image of the cube $(-\rho,\rho)^S$ under this homomorphism $(\xi)_{\xi \in S}$, where we view $(-\rho,\rho)$ as a subset of $\R/\Z$.  We then define a new group $G_S \leq G \times \R^S$ by the formula
$$ G_S \coloneqq \{ (x, \theta) \in G \times \R^S \colon (\xi)_{\xi \in S}(x) = \theta \mod \Z^S \},$$
that is to say $G_S$ consists of tuples $(x, (\theta_\xi)_{\xi \in S})$ where $x \in G$ and for each $\xi \in S$, $\theta_\xi$ is a real number with $\theta_\xi = \xi \cdot x \mod 1$.  Clearly, $G_S$ is a lattice, and thus $G_S$ is a finitely generated abelian group, and that one has a short exact sequence 
\begin{equation}\label{exact}
0 \to \Z^S \to G_S \to G \to 0
\end{equation}
where the inclusion homomorphism from $\Z^S$ to $G_S$ is given by $\theta \mapsto (0,\theta)$, and the projection homomorphism from $G_S$ to $G$ is given by $(x,\theta) \mapsto x$.  Thus $G_S$ is an extension of $G$ by the lattice $\Z^S$.

If we endow $\R^S$ with the $\ell^\infty$ norm $\| \theta \|_{\R^S} \coloneqq \sup_{\xi \in S} |\theta_\xi|$ (with the convention that the supremum is identically zero when $S$ is empty) and define the cubes
$$ B_{\R^S}(0,\rho) \coloneqq \{ \theta \in \R^S: \|\theta\|_{\R^S} < \rho \} = (-\rho,\rho)^S$$
then by chasing definitions, we see that we have an identification
\begin{equation}\label{bohr}
 G_S \cap (G \times B_{\R^S}(0,\rho)) \equiv \Bohr(S,\rho)
\end{equation}
for $0 < \rho < 1/2$, where the identification map is given by restricting the projection homomorphism $\pi \colon G_S \to G$ to the set $G_S \cap (G \times B_{\R^S}(0,\rho))$.

The following useful lemma lets us (locally) lift a locally bilinear form to a globally bilinear one:

\begin{lemma}[Globalising a locally bilinear form]\label{ext}  Let $G$ be a finite additive group, let $S \subset \hat G$ be a set of frequencies, and let $0 < \rho < 1/2$.  Suppose that we have a locally bilinear form $B \colon \Bohr(S,\rho) \times \Bohr(S,\rho) \to H$ into some additive group $H$.  Then there exists a globally bilinear form $\tilde B \colon G_S \times G_S \to H$ such that
\begin{equation}\label{tbb}
 \tilde B( (x, \theta), (y,\sigma) ) = B(x,y)
 \end{equation}
whenever $(x,\theta), (y,\sigma) \in G_S$ are such that $\|\theta\|_{\R^S}, \|\sigma\|_{\R^S} \leq \exp( - C |S|^C ) \rho$ for some absolute constant $C>0$.  Furthermore, if $B$ is symmetric, then one can choose $\tilde B$ to also be symmetric.
\end{lemma}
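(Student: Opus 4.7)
The plan is to transfer the problem to $G_S$ via the identification \eqref{bohr}: $B$ pulls back to a locally bilinear form $B'$ on $V_\rho \times V_\rho$, where $V_\rho := G_S \cap (G \times B_{\R^S}(0,\rho))$ is a symmetric neighborhood of the identity in $G_S$. The task then becomes extending $B'$ to a globally bilinear form $\tilde B$ on $G_S \times G_S$ that agrees with $B'$ on $V_{\rho''} \times V_{\rho''}$ for $\rho'' = \exp(-C|S|^C)\rho$.

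I plan to construct $\tilde B$ by first exhibiting a $\Z$-generating set $v_1, \ldots, v_n$ of $G_S$ contained inside $V_{\rho_0}$ for some $\rho_0 = \exp(-C|S|^C)\rho$. The torsion subgroup $K := \ker((\xi)_{\xi \in S}) \subseteq G$ embeds in $G_S$ as $K \times \{0\} \subset V_{\rho_0}$ and contributes torsion generators; Minkowski's successive minima theorem applied to the rank-$|S|$ lattice $L \subseteq \R^S$ that is the projection of $G_S$, combined with canonical small lifts of elements of $\Bohr(S, \rho_0)$, supplies generators covering the free part, with the exponential scale separation between $\rho$ and $\rho_0$ providing the required slack. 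I then define the candidate extension by $\tilde B(\sum_i a_i v_i, \sum_j b_j v_j) := \sum_{i,j} a_i b_j B'(v_i, v_j)$, a formula that manifestly inherits symmetry from $B'$ when $B$ is symmetric.

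The key verification is well-definedness: every $\Z$-relation $\sum_i a_i v_i = 0$ in $G_S$ must force $\sum_i a_i B'(v_i, v_j) = 0$ for each $j$. I will prove this by a Steinitz-type walking argument. The signed multiset $\{\mathrm{sgn}(a_i)\, v_i\}$ counted with multiplicities $|a_i|$ has vanishing $\R^S$-sum, so by Steinitz' lemma applied in the $\ell^\infty$ norm on $\R^S$, it admits a reordering whose partial sums have $\R^S$-norm at most $O(|S|) \cdot \rho_0 \ll \rho$. Each partial sum therefore lies in $V_\rho$, and iterated local additivity of $B'(\cdot, v_j)$ telescopes the relation to $B'(0, v_j) = 0$. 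The identity \eqref{tbb} follows by the analogous walking applied in each variable: for $v, w \in V_{\rho''}$, express $v = \sum a_i v_i$ and $w = \sum b_j v_j$, then walk in the first argument (with $w \in V_\rho$ fixed) and subsequently in the second (with each $v_i \in V_{\rho_0} \subset V_\rho$ fixed) to conclude $\tilde B(v, w) = B'(v, w) = B(\pi v, \pi w)$.

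The principal obstacle is the walking step, specifically ensuring that partial sums remain inside $V_\rho$ for any conceivable relation: Steinitz' lemma dispatches this cleanly, with the exponential gap $\rho/\rho_0$ absorbing both the $O(|S|)$ Steinitz overhead and the denominators introduced by Minkowski's theorem. A secondary technical point is verifying that $V_{\rho_0}$ (together with the torsion and Minkowski contributions) truly generates $G_S$; this uses the fact that after enlarging $\rho_0$ by the exponential slack, integer combinations of $V_{\rho_0}$-elements cover the $\Z^S$-directions of $G_S$ via multiplicative relations inside the finite group $G$.
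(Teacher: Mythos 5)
Your plan hinges on a $\Z$-generating set of $G_S$ contained in $V_{\rho_0}$, and this is the step that fails: $V_\rho$ (and a fortiori $V_{\rho_0}\subset V_\rho$) need not generate $G_S$. Take $G = \Z/2N\Z$ with $N$ large and $S = \{\xi\}$ where $\xi(x)=x/2 \bmod 1$ is the order-$2$ character. For any $\rho<1/2$ one has $\Bohr(S,\rho)=2\Z/2N\Z$, so $V_\rho = \{(x,0):x\text{ even}\}$ is exactly the torsion subgroup of $G_S$; it generates only itself, a finite group, while $G_S \cong \Z/N\Z\oplus\Z$ has free rank one. The element $(1,1/2) \in G_S$ has no representative with $\R^S$-norm below $1/2$, and shrinking $\rho_0$ only makes things worse. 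Your proposal is also internally inconsistent here: Minkowski's theorem applied to the $\R^S$-projection lattice of $G_S$ (which contains $\Z^S$) can return basis vectors of $\ell^\infty$-norm as large as $1$, so your ``Minkowski contributions'' would not lie in $V_{\rho_0}$ and the coefficients $B'(v_i,v_j)$ in your defining formula would then be undefined. The Steinitz walk, while a nice idea, never gets started.

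The paper resolves this by \emph{not} insisting on small generators. It applies the discrete John theorem to this projection lattice to obtain a lattice basis $w_1,\dots,w_{|S|}$ together with scales $N_1\geq\dots\geq N_{|S|}$, lifts to $v_i = (g_i,w_i) \in G_S$, and uses the resulting \emph{unique} representation $G_S \cong K \oplus \Z v_1 \oplus\cdots\oplus \Z v_{|S|}$ with $K$ the torsion. Because this representation is unique, no $\Z$-relations arise and well-definedness of the coordinate formula is automatic---there is nothing for a Steinitz argument to check. The form $\tilde B$ is then defined by expanding $B'$ in these coordinates, with the cross terms $B'(v_i,v_{i'})$ included only for those indices $i$ whose $N_i$ is large enough that $v_i$ lies in $V_\rho$; the remaining (possibly large) $v_i$ simply receive coefficient zero, and the resulting expression is nevertheless a genuine globally bilinear form on all of $G_S$. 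Repairing your argument essentially requires passing to such a basis rather than a generating set contained in a small ball, which does not exist.
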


\begin{example}\label{linear}  Let $G = \Z/N\Z$ be a cyclic group, let $\alpha$ be a real number, and let $0 < \rho < 1/4$. 
Let $\phi:\Z/N\Z\to \Z$ be the lift $\phi(x=[n]) = n \mod N$, i.e., a group homomorphism such that $\pi\circ \phi=\id_{\Z/N\Z}$, where $\pi:\Z\to \Z/N\Z$ is the canonical projection. 
Let $\frac{1}{N} \colon \Z \to \R/\Z$ be the operation of division by $N$. 
Let $S=\{\psi=\frac{1}{N}\circ \phi\}$ and $\Bohr(S,\rho) = \{ x\in \Z/N\Z: \|\psi(x)\|_{\R/\Z} < \rho \}$.  
The form $B \colon \Bohr(S,\rho) \times \Bohr(S,\rho) \to \R$ defined by
$$ B( [n], [m]) \coloneqq \alpha \phi(n)\phi(m)$$
is locally bilinear, but will not in general be extendable to a global bilinear form on $\Z/N\Z$.  
The problem will be that  we may have carry-over effects on $\Z/N\Z$ which would destroy linearity of $B$.   
However, on the infinite cyclic subgroup
$$ G_S \coloneqq \left\{ (x,\theta) \in \Z/N\Z \times \R: \frac{x}{N} = \theta \mod 1 \right\} = \left\{ \left(n \mod N, \frac{n}{N}\right): n \in \Z \right\}$$
of $\Z/N\Z \times \R$, we can define the global bilinear form $\tilde B \colon G_S \times G_S \to \R$ by
$$ \tilde B\left( \left(n \mod N, \frac{n}{N}\right), \left(m \mod N, \frac{m}{N}\right) \right) \coloneqq \alpha nm$$
and then it is clear that \eqref{tbb} holds for all $(x,\theta), (y,\sigma) \in G_S$ with $|\theta|, |\sigma| < \rho$. 
By creating a second coordinate, we could honestly record the effects of carry-over.   
\end{example}

\begin{proof}  We modify the proof of \cite[Lemma 4.22]{tao-vu}.  We may assume that $|S| \geq 1$, since the claim is trivial for $|S|=0$.  Using the equivalence \eqref{bohr}, we have a locally bilinear form 
$$ B' \colon \left( G_S \cap (G \times B_{\R^S}(0,\rho))\right) \times \left(G_S \cap (G \times B_{\R^S}(0,\rho))\right) \to H$$
such that
$$ B'( (x, \theta), (y,\sigma) ) = B(x,y)$$
whenever $(x,\theta), (y,\sigma) \in G_S$ are such that $\|\theta\|_{\R^S}, \|\sigma\|_{\R^S} \leq \rho$, with $B'$ symmetric whenever $B$ is.  Our task is thus to locate a globally bilinear form $\tilde B \colon G_S \times G_S \to H$ such that
\begin{equation}\label{tbx}
 \tilde B( (x,\theta), (y,\sigma) ) = B'( (x, \theta), (y,\sigma) ) 
 \end{equation}
whenever $\|\theta\|_{\R^S}, \|\sigma\|_{\R^S} \leq \exp(-C |S|^C ) \rho$, with $\tilde B$ symmetric whenever $B'$ is.

Let $\Gamma \coloneqq \{ \theta \colon (x,\theta) \in G_S \}$, then $\Gamma$ is a subgroup of $\R^S$ that contains $\Z^S$ as a finite index subgroup, and is thus a lattice.  Applying the discrete John's theorem \cite[Lemma 1.6]{tao-vu-john} (see also \cite{berg-henk} for recent refinements), we can find linearly independent vectors $w_\xi \in \Gamma$ and real numbers $N_\xi > 0$ for $\xi \in S$ such that
$$ B_{\R^S}\left(O(|S|)^{-3|S|/2} t\right) \cap \Gamma \subset (-tN,tN) \cdot w \subset B_{\R^S}(t) \cap \Gamma $$
for any $t>0$, where
\begin{equation}\label{tnw}
 (-tN,tN) \cdot w \coloneqq \{ n_1 w_1 + \dots + n_{|S|} w_{|S|}: n_1,\dots,n_{|S|} \in \Z; |n_i| < t N_i \hbox{ for all } i=1,\dots,|S|\}.
 \end{equation}
In other words, the $w_1,\dots,w_{|S|}$ generate $\Gamma$, and one has the inequalities
$$ O(|S|)^{-3|S|/2} \sup_{1 \leq i \leq |S|} \frac{|n_i|}{N_i} \leq \| n_1 w_1 + \dots + n_{|S|} w_{|S|}\|_{\R^S} \leq \sup_{1 \leq i \leq |S|} \frac{|n_i|}{N_i}$$
for all integers $n_1,\dots,n_{|S|}$.

We may relabel so that $N_1 \geq \dots \geq N_{|S|}$.  We can find $v_1,\dots,v_{|S|} \in G_S$ such that each $v_i$ is of the form $v_i = (g_i,w_i)$ for some $g_i \in G$.  Every element $(x,\theta)$ of $G_S$ can thus be uniquely represented in the form
$$ (x,\theta) = (y,0) + n_1 v_1 + \dots + n_{|S|} v_{|S|}$$
where $n_1,\dots,n_{|S|}$ are integers and $y \in K \coloneqq \{ y \in G: (y,0) \in G_S\}$.  If we let $j$ be the largest index for which $N_j > 1$ (or $j=0$ if no such $N_j$ exists), we conclude that the form
$$ ((y,n_1,\dots,n_j), (y',n'_1,\dots,n'_j)) \mapsto
B'( (y,0) + n_1 v_1 + \dots + n_j v_j, (y',0) + n'_1 v_1 + \dots + n'_j v_j )$$
is a locally bilinear form on
$$ \{ (y,n_1,\dots,n_j) \in K \times \Z^j: |n_i| < N_i \ \forall 1 \leq i \leq j \}.$$
In particular, this form can be expanded in coordinates as
$$ B'((y,0),(y',0)) + \sum_{i=1}^j n_i B'(v_i,(y',0)) + n'_i B'((y,0),v'_i) + \sum_{1 \leq i,i' \leq j} n_i n_{i'} B'(v_i,v_{i'}).$$
This latter expression is well-defined in $H$ for all $y,y' \in K$ and $v_1,\dots,v_{|S|}, v'_1,\dots,v'_{|S|} \in \Z$ (those $v_i, v'_i$ with $i > j$ play no role in this form).  We can thus extend this form to a globally bilinear form on $K \times \Z^{|S|}$, which we can identify with $G_S$; by inspection we see that this form is symmetric if $B'$ is.  From \eqref{tnw} we see that \eqref{tbx} holds whenever 
$\|\theta\|_{\R^S}, \|\sigma\|_{\R^S} \leq (C|S|)^{-3|S|/2} \rho$ for some absolute constant $C$, and the claim follows.
\end{proof}

As our main application of this globalisation lemma we can express symmetric bilinear forms in terms of quadratic polynomials.

\begin{lemma}[Integrating a symmetric bilinear form]\label{integ}  Let $G$ be a finitely generated additive group. 
\begin{itemize}
\item[(i)] If $B \colon G \times G \to \R/\Z$ is a symmetric globally bilinear form, then there exists a globally quadratic function $\phi \colon G \to \R/\Z$ such that $\phi(x+y) = \phi(x)+\phi(y) + B(x,y)$ for all $x,y \in G$.
\item[(ii)]  If $G$ is finite, $S \subset \hat G$, $\rho>0$, and $B: \Bohr(S,\rho) \times \Bohr(S,\rho) \to \R/\Z$ is a symmetric locally bilinear form, then there exists $\exp(-|S|^{O(1)}) \rho \ll \rho' \leq \rho$ and a locally quadratic function $\phi \colon \Bohr(S,\rho') \to \R/\Z$ such that
$\phi(x+y) = \phi(x)+\phi(y) + B(x,y)$ for all $x,y \in \Bohr(S,\rho'/2)$.  Furthermore, there is a globally quadratic lift $\tilde \phi \colon G_S \to \R/\Z$ such that $\tilde \phi(x,\theta) = \phi(x)$ whenever $(x,\theta) \in G_S$ and $\|\theta\|_{\R^S} < \rho'$.
\end{itemize}
\end{lemma}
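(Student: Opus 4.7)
For part (i), the plan is to exploit divisibility of $\R/\Z$. Given the symmetric bilinear form $B \colon G \times G \to \R/\Z$, form the central abelian extension $E := G \times \R/\Z$ (as a set) with group operation $(x,s)+(y,t) := (x+y, s+t-B(x,y))$. Symmetry of $B$ makes $E$ abelian and bilinearity delivers the cocycle identity for associativity, yielding a short exact sequence $0 \to \R/\Z \to E \to G \to 0$ of abelian groups. Since $\R/\Z$ is divisible, hence injective in the category of abelian groups, this sequence splits; any section has the form $s(x) = (x,-\phi(x))$, and the condition that $s$ be a homomorphism unpacks exactly to $\phi(x+y) = \phi(x) + \phi(y) + B(x,y)$. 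Iterating this identity twice shows $\aderiv_{h'}\aderiv_h\phi(x) = B(h',h)$, whose further derivative in any direction vanishes, so $\phi$ is globally quadratic.

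For part (ii), I would combine part (i) with the lifting Lemma~\ref{ext}. First apply Lemma~\ref{ext} to obtain a symmetric globally bilinear $\tilde B \colon G_S \times G_S \to \R/\Z$ that agrees with $B$ on pairs of elements whose $\R^S$-components are at most $\exp(-C|S|^C)\rho$. Since the exact sequence $0 \to \Z^S \to G_S \to G \to 0$ exhibits $G_S$ as finitely generated abelian, part (i) yields a globally quadratic $\tilde\phi \colon G_S \to \R/\Z$ with $\tilde\phi(v+w) - \tilde\phi(v) - \tilde\phi(w) = \tilde B(v,w)$. Set $\rho' \asymp \exp(-|S|^{O(1)})\rho$ small enough for all of the bounds below (say also $\rho' \leq 1/10$). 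For $x \in \Bohr(S,\rho')$, let $\iota(x) := (x,\theta_x) \in G_S$ be the canonical lift with $\|\theta_x\|_{\R^S} < 1/2$, which is forced to satisfy $\|\theta_x\|_{\R^S} < \rho'$. Define $\phi(x) := \tilde\phi(\iota(x))$; the function $\tilde\phi$ itself then serves as the required global lift, since for $\|\theta\|_{\R^S} < \rho'$ the element $(x,\theta)$ coincides with $\iota(x)$ by uniqueness of the lift.

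Verification of the desired properties of $\phi$ rests on two facts about the canonical lift $\iota$ on small scales, each proved by bounding $\R^S$-magnitudes and invoking uniqueness of integer representatives in an interval of length less than $1$: (a) for $x,y \in \Bohr(S,\rho'/2)$ one has $\iota(x+y) = \iota(x) + \iota(y)$, since both sides project to $x+y \in G$ and have $\R^S$-components inside $(-\rho',\rho')^S$; and (b) for $h_1,h_2,h_3 \in G$ such that every $z_I := x - \sum_{i \in I} h_i$ lies in $\Bohr(S,\rho')$, the identity $\iota(z_I) = \iota(x) - \sum_{i \in I} (\iota(x) - \iota(x-h_i))$ holds in $G_S$, as both sides project to $z_I$ in $G$ and have $\R^S$-components of magnitude at most $5\rho' < 1$. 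Fact (a), combined with the compatibility clause $\tilde B(\iota(x),\iota(y)) = B(x,y)$ from Lemma~\ref{ext}, gives $\phi(x+y) = \phi(x) + \phi(y) + B(x,y)$ on $\Bohr(S,\rho'/2)$. Fact (b) exhibits $\{\iota(z_I)\}_{I \subset \{1,2,3\}}$ as a Gowers $3$-cube in $G_S$, so $\aderiv_{h_1}\aderiv_{h_2}\aderiv_{h_3}\phi(x) = \sum_I (-1)^{|I|} \tilde\phi(\iota(z_I))$ vanishes by global quadraticity of $\tilde\phi$. The principal obstacle throughout is the bookkeeping needed to calibrate $\rho'$ so that the $\R^S$-magnitudes produced by sums and differences of canonical lifts remain strictly below $1$, precluding $\Z^S$-wraparound ambiguities that would otherwise destroy the cube structure and the descent from $\tilde\phi$ to $\phi$.
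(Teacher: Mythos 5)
Your proof of part (i) is correct but takes a genuinely different route from the paper's. You build the central abelian extension $E = G \times \R/\Z$ with group law twisted by $-B$, observe that symmetry of $B$ makes $E$ abelian and bilinearity gives associativity, and then invoke injectivity of the divisible group $\R/\Z$ (Baer) to split the sequence $0 \to \R/\Z \to E \to G \to 0$; unpacking the section gives exactly $\phi(x+y)=\phi(x)+\phi(y)+B(x,y)$. The paper instead reduces via the structure theorem to $G = \Z$ or $G = \Z/N\Z$ and writes down explicit antiderivatives (in the cyclic case using a binomial-coefficient expression and checking $N$-periodicity by hand). Your argument is shorter and, notably, never uses the finitely generated hypothesis — it gives part (i) for all abelian $G$, addressing the remark in the paper that ``the finite generation or finiteness hypotheses\ldots can be relaxed.'' Your computation that $\aderiv_{h'}\aderiv_h\phi = B(h',h)$ is constant (hence third derivatives vanish) correctly confirms global quadraticity.

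For part (ii) you follow essentially the same strategy as the paper — apply Lemma~\ref{ext}, then part (i), then descend $\tilde\phi$ to $\phi$ on a small Bohr set via the canonical lift — but you spell out the two ``no-wraparound'' facts (a) and (b) that control the $\Z^S$-ambiguity in the lift; the paper asserts local quadraticity tersely and leaves this bookkeeping implicit. Your facts (a) and (b) are correct: in both cases both sides of the asserted identity project to the same element of $G$, so their difference lies in $\Z^S$ and is forced to vanish by a magnitude bound ($2\rho'$ in case (a), at most $6\rho'$ in case (b)), which your choice of $\rho'$ accommodates. The derivation that the Gowers $3$-cube of canonical lifts is an honest cube in $G_S$, so that global quadraticity of $\tilde\phi$ kills the alternating sum, is exactly the right way to see local quadraticity of $\phi$. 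The identification of $\tilde\phi$ as the required global lift (since $(x,\theta)$ with $\|\theta\|_{\R^S} < \rho'$ must equal $\iota(x)$) is also correct.
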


It is possible that the finite generation or finiteness hypotheses on $G$ here can be relaxed, but we will not need to do so here. If $|G|$ were odd, then one proceed by setting $\phi(x) \coloneqq B(x, \frac{1}{2} x)$ since one now has the ability to divide by two in $G$; this observation was implicit in the arguments in \cite{green-tao-inverseu3}.  Similarly, if $G = \F_2^N$, then (for part (i) at least) one could proceed by inspecting the matrix coefficients of $B$ and extracting the upper triangular half to define $\phi$; this observation was implicit in the arguments in \cite{samorodnitsky}.  The fact that the above lemma holds for arbitrary finite additive groups $G$ is the main reason why we are able to establish Theorem \ref{inverse-u3} without additional hypotheses on $G$.

\begin{proof}  We begin with (i).  We first observe that if the claim is true for two finitely generated abelian groups $G_1,G_2$ then it is true for the direct product $G_1 \times G_2$.  Indeed, if $B: (G_1 \times G_2) \times (G_1 \times G_2) \to \R/\Z$ is a symmetric bilinear form and the claim is already established for $G_1,G_2$, then one can find quadratic functions $\phi_1: G_1 \to \R/\Z$, $\phi_2: G_2 \to \R/\Z$ such that
$$ \phi_1(x_1+y_1) = \phi_1(x_1) + \phi_1(y_1) + B((x_1,0),(y_1,0))$$
and
$$ \phi_2(x_2+y_2) = \phi_2(x_2) + \phi_2(y_2) + B((0,x_2),(0,y_2))$$
for all $x_1,y_1 \in G_1$ and $x_2,y_2 \in G_2$.  One then checks from direct calculation (using the symmetric bilinear nature of $B$) that the function $\phi: G_1 \times G_2 \to \R/\Z$ defined by
$$ \phi(x_1,x_2) := \phi_1(x_1) + \phi_2(x_2) + B((x_1,0),(0,x_2))$$
is a quadratic form on $G_1 \times G_2$ that obeys the required property 
$$ \phi(x_1+y_1,x_2+y_2) = \phi(x_1,y_1) + \phi(x_2,y_2) + B( (x_1,x_2), (y_1,y_2) ).$$
Since every finitely generated abelian group is the direct product of finitely many copies of the integers $\Z$ or cyclic groups $\Z/N\Z$, it thus suffices to verify the claim when $G$ is either the integers or a cyclic group.  If $G=\Z$, then the bilinear form $B$ takes the form
$$ B(x,y) = \alpha xy \mod 1$$
for some real number $\alpha \in \R$ (cf. Example \ref{linear}), and one can take $\phi(x) \coloneqq \frac{\alpha x^2}{2} \mod 1$.  If $G = \Z/N\Z$, then the bilinear form $B$ similarly takes the form
$$ B(x,y) = \frac{axy}{N} \mod 1$$
for some integer $a$, and then one can take
$$ \phi(x \mod N) \coloneqq \frac{a\binom{x}{2}}{N} - \frac{a x \binom{N}{2}}{N^2} \mod 1$$
for all integers $N$, observing from the identity $\binom{x+N}{2} = \binom{x}{2} + \binom{N}{2} + Nx$ that the right-hand side is $N$-periodic (regardless of whether $N$ is even or odd).  This establishes (i).

Now we establish (ii).  By Lemma \ref{ext}, we can find a symmetric globally bilinear form $\tilde B \colon G_S \times G_S \to \R/\Z$ such that
\begin{equation}\label{b1}
 \tilde B( (x, \theta), (y,\sigma) ) = B(x,y)
 \end{equation}
whenever $(x,\theta), (y,\sigma) \in G_S$ are such that $\|\theta\|_{\R^S}, \|\sigma\|_{\R^S} \leq \exp( - C |S|^C ) \rho$ for some absolute constant $C>0$.  By part (i), we can then find a globally quadratic function $\tilde \phi \colon G_S \to \R/\Z$ such that
\begin{equation}\label{b2}
 \tilde \phi(x+y, \theta+\sigma) = \tilde \phi(x,\theta)+\tilde \phi(y,\sigma) + B((x,\theta),(y,\sigma))
 \end{equation}
for any $(x,\theta), (y,\sigma) \in G_S$.  Using \eqref{bohr}, we can now define $\phi \colon \Bohr(S,\rho) \to \R/\Z$ so that 
\begin{equation}\label{b3} 
\tilde \phi(x,\theta) = \phi(x)
\end{equation}
whenever $(x,\theta) \in G_S$ are such that $\|\theta\|_{\R^S} \leq \rho$.  In particular, $\phi$ is locally quadratic on $\Bohr(S,\rho)$. Combining \eqref{b1}, \eqref{b2}, \eqref{b3}, we obtain the claim (setting $\rho' \coloneqq \frac{1}{100} \exp(-C |S|^C) \rho$, say).
\end{proof}

\subsection{Proof of inverse theorem}  

We can now prove Theorem \ref{inverse-u3}.  For the rest of the section, we let $G, f, \eta$ be as in the hypotheses of that theorem.  In this section, we adopt the convention from  \cite{green-tao-inverseu3} of using $b()$ to denote various $1$-bounded functions of the indicated variables, with the functions allowed to vary from occurrence to occurrence.

We now execute some standard arguments from \cite{gowers1}, \cite{green-tao-inverseu3}, \cite{samorodnitsky} in which we obtain an increasing amount of control on various objects relating to $f$.  
We begin with attaching a frequency $Mh$ depending in a locally linear fashion on $h$ to all $h$ in a Bohr set:

\begin{lemma}\label{part-2}  There exists a regular Bohr set $B(S,\rho)$ with $|S| \ll \eta^{-O(1)}$ and $1 \ll \rho \leq 1/8$ and a locally linear function $M \colon \Bohr(S,2\rho) \to \hat G$ such that
$$ \left|\E_{\subalign{x &\in G \\h &\in \Bohr(S,\rho)}} b(h) b(x) f(x+h) e(-Mh \cdot x)\right| \gg \eta^{O(1)}.$$
\end{lemma}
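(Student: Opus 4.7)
The plan is to execute the standard linearization step in the Gowers--Green--Tao proof, which expresses $U^3$-bias as correlation with a family of phases $e(-\xi(h) \cdot x)$ whose frequency $\xi(h)$ depends locally linearly on $h$ over a Bohr set. This step uses only the additive group structure of $G$ and Fourier/Plancherel theory, so it goes through essentially verbatim as in \cite{green-tao-inverseu3}; the $2$-torsion obstructions that distinguish arbitrary $G$ from the odd-order or $\F_2^n$ cases only appear later, when one integrates a symmetric bilinear form (Lemma \ref{integ}).

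First I would apply the recursive identity \eqref{gowers-recurse} at $d=2$ to obtain $\E_{h \in G} \|\Delta^\times_h f\|_{U^2(G)}^4 \geq \eta^8$, and use Markov's inequality to extract a set $H \subset G$ of density $\gg \eta^{O(1)}$ on which $\|\Delta^\times_h f\|_{U^2(G)} \gg \eta^{O(1)}$. Theorem \ref{inverse-u2} then provides, for each $h \in H$, a frequency $\xi(h) \in \hat G$ with $|\E_x f(x) \overline{f(x-h)} e(-\xi(h) \cdot x)| \gg \eta^{O(1)}$. Absorbing the resulting phase into a unimodular $b(h)$, averaging over $h \in H$ (with $\xi$ extended arbitrarily off $H$), and shifting $x \mapsto x+h$ yields
$$ |\E_{x,h \in G}\, 1_H(h) b(h) b(x) f(x+h) e(-\xi(h) \cdot x)| \gg \eta^{O(1)} $$
for suitable $1$-bounded $b(h), b(x)$.

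Next I would square this bound and expand using Plancherel to conclude that $\xi$ is an approximate homomorphism on $H$, in the sense that
$$ \#\{(h_1,h_2,h_3,h_4) \in H^4 : h_1+h_2 = h_3+h_4,\ \xi(h_1)+\xi(h_2) = \xi(h_3)+\xi(h_4)\} \gg \eta^{O(1)} |G|^3. $$
Then I would apply the Balog--Szemer\'edi--Gowers theorem to the graph $\{(h,\xi(h)) : h \in H\} \subset G \times \hat G$ to pass to a subset of size $\gg \eta^{O(1)} |G|$ with doubling $\ll \eta^{-O(1)}$, on which $\xi$ restricts to a Freiman homomorphism of bounded order. Finally, a Bogolyubov--Ruzsa covering argument (as in \cite[Proposition 6.2]{green-tao-inverseu3}) produces a regular Bohr set $\Bohr(S,\rho)$ with $|S| \ll \eta^{-O(1)}$ and $\rho$ bounded below by an absolute constant (which we may take at most $1/8$), together with a locally linear function $M \colon \Bohr(S,2\rho) \to \hat G$ agreeing with $\xi$ on a $\gg \eta^{O(1)}$-fraction of $\Bohr(S,\rho) \cap H$; substituting $M$ for $\xi$ and restricting the $h$-average to $\Bohr(S,\rho)$ loses only a polynomial factor in $\eta$ and yields the claimed bound.

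The main obstacle is this last step: upgrading the approximate-homomorphism property of $\xi$ on a dense subset of $G$ into a genuinely locally linear function on a Bohr set, while retaining the correlation. This is where the bulk of the additive-combinatorial machinery (BSG, Pl\"unnecke--Ruzsa, covering by cosets of Bohr sets) is needed. By contrast, the earlier steps are mechanical applications of Cauchy--Schwarz, Markov, and Plancherel, and work uniformly in $G$.
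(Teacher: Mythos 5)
Your plan is the same as the paper's, which is simply to invoke \cite[Propositions 9.1, 9.3]{green-tao-inverseu3} after removing some factors of~$2$. One small inaccuracy worth flagging: you claim the Green--Tao argument ``goes through essentially verbatim,'' but in fact their Propositions 9.1 and 9.3 implicitly use oddness of $|G|$ at the step where the difference graph $2\Gamma''-2\Gamma''$ over a Bohr set $B_0$ is written as $\{(h,2M(h)):h\in B_0\}$ (i.e., a division by two in $\hat G$); for general $G$ one must instead write that graph directly as $\{(h,M(h)):h\in B_0\}$ and replace $2M$ by $M$ throughout the remainder of their argument. This is a simplification rather than an obstruction --- and you are right that the genuine two-torsion difficulty is deferred to Lemma~\ref{integ} --- but a literal transcription of Green--Tao would hit a division-by-two that silently fails for even $|G|$, so the fix needs to be stated explicitly.
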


\begin{proof} This is a slight modification of \cite[Proposition 9.3]{green-tao-inverseu3}, with some factors of $2$ removed to account for the fact that $|G|$ is no longer required\footnote{This hypothesis was omitted by mistake in the hypotheses of that proposition, as well as in \cite[Proposition 9.1]{green-tao-inverseu3}.} to be odd.  More specifically, one repeats the arguments in \cite[Proposition 9.1]{green-tao-inverseu3}, until one reaches the conclusion that $2\Gamma''-2\Gamma''$ is a graph.  In \cite{green-tao-inverseu3}, the (implicit) assumption that $|G| = |\hat G|$ was odd was used to write the portion of this graph over $B_0$ as\footnote{There is a typo at this step of the proof in \cite{green-tao-inverseu3}, in that $2H''-2H''$ should be $2\Gamma''-2\Gamma''$.} $\{ (h, 2M(h)): h \in B_0 \}$ for some function $M \colon B_0 \to \hat G$.  But for general values of $|G|$, we can instead write this portion as $\{ (h, M(h)): h \in B_0 \}$.  If one then continues the arguments of \cite[Proposition 9.1]{green-tao-inverseu3} and \cite[Proposition 9.3]{green-tao-inverseu3}, replacing all occurrences of $2M$ by $M$, one obtains the claim.  We remark that these arguments rely heavily on tools from additive combinatorics, such as the Balog--Szemer\'edi--Gowers lemma, the Pl\"unnecke inequalities, and the Bogulybov lemma.
\end{proof}

Next, we replace the locally bilinear form $(h,x) \mapsto Mh \cdot x$ with a symmetric locally bilinear form:

\begin{lemma}\label{part-3}  There exists a regular Bohr set $B(S,\rho)$ with $|S| \ll \eta^{-O(1)}$ and $\eta^{O(1)} \ll \rho \leq 1/8$ and a symmetric locally bilinear form $B \colon \Bohr(S,2\rho) \times \Bohr(S,2\rho) \to \R/\Z$ such that
$$ \left|\E_{\subalign{x &\in G\\ h,k &\in \Bohr(S,\rho)}} b(x,h) b(x,k) f(x+h+k) e(-B(h,k))\right| \gg \eta^{O(1)}.$$
\end{lemma}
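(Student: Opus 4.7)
The plan is to derive Lemma~\ref{part-3} from Lemma~\ref{part-2} via a two-stage Cauchy--Schwarz argument, exploiting the local linearity of $M$ on $\Bohr(S,2\rho)$ to convert the bilinear-in-$(h,x)$ phase $Mh\cdot x$ into a symmetric locally bilinear phase $B(h,k)$ in two Bohr-set variables. This closely parallels the symmetrisation step in \cite[\S 10]{green-tao-inverseu3} and \cite[\S 3]{samorodnitsky}, modified so that no division by two is ever required, which is what lets us handle arbitrary finite abelian $G$.

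First I would apply Cauchy--Schwarz in the outer variable $x$ to square the inner average over $h$, doubling $h$ into two copies $h,k \in \Bohr(S,\rho)$ and yielding
\[
\left| \E_{\subalign{x &\in G \\ h,k &\in \Bohr(S,\rho)}} b(h)\overline{b(k)}\, f(x+h)\overline{f(x+k)}\, e\bigl(-(Mh - Mk)\cdot x\bigr) \right| \gg \eta^{O(1)}.
\]
Since $h-k \in \Bohr(S,2\rho)$, local linearity of $M$ simplifies the phase to $e(-M(h-k)\cdot x)$. A change of variables $x \mapsto x-k$ converts the $f$-factors to $\overline{f(x)} f(x+h-k)$ and produces an extra phase $e(M(h-k)\cdot k)$ depending only on the Bohr-set variables. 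Relabelling $\ell = h-k$, and then running a second Cauchy--Schwarz on the resulting estimate (this time doubling $\ell$ and exchanging the roles of the two Bohr-set variables, so as to produce a dual term of the form $e(Mk\cdot h)$ alongside the original $e(Mh\cdot k)$), I obtain after a final substitution $x \mapsto x-h-k$, and after absorbing the linear-in-$x$ phases $e(-Mh\cdot x)$ and $e(-Mk\cdot x)$ into $1$-bounded weights $b(x,h)$ and $b(x,k)$ respectively, an estimate of the form
\[
\left| \E_{\subalign{x &\in G \\ h,k &\in \Bohr(S,\rho)}} b(x,h)\, b(x,k)\, f(x+h+k)\, e\bigl(-(Mh\cdot k + Mk\cdot h)\bigr) \right| \gg \eta^{O(1)}.
\]
Setting $B(h,k) \coloneqq Mh\cdot k + Mk\cdot h$ gives the desired symmetric locally bilinear form on $\Bohr(S,2\rho)\times\Bohr(S,2\rho)$: local bilinearity is inherited from the local linearity of $M$, and symmetry is automatic from the definition.

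Throughout, I would work on a regular Bohr sub-set whose radius and rank are polynomially related to those produced by Lemma~\ref{part-2}, using the Bohr-set regularity inequality (in the style of \cite[\S 8]{green-tao-inverseu3}) to control the averages of Bohr-set indicators that arise after each change of variables and to ensure that each Cauchy--Schwarz step incurs at most a polynomial-in-$\eta$ loss. The main obstacle I anticipate is precisely this bookkeeping: verifying that the compounded losses remain polynomial in $\eta$, and that the Bohr-set parameters at the end lie in the required range $\eta^{O(1)} \ll \rho \leq 1/8$. A secondary, more conceptual point is that, since we cannot divide by two in arbitrary $G$, the symmetric form $B$ must be built as the \emph{sum} $Mh\cdot k + Mk\cdot h$ rather than its average; this is also why we must allow the domain of $B$ to be $\Bohr(S,2\rho)$ rather than $\Bohr(S,\rho)$, giving the room needed for the various change-of-variable manoeuvres.
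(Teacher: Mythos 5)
There is a genuine gap, and it sits exactly where you flagged the "conceptual point." The central claim of your plan — that a second Cauchy--Schwarz, "doubling $\ell$ and exchanging the roles of the two Bohr-set variables," produces the transposed term $e(Mk\cdot h)$ alongside $e(Mh\cdot k)$ — does not hold. Cauchy--Schwarz manipulations applied to the estimate from Lemma~\ref{part-2} only ever produce phases of the form $M(\cdot)\cdot(\cdot)$ in the \emph{original orientation}: the first argument of the dot product is always something $M$ has been applied to, and repeated Cauchy--Schwarz creates more $f$-factors and differences of arguments, never a transposed term. Indeed, after the first Cauchy--Schwarz, change of variables, and the relabelling $k\mapsto -k$, one already reaches an estimate of the shape
\begin{equation*}
\left|\E_{\subalign{x &\in G \\ h,k &\in \Bohr(S,\rho)}} b(x,h)\,b(x,k)\,f(x+h+k)\,e(-Mh\cdot k)\right| \gg \eta^{O(1)},
\end{equation*}
which looks like the conclusion except that $(h,k)\mapsto Mh\cdot k$ is \emph{not} symmetric. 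Converting this into a statement with a symmetric phase requires the separate, nontrivial "symmetry argument'' of \cite[Lemma 9.4]{green-tao-inverseu3}, which shows that $\|Mh\cdot k - Mk\cdot h\|_{\R/\Z}\ll\eta^{-O(1)}\|h\|_S$ for $h,k$ in a suitable Bohr set; your plan omits this step entirely, and without it the passage from $Mh\cdot k$ to any symmetric form is not justified.

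There is also a quantitative error in the proposed definition $B(h,k):=Mh\cdot k + Mk\cdot h$. Once the symmetry argument is in hand, one has $Mh\cdot k \approx Mk\cdot h$, and hence $Mh\cdot k$ is close to the \emph{half} of $Mh\cdot k+Mk\cdot h$, not to the sum itself: replacing $e(-Mh\cdot k)$ by $e(-(Mh\cdot k+Mk\cdot h))$ introduces an error that is essentially $e(-Mh\cdot k)$ again, which is not small and depends jointly on $h$ and $k$ and so cannot be absorbed into $b(x,h)b(x,k)$. The paper's actual mechanism for avoiding division by two in $\R/\Z$ is the "midpoint'' trick: after shrinking the Bohr radius so that $\|Mh\cdot k - Mk\cdot h\|_{\R/\Z}<1/10$, one writes $Mh\cdot k = B(h,k)+\Delta(h,k)$ and $Mk\cdot h = B(h,k)-\Delta(h,k)$ with $\Delta$ small; the midpoint $B$ is then uniquely defined (no two-torsion ambiguity because the two values are close), symmetric, locally bilinear, and close enough to $Mh\cdot k$ that the replacement $e(-Mh\cdot k)\rightsquigarrow e(-B(h,k))$ costs only $O(\eta^C)$. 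This midpoint construction, not the sum, is what makes the argument go through for arbitrary finite abelian $G$.

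A smaller point: the paper does not derive the target estimate by Cauchy--Schwarz from Lemma~\ref{part-2} at all. It instead applies the Bohr-set shifting identity of \cite[Lemma 4.2(ii)]{green-tao-inverseu3} to replace $(x,h)$ by $(x+z,h+y)$, expands $M(h+y)\cdot(x+z)$ by local bilinearity, absorbs all cross terms except $My\cdot z$ into the $b$-weights, and then substitutes $B(y,z)$ for $My\cdot z$. This route keeps a single copy of $f$ throughout, whereas your Cauchy--Schwarz route at minimum passes through products $f(\cdot)\overline{f(\cdot)}$, which would force additional steps (and potentially additional losses) to recover a single $f(x+h+k)$ in the conclusion.

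In short: your reduction to a symmetric locally bilinear phase needs the Green--Tao symmetry argument as an explicit prior step, and the symmetric form must be constructed as the well-defined midpoint of the two nearby values $Mh\cdot k$ and $Mk\cdot h$ in $\R/\Z$, not as their sum.
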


\begin{proof} Let $M$ be the locally linear function from Lemma \ref{part-2}.  We follow the ``symmetry argument'' used to establish \cite[Lemma 9.4]{green-tao-inverseu3}, but without the additional factor of $2$ (and in particular deleting the arguments in the final paragraph devoted to removing this power of $2$) to conclude (after adjusting the Bohr set $\Bohr(S,\rho)$ from Lemma \ref{part-2} appropriately) that
$$ \| M x \cdot z - Mz \cdot x \|_{\R/\Z} \ll \eta^{-O(1)} \| x\|_S$$
whenever $x,z \in \Bohr(S,\rho)$.  In particular, if $\rho' \sim C^{-1} \eta^C \rho$ for a sufficiently large absolute constant $C$ with $\Bohr(S,\rho')$ regular (the existence of which is guaranteed by \cite[Lemma 8.2]{green-tao-inverseu3}), one has
$$ \| M x \cdot z - Mz \cdot x \|_{\R/\Z} < \eta^{C/2} < \frac{1}{10}$$
(say) for $x,z \in \Bohr(S,\rho')$.  We can thus uniquely define a ``midpoint'' $B(x,z) \in \R/\Z$ and a difference $\Delta(x,z) \in [-\eta^C, \eta^C] \subset [-1/10,1/10]$ for $x,z \in \Bohr(S,\rho')$ such that
\begin{equation}\label{max}
M x \cdot z = B(x,z) + \Delta(x,z); \quad M z \cdot x = B(x,z) - \Delta(x,z)
\end{equation}
for $x,z \in \Bohr(S,\rho')$.  One then easily verifies that $B$ is a symmetric locally bilinear form on $\Bohr(S,\rho')$.  From Lemma \ref{part-2} and a change of variables (using \cite[Lemma 4.2(ii)]{green-tao-inverseu3}) we have (for $C$ large enough)
$$ \left|\E_{\subalign{x &\in G \\ h &\in \Bohr(S,\rho)\\ y,z &\in \Bohr(S,\rho')}} b(h+y) b(x+z) f(x+z+h+y) e(-M(h+y) \cdot (x+z))\right| \gg \eta^{O(1)}$$
which after collecting terms can be rearranged as
$$ \left|\E_{\subalign{x &\in G\\ h &\in \Bohr(S,\rho)\\ y,z &\in \Bohr(S,\rho')}} b(x,h,y) b(x,h,z) f(x+h+y+z) e(-My \cdot z)\right| \gg \eta^{O(1)};$$
using \eqref{max} to approximate $My \cdot z$ by $B(y,z)$ and using the triangle inequality, we obtain (for $C$ large enough)
$$ \left|\E_{\subalign{x &\in G\\ h &\in \Bohr(S,\rho)\\ y,z &\in \Bohr(S,\rho')}} b(x,h,y) b(x,h,z) f(x+h+y+z) e(-B(y,z))\right| \gg \eta^{O(1)}.$$
Making the substitution $x' = x+h$ and then pigeonholing in $h$ we conclude that
$$ \left|\E_{\subalign{x' &\in G\\ y,z &\in \Bohr(S,\rho')}} b(x',y) b(x',z) f(x'+y+z) e(-B(y,z))\right| \gg \eta^{O(1)}$$
and the claim follows after a relabeling.
\end{proof}

Now we can prove Theorem \ref{inverse-u3}.  Let $S,\rho$ be as in Lemma \ref{part-3}.  By Lemma \ref{integ} (and \cite[Lemma 8.2]{green-tao-inverseu3}), we can find $\exp(-\eta^{-O(1)}) \rho \ll \rho_1 < \rho/100$ with $\Bohr(S,\rho_1)$ regular, and a locally quadratic function $\phi \colon \Bohr(S,2\rho_1) \to \R/\Z$ such that
\begin{equation}\label{phi-b}
\phi(x+y) = \phi(x)+\phi(y) + B(x,y)
\end{equation}
for all $x,y \in \Bohr(S,\rho_1)$, as well as a globally quadratic lift $\tilde \phi \colon G_S \to \R/\Z$ such that $\tilde \phi(x,\theta) = \phi(x)$ whenever $(x,\theta) \in G_S$ and $\|\theta\|_{\R^S} < 10 \rho_1$.

Let $C$ be a sufficiently large constant, then by another application of \cite[Lemma 8.2]{green-tao-inverseu3} we may find  $\rho_2 \asymp C^{-1} \eta^C \rho_1$ such that $\Bohr(S,\rho_2)$ is regular. From Lemma \ref{part-3} and a change of variables (using \cite[Lemma 4.2(ii)]{green-tao-inverseu3}) we have
$$ \left|\E_{\subalign{x &\in G\\ h,k &\in \Bohr(S,\rho)\\ y &\in \Bohr(S,\rho_1)\\ z &\in \Bohr(S,\rho_2)}} b(x,h+y+z) b(x,k-y) f(x+h+k+z) e(-B(h+y+z,k-y))\right| \gg \eta^{O(1)}.$$
From the locally bilinear nature of $B$, followed by \eqref{phi-b}, we may write
\begin{align*}
 B(h+y+z,k-y) &= B(h,k) - B(h,y) + B(y+z,k) - B(y,y) - B(y,z) \\
 &= B(h,k) - B(h,y) + B(y+z,k) - B(y,y) - \phi(y+z) + \phi(y) + \phi(z)
\end{align*}
so on collecting terms we obtain
$$ \left|\E_{\subalign{x &\in G\\ h,k &\in \Bohr(S,\rho)\\ y &\in \Bohr(S,\rho_1)\\ z &\in \Bohr(S,\rho_2)}} b(x,h,k,y+z) b(x,h,k,y) f(x+h+k+z) e(-\phi(z))\right| \gg \eta^{O(1)}.$$
Making the change of variables $x' \coloneqq x+h+k$ and pigeonholing in $h,k$, we conclude that
$$ \left|\E_{\subalign{x' &\in G\\ y &\in \Bohr(S,\rho_1)\\ z &\in \Bohr(S,\rho_2)}} b(x',y+z) b(x',y) f(x'+z) e(-\phi(z))\right| \gg \eta^{O(1)}.$$
Thus, we have
$$ \left|\E_{\subalign{y &\in \Bohr(S,\rho_1)\\ z &\in \Bohr(S,\rho_2)}} b(x',y+z) b(x',y) f(x'+z) e(-\phi(z))\right| \gg \eta^{O(1)}$$
for $\gg \eta^{O(1)} |G|$ values of $x' \in G$.  Applying  \cite[Lemma 4.4]{green-tao-inverseu3}, we conclude for each such $x'$ that there exists $\xi(x') \in \hat G$ such that
$$ \left|\E_{z \in \Bohr(S,\rho_1)} f(x'+z) e(-\phi(z)-\xi(x') \cdot z)\right| \gg \eta^{O(1)}$$
and Theorem \ref{inverse-u3} follows.

For future reference we observe that in fact we have demonstrated the following slight refinement of Theorem \ref{inverse-u3}:

\begin{theorem}\label{inverse-u3-plus} In the conclusion of Theorem \ref{inverse-u3}, one can also ensure that there exists a globally quadratic lift $\tilde \phi \colon G_S \to \R/\Z$ of $\phi$ such that $\tilde \phi(x,\theta) = \phi(x)$ whenever $(x,\theta) \in G_S$ and $\|\theta\|_{\R^S} < 10 \rho$.
\end{theorem}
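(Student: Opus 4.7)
The key observation is that the refinement is essentially already present in the proof of Theorem \ref{inverse-u3}: the locally quadratic $\phi$ there is produced by invoking Lemma \ref{integ}(ii), which simultaneously constructs a globally quadratic lift $\tilde\phi\colon G_S\to\R/\Z$ with the agreement property $\tilde\phi(x,\theta)=\phi(x)$ on a neighborhood of the zero section. The plan is therefore to re-run the proof of Theorem \ref{inverse-u3} essentially verbatim, and merely track this auxiliary lift $\tilde\phi$ along the way, verifying that the radius parameter matches the one that appears in the conclusion.

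More explicitly, I would proceed as follows. First, apply Lemma \ref{part-2} and Lemma \ref{part-3} to produce a regular Bohr set $\Bohr(S,\rho)$ with $|S|\ll\eta^{-O(1)}$ and $\eta^{O(1)}\ll\rho\le 1/8$, together with a symmetric locally bilinear form $B\colon\Bohr(S,2\rho)\times\Bohr(S,2\rho)\to\R/\Z$ such that the weighted correlation inequality holds. Then apply Lemma \ref{integ}(ii) to $B$; this gives a radius $\exp(-\eta^{-O(1)})\rho\ll\rho_1<\rho/100$ with $\Bohr(S,\rho_1)$ regular (using \cite[Lemma 8.2]{green-tao-inverseu3} to ensure regularity), a locally quadratic function $\phi\colon\Bohr(S,2\rho_1)\to\R/\Z$ with $\phi(x+y)=\phi(x)+\phi(y)+B(x,y)$ on $\Bohr(S,\rho_1)$, and crucially a globally quadratic lift $\tilde\phi\colon G_S\to\R/\Z$ with $\tilde\phi(x,\theta)=\phi(x)$ whenever $(x,\theta)\in G_S$ and $\|\theta\|_{\R^S}<10\rho_1$.

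From this point on, the remainder of the argument in the proof of Theorem \ref{inverse-u3} — the algebraic manipulation using \eqref{phi-b} to rewrite $B(h+y+z,k-y)$, the substitution $x'=x+h+k$, the pigeonholing in $h,k$, and the final application of \cite[Lemma 4.4]{green-tao-inverseu3} — uses only $\phi$ and not $\tilde\phi$, and it produces the frequency function $\xi\colon G\to\hat G$ and the correlation estimate \eqref{exh} with the Bohr set $\Bohr(S,\rho_1)$. Thus the $\rho$ appearing in the statement of Theorem \ref{inverse-u3-plus} is exactly the $\rho_1$ chosen above, so the agreement property of $\tilde\phi$ reads $\tilde\phi(x,\theta)=\phi(x)$ whenever $(x,\theta)\in G_S$ and $\|\theta\|_{\R^S}<10\rho$, which is precisely the refinement asserted.

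There is no genuine obstacle here; the only thing to be careful about is bookkeeping the radius parameters so that the lift constructed by Lemma \ref{integ}(ii) is guaranteed to agree with $\phi$ on a domain large enough (namely a factor of $10$) compared to the Bohr radius that ends up in the final correlation estimate. Since Lemma \ref{integ}(ii) itself is proved by first lifting $B$ via Lemma \ref{ext} and then integrating globally on $G_S$, there is a free choice in the definition of $\phi$ that allows one to ensure the agreement region is as large as desired compared to the ambient Bohr radius $\rho$, at the cost of absorbing absolute constants; so this is the only point that needs mild attention, and it introduces no new difficulties.
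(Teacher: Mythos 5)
Your proposal is correct and is essentially the argument the paper uses: Theorem \ref{inverse-u3-plus} is observed after the proof of Theorem \ref{inverse-u3} with the remark that the globally quadratic lift $\tilde\phi$ produced by Lemma \ref{integ}(ii) (with the agreement radius $10\rho_1$) is already present in that proof, and the remaining steps use only $\phi$ on a Bohr set of radius $\rho_1$, so no new argument is needed beyond the bookkeeping you describe.
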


\section{From locally quadratic functions to nilmanifolds}\label{tonil-sec}

We now prove Theorem \ref{inverse-u3-nil}.  The algebraic constructions here share some similarity with those in \cite{manners-periodic}, in particular taking advantage of the semidirect product construction to convert the outer automorphisms arising from a translation action into inner automorphisms.

We turn to the details. By Theorem \ref{inverse-u3} and the pigeonhole principle, there exists a regular Bohr set $\Bohr(S, \rho)$ with $|S| \ll \eta^{-O(1)}$ and $\exp(-\eta^{-O(1)}) \ll \rho < 1/2$ and $x_0 \in G$ such that
$$ |\E_{h \in \Bohr(S,\rho)} f(x_0+h) e(-\phi(h))| \gg \eta^{O(1)}$$
for some locally quadratic $\phi$, which can be lifted to a globally quadratic function $\tilde \phi \colon G_S \to \R/\Z$ in the sense that $\tilde \phi(x,\theta) = \phi(x)$ whenever $(x,\theta) \in G_S$ and $\|\theta\|_{\R^S} < 10 \rho$.  By \eqref{bohr} we then have
$$\left |\sum_{(h,\theta) \in G_S} f(x_0+h) 1_{\|\theta\|_{\R^S} < \rho } e(-\tilde \phi(h,\theta))\right| \gg \exp(-\eta^{-O(1)}) |G|.$$
Smoothing out the cutoff $1_{\|\theta\|_{\R^S} < \rho }$ at a scale $\exp(-C \eta^{-C})$ for some sufficiently large absolute constant $C$, and using the regularity of the Bohr set, we conclude that
\begin{equation}\label{chao}
\left |\sum_{(h,\theta) \in G_S} f(x_0+h) \varphi(\theta) e(-\tilde \phi(h,\theta))\right| \gg \exp(-\eta^{-O(1)}) |G|
 \end{equation}
for some Lipschitz function $\varphi: \R^S \to \R$ supported on (say) the unit ball with Lipschitz norm at most $\exp(\eta^{-O(1)})$.

For any $(h,\theta) \in G_S$, the map $n \mapsto \tilde \phi(h,\theta+n)$ is a quadratic map from $\Z^S$ to $\R/\Z$. By performing a Taylor expansion of this map and then lifting the coefficients from $\R/\Z$ to $\R$, we may then find a quadratic map $\Phi(h,\theta) \colon \R^S \to \R$ with the property that
\begin{equation}\label{tphn}
\tilde \phi(h,\theta+n) = \Phi(h,\theta)(n) \mod 1
\end{equation}
for all $n \in \Z^S$.  This map is well defined up to an element of $\Poly_{\leq 2}(\Z^S \to \Z)$, and so by abuse of notation one can view $\Phi(h,\theta)$ as lying in the quotient group $\Poly_{\leq 2}(\R^S \to \R) / \Poly_{\leq 2}(\Z^S \to \Z)$.  From the globally quadratic nature of $\tilde \phi$, we see (using \eqref{relate}) that the map $\Phi \colon G_S \to \Poly_{\leq 2}(\R^S \to \R) / \Poly_{\leq 2}(\Z^S \to \Z)$ is a polynomial map, using the the abelian filtration on $G_S$ and the degree $2$ prefiltration 
\begin{align*}
\Poly_{\leq 2}(\R^S \to \R) / \Poly_{\leq 2}(\Z^S \to \Z)
&\geq \Poly_{\leq 1}(\R^S \to \R) / \Poly_{\leq 1}(\Z^S \to \Z) \\
&\geq \Poly_{\leq 0}(\R^S \to \R) / \Poly_{\leq 0}(\Z^S \to \Z) \\
& \geq 0 \geq 0 \dots
\end{align*}
on $\Poly_{\leq 2}(\R^S \to \R) / \Poly_{\leq 2}(\Z^S \to \Z)$.
We then define the map $g \colon G_S \to H(\R^S) / H(\Z^S)$ by defining
$$ g(h,\theta) \coloneqq ( \theta, \Phi(h,\theta) ) \mod H(\Z^S);$$
this is well-defined since $\Phi(h,\theta)$ can be viewed as an element of $\Poly_{\leq 2}(\R^S \to \R)$ modulo an element of $\Poly_{\leq 2}(\Z^S \to \Z)$, and $0 \times \Poly_{\leq 2}(\Z^S \to \Z)$ is a subgroup of $H(\Z^S)$.  For any $n\in \Z^S$, we have
\begin{align*}
 g(h,\theta+n) &= (\theta+n, \Phi(h,\theta+n)) \mod H(\Z^S) \\
 &= (\theta, T^{-n} \Phi(h,\theta+n) ) (n, 0) \mod H(\Z^S)\\
 &= (\theta, T^{-n} \Phi(h,\theta+n) )  \mod H(\Z^S)\\
 &= (\theta, \Phi(h,\theta) )  \mod H(\Z^S)\\
 &= g(h,\theta)
\end{align*}
where we use the fact (from \eqref{tphn}) that $T^{-n} \Phi(h,\theta+n)$ and $\Phi(h,\theta)$ differ by an element of $\Poly_{\leq 2}(\Z^S \to \Z)$.  Thus by the short exact sequence \eqref{exact}, $g$ actually descends to a map $g \colon G \to H(\R^S)/H(\Z^S)$, where we define $H(\R^S), H(\Z^S)$ as in Definition \ref{spef} but using $S$ as the index set instead of $\{1,\dots,N\}$.  If we define the map $F \colon H(\R^S) \to \C$ by
$$ F( \theta,\psi ) \coloneqq \sum_{n \in \Z^S} \varphi(\theta+n) e(\psi(n))$$
for $\theta \in \R^S$ and $\psi \in \Poly_{\leq 2}(\R^S \to \R)$, one can check that
$$ F((\theta,\psi)(m,\lambda)) = F(\theta,\psi)$$
for all $m \in \Z^S$ and $\lambda  \in \Poly_{\leq 2}(\Z^S \to \Z)$, so that $F$ descends to a function on $H(\R^S)/H(\Z^S)$.  From chasing the definitions we see that for every $(h,\theta) \in G_S$ we have
\begin{align*}
F(g(h)) &= F(\theta_0,\Phi(h,\theta_0))\\
&= \sum_{n \in \Z^S} \varphi(\theta_0+n) e(\Phi(h,\theta_0)(n))\\
&= \sum_{n \in \Z^S} \varphi(\theta_0+n) e(\tilde \phi(h,\theta_0+n)) \\
&= \sum_{\theta: (h,\theta) \in G_S} \varphi(\theta) e(\tilde \phi(h,\theta))
\end{align*}
whenever $(h,\theta_0) \in G_S$.  From \eqref{chao} we conclude that
$$
|\E_{h \in G} f(x_0+h) \overline{F}(g(h))| \gg \exp(-\eta^{-O(1)}). 
$$
It is not difficult to verify that $F$ is Lipschitz of norm at most $\exp(\eta^{-O(1)})$.  Since $H(\R^S)/H(\Z^S)$ is isomorphic to $H(\R^N)/H(\Z^N)$ with $N = |S|$, the only remaining task is to show that $g$ is a polynomial map from $G$ to $H(\R^S)/H(\Z^S)$.  As $G_S$ is a finitely generated abelian group, we may write $G_S$ (non-uniquely) as $G_S = \Pi(\Z^D)$ for some lattice $\Z^D$ and surjective homomorphism $\Pi$.  Then the map $\Phi \circ \Pi$ is a polynomial map from $\Z^D$ (with the abelian filtration) to $\Poly_{\leq 2}(\R^S \to \R)/ \Poly_{\leq 2}(\Z^S \to \Z)$ and hence (by lifting Taylor coefficients) can be lifted to a polynomial map $\Psi \colon \Z^D \to \Poly_{\leq 2}(\R^S \to \R)$, and then $(0,\Psi) \colon \Z^D \to H(\R^S)$ is also a polynomial map.  Meanwhile, the map
$$ n \mapsto (\pi_\theta(\Pi(n)), 0)$$
is also easily verified to be a polynomial map, where $\pi_\theta \colon G_S \to \R^S$ is the coordinate projection $\pi_\theta(h,\theta) \coloneqq \theta$.  Multiplying these two maps together (using the Lazard--Leibman theorem, see e.g., \cite[Corollary B.4]{gtz}), we conclude that the map $\tilde g \colon \Z^D \to H(\R^S)$ defined by
$$ \tilde g(n) \coloneqq (\pi_\theta(\Pi(n)), \Psi(n))$$
is also a polynomial map.  On the other hand, by comparing definitions we see that
$$ \tilde g(n) = g(\Pi(n)) \mod H(\Z^S)$$
for all $n \in \Z^D$.  Since $\Pi \colon \Z^D \to G_S$ is surjective, we conclude that $g \colon G_S \to H(\R^S)/H(\Z^S)$ is a polynomial map, and hence on descending to $G$ we conclude that $g \colon G \to H(\R^S)/H(\Z^S)$ as well, as desired.  This concludes the proof of Theorem \ref{inverse-u3-nil}.

\section{Nonstandard formulation}\label{nonst-impl}

We now begin the proof of Theorem \ref{implication}.  As is common in several other approaches to the inverse conjecture (e.g., \cite{gtz}, \cite{candela-szegedy-inverse}), the first step is to translate Conjecture \ref{uk-inverse-nil}  into the language of nonstandard analysis, in order to access tools such as Loeb measure.  The nonstandard analysis formalism we require is summarized in Appendix \ref{nonst} for convenience.

Let $G = \prod_{\n \to p} G_\n$ be a nonstandard finite additive group, that is to say an ultraproduct of standard finite additive groups $G_\n$.   As discussed in \cite[\S 5]{tz-concat}, the standard Gowers uniformity structures on $G_\n$ then induce analogous (external) structures on $G$; in particular, there is an inner product $\langle (f_\omega)_{\omega \in \{0,1\}^d} \rangle_{U^d(G)} \in \C$ one can define for any $d \geq 1$ and any (external) functions $f_\omega \in L^\infty(G)$, with the property that whenever the internal functions $f_\omega \coloneqq \lim_{\n \to p} f_{\omega,\n}$ are ultralimits of uniformly bounded standard functions $f_{\omega,\n} \colon G_\n \to \C$, one has
\begin{equation}\label{flim}
 \langle (\st f_\omega)_{\omega \in \{0,1\}^d} \rangle_{U^d(G)} = \st  \langle (f_\omega)_{\omega \in \{0,1\}^d} \rangle_{{}^* U^d(G)}
 \end{equation}
where the \emph{internal Gowers inner product} $\langle (f_\omega)_{\omega \in \{0,1\}^d} \rangle_{{}^* U^d(G)}$ of the $f_\omega$ is defined to be the nonstandard complex number
$$  \langle (f_\omega)_{\omega \in \{0,1\}^d} \rangle_{{}^* U^d(G)} \coloneqq \lim_{\n \to p}
 \langle (f_{\omega,\n})_{\omega \in \{0,1\}^d} \rangle_{U^d(G_\n)}.$$ 
In particular, we can define an (external) seminorm $\| \cdot \|_{U^d(G)}$ on $L^\infty(G)$ by the formula
$$ \|f\|_{U^d(G)} \coloneqq  \langle (f)_{\omega \in \{0,1\}^d} \rangle_{U^d(G)}^{1/2^d};$$
one can verify from H\"older's inequality that these norms and inner products are well-defined, with
$$  |\langle (f_\omega)_{\omega \in \{0,1\}^d} \rangle_{U^d(G)}| \leq \prod_{\omega \in \{0,1\}^d} \|f_\omega \|_{L^{2^d}(G)}$$
and
$$ \|f\|_{U^d(G)} \leq \|f\|_{L^{2^d}(G)}.$$

We can now formulate

\begin{conjecture}[Inverse theorem for $U^{s+1}(G)$, nonstandard formulation]\label{uk-inverse-nil-nonst}  Let $G$ be a nonstandard finite additive group (with the Loeb probability space structure), let $s \geq 1$ be standard, and let $f \in L^\infty(G)$ be such that $\|f\|_{U^{s+1}(G)} > 0$.  Then there exists a standard degree $s$ filtered nilmanifold $H/\Gamma$, a standard continuous function $F \colon H/\Gamma \to \C$, and an internal polynomial map $g \colon G \to {}^* (H/\Gamma)$ such that
\begin{equation}\label{fF-nil}
 \int_G f \overline{F(\st g)}\ d\mu_G \neq 0.
\end{equation}
\end{conjecture}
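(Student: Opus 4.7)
Since the stated result is a conjecture, my proof proposal focuses on the $s=2$ case (the only case the paper will actually establish in later sections), using Theorem \ref{hk-inverse} as the engine. The plan is to set up a correspondence principle that turns the Loeb space $(G,\mu_G)$ into an ergodic $\Z^\omega$-system, extract via Theorem \ref{hk-inverse} a translational factor $H/\Gamma$ of degree $2$, and then convert the abstract factor map into a concrete internal polynomial map.

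First, I would build the $\Z^\omega$-action. Choose a countable sequence $h_1,h_2,\dots \in G$ of (uniformly random) nonstandard group elements and set $T^{e_i} \colon x\mapsto x+h_i$; because $G$ is abelian these commuting translations extend uniquely to a $\Z^\omega$-action on $(G,\mu_G)$. A routine random-sampling argument in the spirit of \cite{tz-finite} shows that for Loeb-a.e.\ choice of the $h_i$ the resulting action is ergodic, and that the F{\o}lner averages \eqref{bo} over the $\Phi_n$ of \eqref{folner} agree with Gowers-type averages on $G$ in the limit. In particular, a Cauchy--Schwarz computation comparing \eqref{bo-global} with \eqref{flim} yields $\|f\|_{U^3(X)}\ge \|f\|_{U^3(G)}>0$ for the $\Z^\omega$-system $X=(G,\mu_G,T)$.

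Second, apply Theorem \ref{hk-inverse} to this $X$ to produce a filtered locally compact Polish group $H$ of degree $2$, a filtered lattice $\Gamma\le H$, a homomorphism $\phi\colon \Z^\omega\to H$ generating the translation action \eqref{thh}, a $\Z^\omega$-morphism $\Pi\colon G\to H/\Gamma$, and a continuous $F\colon H/\Gamma\to\C$ with $\int_G f\,\overline{F\circ\Pi}\,d\mu_G\ne 0$. To match the formulation of Conjecture \ref{uk-inverse-nil-nonst}, the locally compact $H$ must be replaced by a Lie group so that $H/\Gamma$ is an actual filtered nilmanifold; using the inverse-limit-of-nilmanifolds structure alluded to in the remark after Theorem \ref{hk-inverse}, I would uniformly approximate $F$ by continuous functions factoring through a finite-dimensional nilmanifold quotient $H'/\Gamma'$, which preserves the nonvanishing correlation.

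The main obstacle is the final step: converting the abstract measurable factor $\Pi\colon G\to H/\Gamma$ into an internal polynomial map $g\colon G\to {}^*(H/\Gamma)$ whose standard part equals $\Pi$ almost everywhere. Because $\Pi$ intertwines each $T^{e_i}$ with left multiplication by $\phi(e_i)\in H$, it satisfies the polynomial relation \eqref{relate} of degree $1$ along each direction $h_i$; since the $h_i$ generate $G$ up to Loeb-null sets, one expects a rigidity/stability theorem to upgrade $\Pi$ to a genuine polynomial nilspace morphism. Proving this rigidity statement---that an approximate polynomial map into a degree-$2$ nilmanifold on a nonstandard finite abelian group can be corrected to an exact internal polynomial map, modulo a Loeb-null set---is the technical heart of the argument and is precisely what Section \ref{stab-sec} is set up to supply, paralleling the rigidity step used in \cite{tz-finite}.
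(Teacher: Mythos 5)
Your proposal correctly identifies the overall route (random shifts to build a $\Z^\omega$-system, apply Theorem~\ref{hk-inverse}, approximate by nilmanifold quotients, repair $\Pi$ to an internal polynomial via a rigidity lemma), and this is indeed the paper's route. However, there are two concrete gaps. First, you build the action on the full Loeb space $(G,\mathcal{L}_G,\mu_G)$ and apply Theorem~\ref{hk-inverse} directly to it, but that theorem requires a \emph{separable} $\Z^\omega$-system, and the Loeb $\sigma$-algebra $\mathcal{L}_G$ is not countably generated; you must pass to the separable factor generated by $f$ and its $\Z^\omega$-translates (as the paper does in Proposition~\ref{corr}) before Theorem~\ref{hk-inverse} can be invoked at all. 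Relatedly, your claim of $\|f\|_{U^3(X)}\ge\|f\|_{U^3(G)}$ by ``Cauchy--Schwarz comparing \eqref{bo-global} with \eqref{flim}'' is not a mechanism that produces this comparison; what is actually proven (and actually needed downstream, not just for positivity) is an exact identity between the Host--Kra inner products on the factor and the Gowers inner products on $G$, established by induction on the dimension of the cube together with a Borel--Cantelli sampling argument exploiting the specific F{\o}lner sets \eqref{folner}.

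Second, and more seriously, your explanation of why $\Pi$ can be repaired --- namely, that $\Pi$ intertwines each $T^{e_i}$ with $\phi(e_i)$, hence satisfies ``the polynomial relation \eqref{relate} of degree $1$ along each $h_i$,'' and that ``the $h_i$ generate $G$ up to Loeb-null sets'' --- does not work. The degree-$1$ relations along the countably many random directions say essentially nothing: $H_1=H$ in this setting, so \eqref{relate} with $k=1$ is vacuous, and the countable subgroup generated by the $h_i$ has Loeb measure zero, so no density or generation argument of this kind applies. The input the stability lemma (Lemma~\ref{stab-poly}) actually needs is that $\pi\circ\Pi$ is \emph{almost polynomial}: for every $k$, the image of a $\mu_{\HK^k(G)}$-generic Host--Kra cube under $\pi\circ\Pi$ lands in $\HK^k(\tilde H/\tilde\Gamma)$. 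Establishing this requires the full identity between Gowers and Host--Kra inner products on the factor (the output of the correspondence principle), which lets one rewrite a Loeb integral over $\HK^k(G)$ as a Host--Kra average on $X$ and then push it through the $\Z^\omega$-morphism $\Pi$ to a geometric statement on $H/\Gamma$. This is a genuinely different, and substantially stronger, statement than a degree-$1$ cocycle relation along the generators, and without it the appeal to the rigidity lemma does not go through.
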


We claim that Conjecture \ref{uk-inverse-nil} follows from Conjecture \ref{uk-inverse-nil-nonst} for any given choice of $s$.  It suffices to establish this theorem in the standard universe. Suppose for contradiction that Conjecture \ref{uk-inverse-nil} failed in that universe for some standard $\eta,s$. As is well known (using for instance the theory of Mal'cev bases \cite{malcev}), there are only countably many isomorphism classes of (standard) degree $s$ filtered nilmanifolds.  If we let $H_\n/\Lambda_\n$ be an enumeration of (standard) representatives of these classes, then from the axiom of choice we see that we may obtain a sequence 
$G_\n$ of standard finite additive groups and standard $1$-bounded functions $f_\n \colon G_\n \to \C$ with the property that $\|f_\n\|_{U^{s+1}(G_\n)} \geq \eta$, and furthermore that there does \emph{not} exist a degree $s$ filtered nilmanifold $H/\Gamma = H_{\n'}/\Lambda_{\n'}$ for some $\n' \leq n$, some standard Lipschitz function $F_\n \colon H/\Gamma \to \C$ with norm at most $\n$, and some standard polynomial map $g_\n \colon G_\n \to H/\Gamma$ such that
$$ |\E_{x \in G_\n} f_\n(x) \overline{F_\n(g_\n(x))}| \geq \frac{1}{\n}.$$
Now let $G \coloneqq \prod_{\n \to p} G_\n$ be the ultraproduct of the $G_\n$, and $f \coloneqq \st \lim_{\n \to p} f_\n$ be the standard part of the ultralimit of the $f_\n$.  Then we can endow $G$ with the Loeb probability space structure, and $f \in L^\infty(G)$.  From \eqref{flim} we have
$$ \|f\|_{U^{s+1}(G)} \geq \eta > 0.$$
We may then apply Conjecture \ref{uk-inverse-nil-nonst} to conclude that there exists a standard degree $s$ filtered nilmanifold $H/\Gamma = H_{\n_0}/\Lambda_{\n_0}$, a standard continuous function $F \colon H/\Gamma \to \C$, and an internal polynomial map $g \colon G \to {}^* (H/\Gamma)$ such that \eqref{fF-nil} holds.  By the Stone--Weierstrass theorem, $F$ is the uniform limit of Lipschitz functions, so we may assume without loss of generality that $F$ is Lipschitz.  Observe that $F(\st g) = \st F(g)$. Writing $g = \lim_{\n \to p} g_\n$ as the ultralimit of standard polynomial maps $g_\n \colon G_\n \to H/\Gamma$, we conclude that
$$ f \overline{F(\st g)} = \st \lim_{\n \to p} f_\n \overline{F(g_\n)}$$
and hence by \eqref{loeb}
$$ \st \lim_{\n \to p} \E_{x \in G_\n} f_\n(x) \overline{F(g_\n(x))} \neq 0.$$
In particular, there is a $p$-large set of $\n$ such that
$$ |\E_{x \in G_\n} f_\n(x) \overline{F(g_\n(x))}| \geq \frac{1}{\n}.$$
But this contradicts the construction of the $f_\n$ for $\n$ sufficiently large, giving the claim. 
 
\begin{remark} It is not difficult to also conversely establish that Conjecture \ref{uk-inverse-nil-nonst} follows from Conjecture \ref{uk-inverse-nil} for a given choice of $s$; we leave the details to the interested reader.
\end{remark}

To establish Theorem \ref{implication}, it thus suffices to show that Theorem \ref{hk-inverse} implies
the $s=2$ case of Conjecture \ref{uk-inverse-nil-nonst}.  This is the purpose of the next three sections of the paper.

\section{Correspondence principle}

In this section we create a separable $\Z^\omega$-system associated to the data of Conjecture \ref{uk-inverse-nil-nonst}, whose dynamics are related to the additive combinatorial structure of the internal function $f$ appearing in that conjecture, in the spirit of the Furstenberg correspondence principle \cite{furstenberg1977ergodic}.  To create this correspondence we follow \cite{tz-finite} and rely on random sampling methods, though we will use the language of nonstandard analysis in order to simplify some of the ``epsilon management'' present in the arguments of \cite{tz-finite}.

More precisely, we will establish the following claim.

\begin{proposition}[Correspondence principle]\label{corr}  Let $G$ be a nonstandard finite additive group, and let ${\mathcal F}$ be an at most countable subset of $L^\infty(G)$.  Then there exists an ergodic separable $\Z^\omega$-system $(X,\X, \mu_X, T)$, where $(X,\X,\mu_X)$ is a factor of $(G, {\mathcal L}_G, \mu_G)$ (thus $X=G$ as a set, $\X$ is a subalgebra of ${\mathcal L}_G$, and $\mu_X$ is the restriction of $\mu_G$ to $\X$, and in particular $L^\infty(X) \subset L^\infty(G)$) with the following properties:
\begin{itemize}
\item[(i)] (Equivalence of Host--Kra and Gowers inner products)   The Gowers inner products on $L^\infty(G)$ restrict to the Host--Kra inner products on $L^\infty(X)$.  In other words, for any $d \geq 0$ and any tuple $(f_\omega)_{\omega \in \{0,1\}^d}$ of functions $f_\omega \in L^\infty(X) \subset L^\infty(G)$ one has the identity
\begin{equation}\label{tfo}
\langle (f_\omega)_{\omega \in \{0,1\}^d} \rangle_{U^d(X)} =  \langle (f_\omega)_{\omega \in \{0,1\}^d} \rangle_{U^d(G)}.
\end{equation}
\item[(ii)]  (${\mathcal F}$ is modeled)  We have ${\mathcal F} \subset L^\infty(X)$.  In particular, from \eqref{tfo} one has $\|f\|_{U^d(X)} = \|f\|_{U^d(G)}$ for all $d \geq 1$ and $f \in {\mathcal F}$.
\end{itemize}
\end{proposition}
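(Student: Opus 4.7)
The plan is to emulate the random-sampling correspondence principle of Tao--Ziegler~\cite{tz-finite}, exploiting Loeb measure and the hyperfinite structure of $G$ to replace their explicit $\eps$-management. First I enumerate $\mathcal{F} = \{f_1,f_2,\dots\}$ and construct the $\Z^\omega$-action by choosing a sequence $(g_i)_{i\geq 1}$ of elements of $G$ (to be specified) and setting $T^{e_i}x \coloneqq x+g_i$; by universality this extends uniquely to a homomorphism $\phi\colon \Z^\omega\to G$ with $\phi(e_i)=g_i$ and a measure-preserving action $T^h x \coloneqq x+\phi(h)$ on $(G,\mu_G)$.  I then take $\X$ to be the countably generated $\sigma$-algebra (modulo Loeb-null sets) spanned by all translates $T^h f_n$ with $h\in\Z^\omega$ and $n\geq 1$; this is automatically $T$-invariant, hence $(X,\X,\mu_X,T)$ is a separable $\Z^\omega$-system with $\mathcal{F}\subset L^\infty(X)$, giving (ii) immediately.

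To verify (i), I unfold the Host--Kra inner product via \eqref{bo}--\eqref{bo-global} and apply the change of variables $k_i\coloneqq\phi(h^0_i-h^1_i)$, rewriting
\[
\langle(f_\omega)_{\omega\in\{0,1\}^d}\rangle_{U^d(X)}
= \lim_{n_1,\dots,n_d\to\infty}\E_{h^0_i,h^1_i\in\Phi_{n_i}\,\forall i}\,\E_{x\in G}\prod_{\omega\in\{0,1\}^d}{\mathcal C}^{|\omega|}f_\omega(x+\omega\cdot k),
\]
so that matching the Gowers inner product reduces to the claim that, as $n_1,\dots,n_d\to\infty$, the pushforward of uniform measure on $\prod_i\Phi_{n_i}^2$ under $(h^0_i,h^1_i)_i\mapsto(\phi(h^0_i-h^1_i))_i$ equidistributes on $G^d$ when tested against the countable algebra of product functions built from translates $T^h f_n$. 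Enumerating these countably many required identities into a single list ${\mathcal E}$, the task reduces to producing one sequence $(g_i)$ satisfying every condition in ${\mathcal E}$.

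Such a sequence is produced by randomisation on the Loeb probability space $G^{\N}$ with the $g_i$ drawn independently and uniformly. A second-moment computation (leveraging that $|G|$ is nonstandard and that the F{\o}lner sets satisfy $|\Phi_n|=(n+1)^{2^n}$) shows that any fixed condition in ${\mathcal E}$ holds almost surely, so by countable subadditivity almost every sequence works, and I fix such a sequence. For ergodicity I include a further countable family of conditions in ${\mathcal E}$ that preclude any non-constant element of a dense countable subset of $L^\infty(X)$ from being simultaneously $T^{e_i}$-invariant for all $i$; a generic sequence again satisfies these almost surely. The main obstacle, and the heart of the argument, is the second-moment estimate controlling these random averages: the concentration of a single empirical Gowers-type inner product around its mean must be strong enough to survive the countable union bound, and this is precisely where the hyperfinite structure of $G$, combined with internal Fubini and Cauchy--Schwarz on $G^{\N}\times G^d\times \prod_i\Phi_{n_i}^2$, replaces the explicit finite-sample estimates of \cite{tz-finite}.
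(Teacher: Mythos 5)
Your overall strategy — random sampling of shift directions $(g_i)$, nonstandard analysis to avoid explicit $\eps$-management, Borel--Cantelli-type concentration — is the same approach the paper takes, but there are two genuine gaps in the execution.

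First, the probability space on which you randomize is not the right one, and the distinction matters. You propose to draw the sequence $(g_i)_{i\geq 1}$ "on the Loeb probability space $G^{\N}$ ... independently and uniformly,'' which most naturally reads as the external product $\prod_{j\in\N}(G,\mathcal{L}_G,\mu_G)$. The paper instead uses $\Omega = \prod_{\n\to p}(G_\n^{\N})$, the \emph{ultraproduct of the finite product spaces} $G_\n^{\N}$, and explicitly cautions that $\Omega \neq G^{\N}$ and that Loeb measure does not commute with Cartesian products. This is not a pedantic point: the key sampling computation (showing \eqref{sampling}-type identities almost surely) requires integrating standard parts of internal functions $H(g_1,\dots,g_{2^n})$ that are measurable only in the \emph{larger} $\sigma$-algebra $\mathcal{L}_{G^{2^n}}$, not in the product $\sigma$-algebra $\mathcal{L}_G^{2^n}$. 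On the product measure space, the events you need in your second-moment computation may simply fail to be measurable; on $\Omega$, they are measurable \emph{and} the expectation can be evaluated internally via the identity $\E F(\mathbf{g}_1,\dots,\mathbf{g}_J) = \st\lim_{\n\to p}\E_{g_{1,\n},\dots,g_{J,\n}\in G_\n} F_\n$. Your proof needs this internal/external equivalence to feed the mean-zero assumption into the Borel--Cantelli sum, and it only holds on $\Omega$.

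Second, your ergodicity argument is circular. You propose to append to the list $\mathcal{E}$ of generic conditions "a further countable family of conditions ... that preclude any non-constant element of a dense countable subset of $L^\infty(X)$ from being simultaneously $T^{e_i}$-invariant.'' But $\X$ — and hence any dense countable subset of $L^\infty(X)$ — is defined in terms of the translates $T^h f$, i.e., in terms of the very sequence $(g_i)$ you are trying to choose generically. You cannot formulate these conditions ahead of the randomization. The paper sidesteps this entirely: once part (i) is proved, ergodicity follows for free by applying the $d=1$ case of (i) to an alleged non-trivial invariant mean-zero $F\in L^\infty(X)$, yielding $\int_X |F|^2 = \langle F,F\rangle_{U^1(X)} = \langle F,F\rangle_{U^1(G)} = |\int_G F|^2 = 0$, a contradiction. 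You should deduce ergodicity from (i) rather than trying to enforce it by adding constraints during the sampling step. Finally, the second-moment bound at the heart of the argument is stated only as a goal; the paper's version exploits a specific combinatorial feature of the F{\o}lner sets $\Phi_n$ (the presence, with high probability, of an index $j$ where the coefficient pattern $h^i_{j+i'}=1_{i=i'}$ isolates a fresh $\mathbf{g}_j$), and the summability in $n$ hinges on the probability $(1-(n+1)^{-8})^{\lfloor(2^n - J)/2\rfloor}$ decaying fast enough; this needs to be spelled out for the Borel--Cantelli step to close.
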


\begin{remark} The arguments in \cite[\S 3,4]{tz-finite}, when translated to the nonstandard setting, provide an analogue of this proposition in which $G$ is now a internally finite dimensional vector space over a standard finite field ${\mathbb F}_p$ of prime order, and the group $\Z^\omega$ is replaced by ${\mathbb F}_p^\omega$.  The proposition may also be compared with \cite[Proposition 3.12]{candela-szegedy-inverse}, in which $G$ is replaced by an ultraproduct of CFR coset nilspaces and $X$ is similarly replaced by a nilspace (with no dynamics).  A closely related correspondence principle was also established by Towsner \cite{towsner}. 
\end{remark}

We now prove this proposition.  By splitting the elements of ${\mathcal F}$ into real and imaginary parts we may assume without loss of generality that all the elements of ${\mathcal F}$ are real-valued.  

The idea is as follows.  The shift maps $T^h \colon G \to G$ will be defined on the entire group $G$ by the formula
\begin{equation}\label{action-form}
 T^h(x) \coloneqq x + \sum_{j=1}^\infty h_j \mathbf{g}_j
 \end{equation}
whenever $h = \sum_{j=1}^\infty h_j e_j$ and $x \in G$, for some suitable shifts $\mathbf{g}_j \in G$; note that these sums contain only finitely many non-zero terms.  In order to be able to verify the identity \eqref{tfo}, as well as the ergodicity of the system $(X,\X, \mu_X, T)$, it will be necessary that these shifts $\mathbf{g}_j$ are ``generic'' in a suitable sense relative to the function $f$ and its shifts.  To locate such generic shifts we shall use a probabilistic method, generating the shifts $\mathbf{g}_j$ randomly (i.e., as measurable functions of an (external) probability space $\Omega$) and verifying that these shifts obey the required properties almost surely.   This is analogous to how one can generate a uniquely ergodic system on the circle $\R/\Z$ by choosing a shift $\alpha$ uniformly at random from $\R/\Z$ and then considering the shift map $x \mapsto x+\alpha$, noting that the unique ergodicity property will hold for the resulting system almost surely (since $\alpha$ will almost surely be irrational).

We begin by setting up the probability space $\Omega$.  Write $G = \prod_{\n \to p} G_\n$ as the ultraproduct of finite additive groups $G_\n$.  By the Carath\'eodory extension theorem, for each $\n$ the (standard) product space $G_\n^\N$ of sequences $(g_{j,\n})_{j \in \N}$ can be given the structure of a (standard) probability space (the product of the probability space structures on $G_\n$ arising from the discrete $\sigma$-algebra and uniform probability measure).  One can view these standard probability spaces as modeling a standard sequence of independent uniformly distributed
random elements $g_{j,\n}$ of $G_\n$.  We then let $\Omega \coloneqq \prod_{\n \to p} (G_\n^\N)$ be the ultraproduct of these spaces; by the Loeb measure construction (see Appendix \ref{nonst}), we can view $\Omega$ as an (external) probability space.  We caution that we have the inequality
$$ \Omega = \prod_{\n \to p} (G_\n^\N) \neq \left(\prod_{\n \to p} G_\n\right)^\N = G^\N $$
in general; however there is certainly a measure-preserving map $\pi \colon \Omega \to G^\N$ (giving $G^\N$ the external probability space structure coming from taking the product of the probability spaces $G$) defined by
$$ \pi\left( \lim_{\n \to p} (g_{j,\n})_{j \in \N} \right) \coloneqq \left(\lim_{\n \to p} g_{j,\n}\right)_{j \in \N}$$
that one can easily verify to be a well-defined measure-preserving map by chasing all the definitions.  If one writes $\pi = (\mathbf{g}_j)_{j \in \N}$, one can then view the $\mathbf{g}_j \colon \Omega \to G$ as independent random variables taking values in $G$.  By construction, we see that if $J$ is a standard natural number and $F_\n \colon G_\n^J \to \C$ are a uniformly bounded sequence of standard functions, with limit $F \colon G^J \to \C$ defined by $F \coloneqq \st \lim_{\n \to p} F_n$, then
\begin{equation}\label{fident}
 \E F( \mathbf{g}_1,\dots,\mathbf{g}_J ) = \st \lim_{\n \to p} \E_{g_{1,\n},\dots,g_{J,\n} \in G_\n} F_\n(g_{1,\n},\dots,g_{J,\n}) 
 \end{equation}
 where on the left-hand side the symbol $\E$ denotes expectation with respect to the probability space $\Omega$.

\begin{remark} We caution that the limit function $F \colon G^J \to \C$ in \eqref{fident} is \emph{not} necessarily measurable with respect to the product $\sigma$-algebra ${\mathcal L}_G^J$ of the Loeb $\sigma$-algebra ${\mathcal L}_G$ of $G$; instead, it is merely measurable with respect to the larger $\sigma$-algebra ${\mathcal L}_{G^J}$ formed by applying the Loeb $\sigma$-algebra construction directly to $G^J = \prod_{\n \to p} (G_n^J)$.  To put it another way, the Loeb construction does \emph{not} commute with Cartesian products; see also Appendix \ref{nonst} for further discussion.  However, this lack of measurability will not be a problem in practice since the identity \eqref{fident} holds in all cases of interest.  Roughly speaking, this identity tells us that the random variables $\mathbf{g}_j$ are in fact ``better'' than merely independent copies of a single random variable $\mathbf{g}$ on $G$, because they can measure events in a larger $\sigma$-algebra ${\mathcal L}_{G^J}$ of $G^J$ than just the product $\sigma$-algebra of $J$ copies of ${\mathcal L}_G$.
\end{remark}

We now use these random variables $\mathbf{g}_j$ to define a (random) $\Z^\omega$-system $(X,\mu,T) = (X,\X, \mu_X, T)$ as follows.  We define a shift $T^h \colon G \to G$ for each $h \in \Z^\omega$ by the formula \eqref{action-form}
$$ T^h(x) \coloneqq x + \sum_{j=1}^\infty h_j \mathbf{g}_j$$
whenever $h = \sum_{j=1}^\infty h_j e_j$ and $x \in G$; note that these sums contain only finitely many non-zero terms.  That is to say, each generator $T^{e_j}$ is a shift in the random direction $\mathbf{g}_j$.  As usual we then define $T^h f \coloneqq f \circ T^{-h}$ for any $f \in L^\infty(G)$. It is then easy to verify that $(G, {\mathcal L}_G, \mu_G, T)$ is a $\Z^\omega$-system, but it will not be separable in general.  We therefore restrict attention to the separable factor $(X,\X,\mu_X,T)$ of $(G, {\mathcal L}_G, \mu_G,T)$ generated by ${\mathcal F}$; thus $\X$ is the (complete) $\sigma$-algebra generated by the $T^h f$ for $f \in {\mathcal F}$ and $h \in \Z^\omega$, and $\mu_X$ is the restriction of $\mu_G$ to $\X$.

From construction it is clear that $(X,\X,\mu_X,T)$ is a separable $\Z^\omega$-system (since there are only countably many functions of the form $T^h f$ up to $\mu_X$-almost everywhere equivalence), and
property (ii) of Theorem \ref{corr} is obeyed. It remains to (almost surely) establish ergodicity, as well as property (i).

We begin with (i).  Fix $d \geq 0$.  By homogeneity we may restrict the $f_\omega$ to be $1$-bounded. From H\"older's inequality one easily checks that
$$ \langle (f_\omega)_{\omega \in \{0,1\}^d} \rangle_{U^d(X)} \leq \prod_{\omega \in \{0,1\}^d} \|f_\omega\|_{L^{2^d}(X)}$$
for all $f_\omega \in L^\infty(X)$ (cf., \cite[\S 5.1]{tz-concat}).  Thus, the left-hand side of \eqref{tfo} is jointly continuous in the $f_\omega$ in the $L^{2^d}(X)$ topology.  A similar argument shows that the right-hand side of \eqref{tfo} is also continuous in this topology.  Thus we may restrict the $f_\omega$ to any $L^{2^d}(X)$-dense subset of the unit ball of $L^\infty(X)$.  By the Stone--Weierstrass theorem, an example of such a set is given by polynomial combinations of finitely many of the shifts $T^h f$ with rational coefficients with $f \in {\mathcal F}$.  This set is countable (modulo $\mu_X$-almost everywhere equivalence), so to show that (i) holds almost surely, it suffices to verify (i) for fixed choices of $f_\omega$, each of which is a polynomial combination of finitely many of the $T^h f$ with rational coefficients.  In particular, each $f_\omega$ takes the form
$$ f_\omega(x) = \st F_\omega( x, \mathbf{g}_1,\dots,\mathbf{g}_J )$$
for some standard $J$ and some (deterministic) internal $1$-bounded function $F_\omega \colon G \times G^J \to {}^* \C$.  It thus suffices to establish the identity
\begin{equation}\label{fang}
\langle (\st F_\omega(\cdot,\mathbf{g}_1,\dots,\mathbf{g}_J) )_{\omega \in \{0,1\}^d} \rangle_{U^d(X)} =  \langle (\st F_\omega(\cdot,\mathbf{g}_1,\dots,\mathbf{g}_J) )_{\omega \in \{0,1\}^d} \rangle_{U^d(G)}
\end{equation}
almost surely for any standard $J$ and any deterministic internal $1$-bounded functions $F_\omega \colon G \times G^J \to {}^* \C$ for $\omega \in \{0,1\}^d$.

We establish this by induction on $d$.  For $d=0$ both sides are equal to
$$ \st \E_{x \in G} F_{()}(x, \mathbf{g}_1, \dots, \mathbf{g}_J )$$
so the claim is immediate in this case.  Now suppose that $d \geq 1$ and that the claim has already been proven for $d-1$.  From \eqref{bo-ident}, the left-hand side of \eqref{fang} can be written as
$$ \lim_{n \to \infty} \E_{h^0, h^1 \in \Phi_n}
\langle (\st F_{\omega,h^0 \cdot \mathbf{g},h^1 \cdot \mathbf{g},\mathbf{g}_1,\dots,\mathbf{g}_J} )_{\omega \in \{0,1\}^{d-1}} \rangle_{U^{d-1}(X)}
$$
where
\begin{equation}\label{faag}
F_{\omega,a^0,a^1,g_1,\dots,g_J}(x) \coloneqq F_{\omega,0}(x - a^0, g_1,\dots,g_J)
\overline{F_{\omega,1}}(x - a^1, g_1,\dots,g_J),
\end{equation}
for $\omega \in \{0,1\}^{d-1}$ with $h^j = \sum_{i=1}^\infty h^j_i e_i$, and
$$ h^j \cdot \mathbf{g} \coloneqq \sum_{i=1}^\infty h^j_i \mathbf{g}_i$$
for $j=0,1$.  Note that each $F_{\omega,h^0 \cdot \mathbf{g},h^1 \cdot \mathbf{g},\mathbf{g}_1,\dots,\mathbf{g}_J}(x)$ is a $1$-bounded internal function of $x$ and finitely many of the $\mathbf{g}_i$. Applying the induction hypothesis, we may write the above expression as
$$ \lim_{n \to \infty} \E_{h^0, h^1 \in \Phi_n}
\langle (\st F_{\omega,h^0 \cdot \mathbf{g},h^1 \cdot \mathbf{g},\mathbf{g}_1,\dots,\mathbf{g}_J} )_{\omega \in \{0,1\}^{d-1}} \rangle_{U^{d-1}(G)}.
$$
We can rewrite this as
$$ \lim_{n \to \infty} \st \E_{h^0, h^1 \in \Phi_n} H(h^0 \cdot \mathbf{g}, h^1 \cdot \mathbf{g}, \mathbf{g}_1,\dots,\mathbf{g}_J) $$
where $H$ is the internal Gowers inner product
\begin{equation}\label{haag}
H(a^0,a^1,g_1,\dots,g_J) \coloneqq 
\langle (F_{\omega,a^0,a^1,g_1,\dots,g_J} )_{\omega \in \{0,1\}^{d-1}} \rangle_{{}^* U^{d-1}(G)}.
\end{equation}
Note that $H: G^2 \times G^J \to {}^* \C$ is an internal $1$-bounded function.  We will shortly show that one almost surely has the sampling identity
\begin{equation}\label{sampling}
\lim_{n \to \infty} \st \E_{h^0, h^1 \in \Phi_n} H(h^0 \cdot \mathbf{g}, h^1 \cdot \mathbf{g}, \mathbf{g}_1,\dots,\mathbf{g}_J) =
\st {}^* \E_{a^0, a^1 \in G} H(a^0, a^1, \mathbf{g}_1,\dots,\mathbf{g}_J)
\end{equation}
for any given $1$-bounded internal function $H: G^2 \times G^J \to {}^* \C$, where we recall that ${}^* \E_{a^0,a^1 \in G}$ denotes the internal average over $G \times G$.  Assuming this identity holds for the moment, we can now write the left-hand side of \eqref{fang} as
$$
\st {}^* \E_{a^0, a^1 \in G} H(a^0, a^1, \mathbf{g}_1,\dots,\mathbf{g}_J).$$
But from \eqref{haag}, \eqref{faag}, \eqref{inner-recurse} one has the internal identity
$$
{}^* \E_{a^0, a^1 \in G} H(a^0, a^1, g_1,\dots,g_J) = 
\langle F_\omega(\cdot, g_1,\dots,g_J) \rangle_{{}^* U^d(G)}$$
for any $g_1,\dots,g_J \in G$, as can be seen by first establishing the corresponding identity on each $G_\n$ and then taking ultralimits.  The claim \eqref{fang} now follows from \eqref{flim}.  

It remains to establish \eqref{sampling} almost surely for a given internal function $H$.  If $H(a^0,a^1,g_1,\dots,g_J)$ does not actually depend on the first two inputs $a^0,a^1$ then the claim is trivial.  Subtracting off the internal marginal mean ${}^* \E_{a^0, a^1 \in G} H(a^0, a^1, g_1,\dots,g_J)$ and using linearity, it thus suffices to verify \eqref{sampling} under the additional assumption of vanishing marginal mean
\begin{equation}\label{mean-zero}
{}^* \E_{a^0, a^1 \in G} H(a^0, a^1, g_1,\dots,g_J) = 0
\end{equation}
for all $g_1,\dots,g_J$, in which case our task is to show that
$$
\lim_{n \to \infty} \st \E_{h^0, h^1 \in \Phi_n} H(h^0 \cdot \mathbf{g}, h^1 \cdot \mathbf{g}, \mathbf{g}_1,\dots,\mathbf{g}_J) = 0$$
almost surely.  By the Borel--Cantelli lemma, it suffices to show that
$$ \sum_{n=1}^\infty  \E \st |\E_{h^0, h^1 \in \Phi_n} H(h^0 \cdot \mathbf{g}, h^1 \cdot \mathbf{g}, \mathbf{g}_1,\dots,\mathbf{g}_J)|^2 < \infty.$$
The left-hand side can be expanded as
\begin{equation}\label{sumo}
\sum_{n=1}^\infty  \E_{h^0, h^1,h^2,h^3 \in \Phi_n} \E \st H(h^0 \cdot \mathbf{g}, h^1 \cdot \mathbf{g}, \mathbf{g}_1,\dots,\mathbf{g}_J) \overline{H}(h^2 \cdot \mathbf{g}, h^3 \cdot \mathbf{g}, \mathbf{g}_1,\dots,\mathbf{g}_J)
\end{equation}
Now we take advantage of the specific form \eqref{folner} of the F{\o}lner sequence $\Phi_n$, which gives $h^i = \sum_{j=1}^{2^n} h^i_j e_j$ for some $h^i_j \in \{0,\dots,n\}$ for each $i=0,1,2,3$.  Suppose that there exists an odd number $J < j < 2^n$ such that $h^i_{j+i'} = 1_{i=i'}$ for all $i=0,1,2,3$ and $i'=0,1$.  Then for each $i'=0,1$, the random variable $\mathbf{g}_{j+i'}$ only makes an appearance in the expression
$$ \E \st H(h^0 \cdot \mathbf{g}, h^1 \cdot \mathbf{g}, \mathbf{g}_1,\dots,\mathbf{g}_J) \overline{H}(h^2 \cdot \mathbf{g}, h^3 \cdot \mathbf{g}, \mathbf{g}_1,\dots,\mathbf{g}_J)$$
only through the $h^i \cdot \mathbf{g}$ term, which can be written as the sum of $\mathbf{g}_{j+i'}$ plus another expression not depending on either of the $\mathbf{g}_{j}, \mathbf{g}_{j+1}$.  From \eqref{mean-zero} and a change of variable, we then have
$$ {}^* \E_{g \in G^{2^n}} H(h^0 \cdot g, h^1 \cdot g, g_1,\dots,g_J) \overline{H}(h^2 \cdot g, h^3 \cdot g, g_1,\dots,g_J) = 0$$
(writing $g = (g_1,\dots,g_{2^n})$ and defining the inner products $h^i \cdot g$ in the obvious fashion), and hence by \eqref{fident} 
$$ \E \st H(h^0 \cdot \mathbf{g}, h^1 \cdot \mathbf{g}, \mathbf{g}_1,\dots,\mathbf{g}_J) \overline{H}(h^2 \cdot \mathbf{g}, h^3 \cdot \mathbf{g}, \mathbf{g}_1,\dots,\mathbf{g}_J) = 0.$$
Thus in the expression \eqref{sumo} we may restrict attention to those $h^0,h^1,h^2,h^3$ for which there do \emph{not} exist any odd $J < j < 2^n$ such that $h^i_{j+i'} = 1_{i=i'}$ for all $i=0,1,2,3$ and $i'=0,1$.  For each given $j$, the proportion of $h^0,h^1,h^2,h^3$ for which this property fails is $1 - \frac{1}{(n+1)^8}$, and the events are independent for the $\lfloor \max(\frac{2^n-J}{2},0) \rfloor$ different choices of $j$, and hence we can upper bound \eqref{sumo} by
$$ \sum_{n=1}^\infty \left(1 - \frac{1}{(n+1)^8}\right)^{\lfloor \max(\frac{2^n-J}{2},0) \rfloor}.$$
This sum is easily seen to be finite (bounding $1-\frac{1}{(n+1)^8}$ by $\exp(-\frac{1}{(n+1)^8})$, and the claim follows. This concludes the proof of Proposition \ref{corr}(i).

It remains to establish that $X$ is ergodic.  If this is not the case, then there exists a non-trivial $\Z^\omega$-invariant function $F \in L^\infty(X)$ of mean zero, which we can take to be $1$-bounded.  By the properties of Loeb measure, one can write $F = \st \tilde F$ for some internal $1$-bounded function $\tilde F$.  From the $d=1$ case of Proposition \ref{corr}(i) and \eqref{flim}, \eqref{u1-form}, \eqref{loeb-int} we have
\begin{align*}
\langle F, F \rangle_{U^1(X)} &= \langle F, F \rangle_{U^1(G)} \\
&= \st \langle \tilde F, \tilde F \rangle_{{}^* U^1(G)} \\
&= \st |{}^* \E_{x \in G} \tilde F(x)|^2 \\
&= \left|\int_X F\ d\mu\right|^2 \\
&= 0.
\end{align*}
On the other hand, from \eqref{bo}, \eqref{bo-global} and the hypothesis that $F$ is invariant, we easily compute that
$$\langle F, F \rangle_{U^1(X)}  = \int_X |F|^2\ d\mu.$$
Since $F$ is non-trivial, we obtain the required contradiction.  This completes the proof of Proposition \ref{corr}.

\section{The structure of nilpotent quotient spaces}\label{nilquot}

Theorem \ref{hk-inverse} produces a quotient space $H/\Gamma$ where $H$ is a filtered locally compact group of degree $s=2$, and $\Gamma$ is a filtered lattice of $H$.  This quotient space is then compact, and the Host--Kra spaces $\HK^k(H/\Gamma) \subset (H/\Gamma)^{\{0,1\}^k}$ are similarly compact (this follows readily from the proof of \cite[Lemma E.10]{gt-linear}).  In the language of nilspaces, $H/\Gamma$ in fact has the structure of a compact nilspace; see \cite[Appendix A]{gmv} for further discussion.  

In this section we explore the nilspace structure of $H/\Gamma$ further (``forgetting'' for now about the dynamical structure of the shift $T$), in particular establishing that $H/\Gamma$ is an inverse limit (in the category of compact nilspaces) of nilmanifolds.  Our arguments will not be restricted to the $s=2$ case.  If the $H_i$ were connected then one could argue using tools such as \cite[Theorem 6.2]{candela-szegedy-inverse} to establish this claim, but in our setting we do not have any connectedness properties on $H$ and we will need to rely instead on the general structural theory of nilpotent locally compact groups that are not necessarily connected.

We first need a simple algebraic lemma concerning the ability to ``factor'' Host--Kra groups.  Given two subsets $A,B$ of a multiplicative group $G$, we define the product set $A B \coloneqq \{ a b: a \in A, b \in B \}$.  (In particular, the product $HK$ of two subgroups $H,K$ of $G$ need not be a subgroup if neither of $H,K$ are normal.)

\begin{lemma}[Factoring Host--Kra groups]\label{ghk-split}  Let $G = (G,\cdot)$ be a prefiltered group, and let $H, K$ be prefiltered subgroups of $G$ such that $G_i \subset H_i K_i$ for all $i \geq 0$.  Then $\HK^k(G) \subset \HK^k(H) \HK^k(K)$ for all $k$.
\end{lemma}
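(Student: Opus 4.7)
My plan is to decompose each generator of $\HK^k(G)$ using the hypothesis and then rearrange the resulting word by bookkeeping the commutators, closing the argument with a downward induction on the prefiltration depth.

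From the definition \eqref{go}, two identities fall out immediately: the map $g \mapsto [g]_{\omega_0}$ is a group homomorphism (so $[g_1 g_2]_{\omega_0} = [g_1]_{\omega_0}[g_2]_{\omega_0}$), and since commutators in $G^{\{0,1\}^k}$ are taken coordinatewise and $\omega \geq \omega_0 \vee \omega_0'$ is equivalent to $\omega \geq \omega_0$ and $\omega \geq \omega_0'$,
\[
[\,[g]_{\omega_0},[g']_{\omega_0'}\,] = [\,[g,g']\,]_{\omega_0 \vee \omega_0'}.
\]
Writing an arbitrary $\gamma = \prod_j [g_j]_{\omega^j} \in \HK^k(G)$ with $g_j \in G_{|\omega^j|}$ and factoring $g_j = h_j k_j$ via $G_{|\omega^j|} \subset H_{|\omega^j|}K_{|\omega^j|}$, the first identity gives $\gamma = \prod_j [h_j]_{\omega^j}[k_j]_{\omega^j}$, a word in generators of $\HK^k(H)$ and $\HK^k(K)$.

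I then bubble each $[k_j]_{\omega^j}$ past each subsequent $[h_l]_{\omega^l}$. The resulting commutator $[\,[k_j,h_l]\,]_{\omega^j \vee \omega^l}$ has inner element in $[G_{|\omega^j|},G_{|\omega^l|}] \subset G_{|\omega^j|+|\omega^l|}$, which by the hypothesis lies in $H_{|\omega^j|+|\omega^l|}K_{|\omega^j|+|\omega^l|} \subset H_{|\omega^j \vee \omega^l|}K_{|\omega^j \vee \omega^l|}$ (using $|\omega^j \vee \omega^l| \leq |\omega^j|+|\omega^l|$). So each commutator itself splits as an $H$-generator times a $K$-generator, with the inner element now in the strictly deeper stratum $G_{|\omega^j|+|\omega^l|}$.

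To terminate the rearrangement I introduce the descending chain $\HK^k(G)_{(r)} \coloneqq \langle [g]_{\omega_0} : g \in G_{|\omega_0|+r}\rangle$, observe from the commutator calculation and $[G_i,G_j] \subset G_{i+j}$ that $[\HK^k(G)_{(r)},\HK^k(G)_{(r')}] \subset \HK^k(G)_{(r+r')}$, and run downward induction on $r$ to establish $\HK^k(G)_{(r)} \subset \HK^k(H)_{(r)}\HK^k(K)_{(r)}$. The base case is trivial once $r$ exceeds the nilpotency degree of the prefiltration; the inductive step is exactly the rearrangement above, since every commutator produced lands in $\HK^k(G)_{(r+1)}$ and is dispatched by the inductive hypothesis. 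The main obstacle I anticipate is the degenerate case $|\omega^j \wedge \omega^l| = 0$, in which the depth parameter $r$ does not strictly increase under commutation; there I use a secondary lexicographic induction on the level parameter $k-|\omega_0|$, exploiting that $|\omega^j \vee \omega^l|$ strictly exceeds $\max(|\omega^j|,|\omega^l|)$ in this case.
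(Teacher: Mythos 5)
Your algebraic preliminaries are correct: $g\mapsto[g]_{\omega_0}$ is indeed a homomorphism, the coordinatewise calculation giving $[[g]_{\omega_0},[g']_{\omega_0'}]=[[g,g']]_{\omega_0\vee\omega_0'}$ is right, and the depth chain $\HK^k(G)_{(r)}$ does satisfy the claimed bracket inclusion. However, the termination argument has a genuine gap precisely at the vertex generator $[g]_{(0,\dots,0)}$. If $\omega^j=(0,\dots,0)$ then $\omega^j\wedge\omega^l=0$ automatically, yet $|\omega^j\vee\omega^l|=|\omega^l|=\max(|\omega^j|,|\omega^l|)$: the level does \emph{not} strictly increase, contrary to what your secondary induction needs. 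Concretely, in the critical depth-$0$ step, moving $[k_j]_{(0,\dots,0)}$ (with $k_j\in K_0$) past $[h_l]_{\omega^l}$ produces the commutator term $[[k_j,h_l]]_{\omega^l}$ whose inner element $[k_j,h_l]$ lies in $[K_0,H_{|\omega^l|}]\subset G_{|\omega^l|}$ --- same level $|\omega^l|$ and same depth $r=0$ as $[h_l]_{\omega^l}$ itself. Neither coordinate of your lexicographic order advances, so the recursion as described does not terminate.

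The paper avoids this entirely by a structurally different argument. It invokes the normal form for Host--Kra groups (\cite[Proposition A.5]{gmv}, or \cite[(E.1)]{gt-linear}), writing any $(g_\omega)\in\HK^k(G)$ as an ordered product $\prod_{j=1}^{2^k}[x_{\omega_j}]_{\omega_j}$ with exactly one factor per vertex $\omega_j\in\{0,1\}^k$ and $|\omega_j|$ non-decreasing. Then, filtering $\HK^k(G)$ by the sets $S_i$ of cubes that are trivial at all $\omega$ with $|\omega|<i$, it shows $S_i\subset\HK^k(H)\,S_{i+1}\,\HK^k(K)$ by factoring only the level-$i$ coefficients $x_\omega=y_\omega z_\omega$, setting $P=\prod_{|\omega_j|=i}[y_{\omega_j}]_{\omega_j}$ and $Q=\prod_{|\omega_j|=i}[z_{\omega_j}]_{\omega_j}$, and verifying directly from the coordinates at $|\omega|\leq i$ that $P^{-1}(g_\omega)Q^{-1}\in S_{i+1}$. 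No commutator bookkeeping is needed and termination happens in exactly $k+1$ steps, regardless of the degree of the prefiltration. If you wish to salvage your approach you would first have to collapse the level-$0$ part separately: $[g_1]_0[g_2]_0=[g_1g_2]_0$, and conjugation by $[g]_0$ sends $[g']_{\omega_0'}$ to $[g^{-1}g'g]_{\omega_0'}$ with $g^{-1}g'g\in G_{|\omega_0'|}$ since $[G_0,G_i]\subset G_i$, so a single conjugation moves the merged $[k_0]_0$ to the far right without creating any new level-$0$ terms. But once you organize things this way you have essentially rederived the paper's layer-by-layer peeling.
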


\begin{proof}  For any $0 \leq i \leq k+1$, let $S_i$ denote the set of all elements $(g_\omega)_{\omega \in \{0,1\}^k}$ of $\HK^k(G)$ such that $g_\omega = 1$ whenever $|\omega| < i$.  We claim the inclusion
\begin{equation}\label{si}
 S_i \subset \HK^k(H) S_{i+1} \HK^k(K)
\end{equation}
for all $0 \leq i \leq k$.  Iterating this and noting that $S_0 = \HK^k(G)$ and $S_{k+1} = \{1\}$, we obtain the claim.

It remains to establish \eqref{si}.  Let $(g_\omega)_{\omega \in \{0,1\}^k}$ be an element of $S_i$.  We order the elements of $\{0,1\}^k$ as $\omega_1,\dots,\omega_{2^k}$ in such a way that $|\omega_1|,\dots,|\omega_{2^k}|$ is non-decreasing.  Applying \cite[Proposition A.5]{gmv} (or \cite[(E.1)]{gt-linear}), we may factor
$$ (g_\omega)_{\omega \in \{0,1\}^k} = \prod_{j=1}^{2^k} [x_{\omega_j}]_{\omega_j}$$
as an ordered product for some $x_{\omega_j} \in G_{|\omega_j|}$, where the notation $[x_{\omega_j}]_{\omega_j}$ was defined in \eqref{go}.  Since this product lies in $S_i$, we have $x_{\omega_j} = 1$ whenever $|\omega_j| < i$.  When $|\omega_j|=i$, we may use the hypothesis $G_i \subset H_i K_i$ to factor $x_{\omega_j} = y_{\omega_j} z_{\omega_j}$ where $y_{\omega_j} \in H_i$ and $z_{\omega_j} \in K_i$.  We can then factor
$$ (g_\omega)_{\omega \in \{0,1\}^k} = \left(\prod_{1 \leq j \leq 2^k: |\omega_j|=i} [y_{\omega_j}]_{\omega_j}\right)
(g'_\omega)_{\omega \in \{0,1\}^k} \left(\prod_{1 \leq j \leq 2^k: |\omega_j|=i} [z_{\omega_j}]_{\omega_j}\right)$$
for some $(g'_\omega)_{\omega \in \{0,1\}^k}$ that one readily verifies to lie in $S_{i+1}$.  The claim follows.
\end{proof}

Using this lemma we can replace the filtered group $H$ by a compactly generated filtered group $H'$ without affecting the Host--Kra structure.

\begin{corollary}[Replacing with a compactly generated group]\label{compact}  Let $H$ be a filtered locally compact group of degree $s$, and let $\Gamma$ be a filtered lattice in $H$.  Then there exists a compactly generated open subgroup $H'$ of $H$ (which has the structure of a locally compact filtered group with $H'_i \coloneqq H' \cap H_i$) such that $\Gamma \cap H'$ is a filtered lattice of $H'$ with $H/\Gamma$ and $H'/(H'\cap \Gamma)$ isomorphic as compact topological spaces, with the identification also being a nilspace isomorphism.
\end{corollary}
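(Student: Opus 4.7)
The plan is to replace $H$ by an open, compactly generated subgroup $H'$ that still surjects onto $H/\Gamma$ at every filtration level, and then to transfer the Host--Kra structure using Lemma \ref{ghk-split}. Since $\Gamma_i = \Gamma \cap H_i$ is cocompact in $H_i$ for each $0 \le i \le s$, I first choose compact sets $K_i \subset H_i$ with $K_i \Gamma_i = H_i$. I also fix a relatively compact, symmetric open neighborhood $U$ of the identity in $H$, and let $H'$ be the subgroup of $H$ generated by $U \cup K_0 \cup \cdots \cup K_s$. Then $H'$ is open (it contains $U$) and compactly generated (by $\overline{U} \cup \bigcup_i K_i$).

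Setting $H'_i \coloneqq H' \cap H_i$, the nesting and commutator relations are inherited from $H$, so $(H', (H'_i))$ is a filtered locally compact group. To verify that $\Gamma' \coloneqq \Gamma \cap H'$ is a filtered lattice, I observe that $\Gamma'$ is discrete in $H'$, which carries the subspace topology from $H$. For cocompactness at each level, given $x \in H'_i \subset H_i$, I write $x = k\gamma$ with $k \in K_i \subset H'_i$ and $\gamma \in \Gamma_i$; then $\gamma = k^{-1}x$ lies in both $H'$ and $H_i$, so $\gamma \in \Gamma \cap H'_i$. Hence $K_i (\Gamma \cap H'_i) = H'_i$, which gives the required cocompactness at level $i$.

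For the topological identification, the inclusion $H' \hookrightarrow H$ descends to a continuous map $\iota \colon H'/\Gamma' \to H/\Gamma$. Surjectivity holds since $H = K_0 \Gamma \subset H' \Gamma$, and injectivity holds because $h_1, h_2 \in H'$ with $h_1^{-1} h_2 \in \Gamma$ forces $h_1^{-1}h_2 \in \Gamma'$. Since both quotients are compact Hausdorff, $\iota$ is automatically a homeomorphism.

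The main step, and the only one where the full force of Lemma \ref{ghk-split} enters, will be verifying that $\iota$ is a nilspace isomorphism. I apply that lemma to $G = H$ with the two prefiltered subgroups $H'$ and $\Gamma$: the identity $H_i = H'_i \Gamma_i$ established above yields $\HK^k(H) \subset \HK^k(H') \HK^k(\Gamma)$ for every $k \ge 0$. Applying the componentwise projection $\pi^{\{0,1\}^k}$ and using that $\pi \colon H \to H/\Gamma$ annihilates $\Gamma$ coordinatewise gives
\[
\HK^k(H/\Gamma) = \pi^{\{0,1\}^k}(\HK^k(H)) = \pi^{\{0,1\}^k}(\HK^k(H')) = \HK^k(H'/\Gamma'),
\]
which is exactly the nilspace compatibility of $\iota$. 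Everything else reduces to standard manipulation of cocompact lattices in locally compact groups, so Lemma \ref{ghk-split} is the key input that makes the replacement preserve all the structure in sight.
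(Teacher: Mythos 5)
Your proof is correct and follows essentially the same strategy as the paper: choose compact $K_i$ with $K_i\Gamma_i = H_i$, let $H'$ be an open compactly generated subgroup containing them, verify $H_i = H'_i\Gamma_i$, and then invoke Lemma~\ref{ghk-split} to transport the Host--Kra structure. The only cosmetic difference is that the paper takes the $K_i$ to be precompact \emph{open} (so that $K_0$ alone already forces $H'$ to be open), whereas you take them compact and adjoin an auxiliary open symmetric neighborhood $U$ to guarantee openness---both choices work equally well.
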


\begin{proof}  Since $H_i/\Gamma_i$ is compact and $\Gamma_i$ is discrete, there exists a non-empty precompact open subset $K_i$ of $H_i$ such that $H_i = K_i \Gamma_i$.  If we let $H'$ be the group generated by $\bigcup_{i=0}^s K_i$, then $H'$ is a compactly generated open subgroup of $H$ and $H_i = H'_i \Gamma_i$ for all $i$.  In particular the obvious identification between $H_i/\Gamma_i$ and $H'_i/(H'_i \cap \Gamma_i)$ is a homeomorphism, which implies that $H'_i \cap \Gamma_i$ is cocompact in $H'_i$.  Thus $H' \cap \Gamma$ is a lattice in $H'$.  From Lemma \ref{ghk-split} we have
$$ \HK^k(H) = \HK^k(H') \HK^k(\Gamma).$$
Now we apply the canonical projection $\pi \colon H^{\{0,1\}^k} \to (H/\Gamma)^{\{0,1\}^k} \equiv (H'/H' \cap \Gamma)^{\{0,1\}^k}$ to both sides of this identity.  The image of $\HK^k(H)$ under $\pi$ is $\HK^k(H/\Gamma)$ by definition, while the image of $\HK^k(H') \HK^k(\Gamma)$ is 
$$ \pi(\HK^k(H') \HK^k(\Gamma)) = \pi(\HK^k(H')) \equiv \HK^k(H'/(H' \cap \Gamma)),$$
noting (from the group isomorphism theorems) that the restriction of $\pi$ to $(H')^{\{0,1\}^k}$ agrees (after the obvious identifications) with the projection from $(H')^{\{0,1\}^k}$ to $(H'/H' \cap \Gamma)^{\{0,1\}^k}$.  Thus we have $\HK^k(H/\Gamma) \equiv \HK^k(H'/(H'\cap \Gamma))$ for all $k$, giving the required nilspace isomorphism.
\end{proof}

The advantage of passing to a compactly generated setting is provided by the following variant of the Gleason--Yamabe theorem.

\begin{proposition}[Gleason--Yamabe for compactly generated nilpotent groups]\label{gleason}  Let $H$ be a compactly generated locally compact nilpotent group.  Then $H$ is the inverse limit of Lie groups.  In other words, every neighborhood $U$ of the identity in $H$ contains a compact normal subgroup $N$ such that $H/N$ is isomorphic to a Lie group.
\end{proposition}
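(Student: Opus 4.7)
I would deduce the proposition from the general Gleason--Yamabe theorem together with an induction on the nilpotency class $s$, exploiting the fact that compact subgroups of the centre are automatically normal in all of $H$.

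For the base case $s=1$, $H$ is a compactly generated locally compact abelian group, so the Pontryagin--van Kampen structure theorem gives $H\cong\mathbb R^a\times\mathbb Z^b\times K$ for some compact abelian group $K$. Dualising the filtration of the discrete dual $\widehat K$ by its finitely generated subgroups realises $K$ as the inverse limit of compact abelian Lie groups, and hence every neighborhood of the identity in $H$ contains a compact subgroup $N\subseteq K$ (automatically normal, since $H$ is abelian) with $H/N$ a Lie group.

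For the inductive step with $s\ge 2$, the quotient $H/Z(H)$ inherits compact generation (the image of a compact generating set generates it) and is nilpotent of class $s-1$, so by the inductive hypothesis every neighborhood of the identity in $H/Z(H)$ contains a compact normal subgroup with Lie quotient. Pulling back such a subgroup yields a closed normal subgroup $M\triangleleft H$ with $Z(H)\subseteq M$, with $H/M$ a Lie group, and with $M$ sitting in a prescribed neighborhood of $Z(H)$. The remaining task is to produce a compact normal subgroup $N\triangleleft H$ with $N\subseteq M\cap U$ such that $M/N$ is a compact Lie group; then $H/N$, as an extension of the Lie group $H/M$ by the compact Lie group $M/N$, is itself a Lie group. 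Applying Gleason--Yamabe to the closed abelian group $Z(H)$ produces candidate compact subgroups in any neighborhood; because $Z(H)$ is central, any such compact subgroup is automatically normal in all of $H$, bypassing the usual Gleason--Yamabe difficulty that the compact normal subgroup produced there is only normal in an open subgroup.

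The main obstacle is precisely this upgrade: the standard Gleason--Yamabe theorem only yields compact normal subgroups of an \emph{open} subgroup, and ensuring that $M/N$ is a Lie group (rather than merely a projective limit of Lie groups) requires choosing $N$ coherently across the extension $Z(H)\hookrightarrow M\twoheadrightarrow M/Z(H)$; one must also handle the fact that closed subgroups of compactly generated groups need not themselves be compactly generated, so care is needed to apply the base case correctly to $Z(H)$. The crucial reason the nilpotent hypothesis suffices is the centrality of $Z(H)$, which promotes compact (normal-in-an-open-subgroup) subgroups of $Z(H)$ to genuinely $H$-normal compact subgroups for free, and in combination with the induction on class produces the desired inverse-limit-of-Lie-groups structure on $H$.
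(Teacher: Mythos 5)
The paper does not attempt a proof of this proposition; it simply invokes \cite[Theorem 9]{hlm}, so your proposal goes beyond what the paper actually does. Your base case is fine: compactly generated locally compact abelian groups are pro-Lie by Pontryagin--van Kampen together with dualising the filtration of $\widehat K$ by finitely generated subgroups. The inductive step, however, has a genuine gap, exactly at the point you flag as the ``main obstacle'' and then declare handled. After pulling back a compact normal $N'\subset H/Z(H)$ to $M\supseteq Z(H)$ with $H/M$ Lie, and choosing a compact $N\subseteq Z(H)\cap U$ with $Z(H)/N$ Lie, you still need $M/N$ to be Lie. But $M/N$ sits in the central extension
$$ 1 \longrightarrow Z(H)/N \longrightarrow M/N \longrightarrow M/Z(H) \longrightarrow 1, $$
and $M/Z(H)=N'$ is only known to be compact: the inductive hypothesis for $H/Z(H)$ says nothing about $N'$ itself being a Lie group. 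Indeed, the compact kernels delivered by a pro-Lie structure are typically \emph{not} Lie -- already in your abelian base case they are groups of the form $\Lambda^\perp$, which are duals of possibly infinitely generated discrete groups and so can be solenoids or $p^n\Z_p$. An extension of a compact non-Lie group by a Lie group is not Lie in general (small subgroups of $N'$ lift to small subgroups of $M/N$), so one cannot conclude that $H/N$ is Lie.

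Centrality of $Z(H)$ does exactly what you say -- it guarantees $N\triangleleft H$ for free -- but it does nothing to upgrade $N'$ to a Lie group, and that upgrade is the actual crux. This is presumably why Hofmann, Liukkonen and Mislove prove the stronger statement that \emph{compact extensions} of compactly generated nilpotent groups are pro-Lie: a naive induction on nilpotency class of the kind you sketch runs straight into this difficulty, and one needs either a strengthened inductive hypothesis that carries along the compact extension, or a different decomposition altogether.
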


\begin{proof}  This follows from \cite[Theorem 9]{hlm}.  We remark that this theorem is simpler to prove than the general Gleason--Yamabe theorem \cite{gleason-struct}, \cite{yamabe}, which asserts that any locally compact group contains an open subgroup that is an inverse limit of Lie groups.  The point is that (as shown in \cite{hlm}) the passage to an open subgroup is unnecessary in the compactly generated nilpotent case. 
\end{proof}

\begin{remark} The hypothesis of compact generation in Proposition \ref{gleason} is necessary.  For instance, if $p$ is a prime, the Heisenberg group $\begin{pmatrix} 1 & \Q_p & \Q_p \\ 0 & 1 & \Q_p \\ 0 & 0 & 1 \end{pmatrix}$ over the $p$-adic field $\Q_p$ is locally compact nilpotent, but not the inverse limit of Lie groups, basically because all compact normal subgroups of this group lie in the center $\begin{pmatrix} 1 & 0 & \Q_p \\ 0 & 1 & 0 \\ 0 & 0 & 1 \end{pmatrix}$.  For similar reasons the nilpotency hypothesis is necessary, as can be seen by considering the compactly generated solvable group $\Z \ltimes \Q_p$, where the action of $\Z$ on $\Q_p$ is generated by multiplication by $p$.
\end{remark}

We can combine Corollary \ref{compact} and Proposition \ref{gleason} to approximate continuous functions on a general degree $s$ quotient $H/\Gamma$ by (pullbacks of) continuous functions of degree $s$ nilmanifolds $\tilde H/\tilde \Gamma$.

\begin{corollary}[Approximation by nilmanifolds]\label{nil-approx}  Let $H$ be a filtered locally compact group of degree $s$, and let $\Gamma$ be a filtered lattice in $H$.  Let $F \colon H/\Gamma \to \C$ be a continuous map, and let $\eps>0$.  Then there exists a filtered nilmanifold $\tilde H/\tilde \Gamma$ of degree $s$, a continuous function $\tilde F \colon \tilde H/\tilde \Gamma \to \C$, and a continuous nilspace morphism $\pi \colon H/\Gamma \to \tilde H/\tilde \Gamma$ such that
$$ |F(x) - \tilde F(\pi(x))| \leq \eps$$
for all $x \in H/\Gamma$.
\end{corollary}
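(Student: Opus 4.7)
The plan is to combine Corollary \ref{compact} and Proposition \ref{gleason}: replace $H$ by a compactly generated open subgroup $H'$ (without changing the nilspace $H/\Gamma$), then use Gleason--Yamabe to find a small compact normal subgroup $N \trianglelefteq H'$ with $H'/N$ a Lie group, and define $\tilde H/\tilde\Gamma$ as the induced quotient nilmanifold. The function $\tilde F$ will come from averaging $F$ over the fibers of the projection, which are essentially copies of the compact group $N$, so the approximation property reduces to uniform continuity.

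First I would invoke Corollary \ref{compact} to pass to the compactly generated $H'$, writing $\Gamma' \coloneqq H' \cap \Gamma$ so that $F$ lives on $H'/\Gamma'$. By compactness of $H'/\Gamma'$ and continuity of $F$, there is an open neighborhood $U$ of the identity in $H'$ with $|F(ux\Gamma') - F(x\Gamma')| < \eps$ for all $u \in U$ and $x \in H'$. Since $\Gamma'$ is discrete and closed, the set $\Gamma' \setminus \{1\}$ is closed in $H'$. I then apply Proposition \ref{gleason} to $H'$ to obtain a compact normal subgroup $N \subset U$ of $H'$ with $H'/N$ a Lie group, shrinking $N$ so that, using that $N$ is compact and disjoint from the closed set $\Gamma' \setminus \{1\}$ in the Hausdorff space $H'$, some open neighborhood of $N$ misses $\Gamma' \setminus \{1\}$.

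Next I set $\tilde H \coloneqq H'/N$ with filtration $\tilde H_i \coloneqq H'_i N/N$ and $\tilde\Gamma \coloneqq \Gamma' N/N$ with filtration $\tilde\Gamma_i \coloneqq \Gamma'_i N/N$. The axioms $[\tilde H_i, \tilde H_j] \subset \tilde H_{i+j}$ follow from the corresponding ones for $H'$ together with normality of $N$; the filtration has degree at most $s$ since $H'_i = \{1\}$ for $i > s$; each $\tilde H_i$ is a closed, hence Lie, subgroup of $\tilde H$ (Cartan's closed subgroup theorem); discreteness of $\tilde\Gamma_i$ in $\tilde H_i$ is exactly where the separation from the previous paragraph is needed; and cocompactness of $\tilde\Gamma_i$ in $\tilde H_i$ is inherited from cocompactness of $\Gamma'_i$ in $H'_i$ via the surjection $H'_i/\Gamma'_i \twoheadrightarrow \tilde H_i/\tilde\Gamma_i$. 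Thus $\tilde H/\tilde\Gamma$ is a degree $s$ filtered nilmanifold, and the composition $\pi \colon H/\Gamma \equiv H'/\Gamma' \to \tilde H/\tilde\Gamma$ is a continuous nilspace morphism because the underlying group homomorphism $H' \to H'/N$ is filtration-preserving, hence polynomial.

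Finally I define $\tilde F \colon \tilde H/\tilde\Gamma \to \C$ by averaging along $N$-fibers,
$$\tilde F(xN\tilde\Gamma) \coloneqq \int_N F(xn\Gamma')\, d\mu_N(n),$$
where $\mu_N$ is Haar probability measure on the compact group $N$. Well-definedness reduces to the invariance of $\mu_N$ under left translation and under the conjugation action of $\Gamma'$ (the latter holds because $N$ is compact, so any continuous automorphism of $N$ preserves $\mu_N$). Continuity of $\tilde F$ is routine, and the bound $|F(x\Gamma) - \tilde F(\pi(x\Gamma))| \leq \int_N |F(x\Gamma') - F(xn\Gamma')|\, d\mu_N(n) < \eps$ is immediate from $N \subset U$. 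The main obstacle is verifying that $\tilde\Gamma$ really is a discrete, cocompact filtered lattice in $\tilde H$: requiring just $N \cap \Gamma' = \{1\}$ does not suffice for discreteness of $\Gamma' N/N$ in $H'/N$, and one must invoke the stronger separation between $N$ and $\Gamma' \setminus \{1\}$ arranged above; the remainder of the argument is bookkeeping to ensure the filtration on $H'/N$ descends correctly.
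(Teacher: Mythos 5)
Your proposal is correct and follows essentially the same route as the paper's proof: reduce to a compactly generated group via Corollary \ref{compact}, apply Proposition \ref{gleason} to extract a small compact normal subgroup $N$ with Lie quotient, and descend $F$ to an $N$-invariant approximant on the nilmanifold $(H'/N)/(\Gamma'N/N)$. The only cosmetic difference is that you average $F$ over $N$-fibres via Haar measure, while the paper first reduces to real-valued $F$ and uses $x \mapsto \min_{h\in N} F(hx)$ (a footnote in the paper explicitly notes the Haar-averaging alternative); your extra care in verifying that $\Gamma'N/N$ is a discrete cocompact filtered lattice in $H'/N$ is sound, though the separation you need already follows from $N\cap\Gamma'=\{1\}$ together with the fact that a compact set disjoint from a closed set in a locally compact Hausdorff group admits an $N$-saturated open neighbourhood avoiding it.
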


Another way of expressing this corollary is that every quotient $H/\Gamma$ of a filtered locally compact group of degree $s$ by a filtered lattice is an inverse limit (in the category of compact nilspaces) of degree $s$ filtered nilmanifolds.

\begin{proof}  We may assume without loss of generality that $F$ is real-valued. By Corollary \ref{compact} we may assume without loss of generality that $H$ is compactly generated, and is thus an inverse limit of Lie groups by Proposition \ref{gleason}, which are nilpotent since $H$ is nilpotent.  Since $F$ is continuous on the compact space $H/\Gamma$, it is uniformly continuous, thus there exists an open neighborhood $U$ of the identity in $H$ such that $|F(hx)-F(x)| \leq \eps$ for all $h \in U$.  By Proposition \ref{gleason}, we can thus find a compact normal subgroup $N$ of $H$ in $U$ such that $H/N$ is a nilpotent Lie group. Since $N \subset U$, we have $|F(hx)-F(x)| \leq \eps$ for all $h \in N$.  The map\footnote{One could also average here using the Haar measure on $N$ if desired.} $x \mapsto \min_{h \in N} F(hx)$ is then a continuous $N$-invariant function that lies within $\eps$ of $F$ in the uniform topology.  This function descends to a continuous function on the quotient $(H/N) / (\Gamma N/N)$, which is a filtered nilmanifold by Proposition \ref{gleason}.  The claim follows.
\end{proof}

\section{Concluding the implication of the Gowers norm inverse theorem from the Host--Kra inverse theorem}\label{stab-sec}

We can now conclude the proof of Theorem \ref{implication}.  By the discussion in Section \ref{nonst-impl}, it suffices to show that Conjecture \ref{uk-inverse-nil-nonst} holds for $s=2$.  Let $G, f$ be as in that conjecture.  We invoke 
Proposition \ref{corr} (with ${\mathcal F} = \{f\}$)  to construct an ergodic $\Z^\omega$-system $(X, \mu_X, T)$ obeying the conclusions of that proposition.  In particular
$$ \|f\|_{U^{s+1}(X)} = \|f\|_{U^{s+1}(G)} > 0.$$
Applying Theorem \ref{hk-inverse}, we can find a filtered locally compact group $H$ of degree at most $s=2$, a filtered lattice $\Gamma$ of $H$, a translation action $T_{H/\Gamma} \colon \Z^\omega \to \Aut(H/\Gamma)$ defined by
$$ T_{H/\Gamma}^h(x) \coloneqq \phi(h) x$$
for some group homomorphism $\phi \colon \Z^\omega \to H$, a $\Z^\omega$-morphism $\Pi \colon X \to H/\Gamma$, and a continuous function $F \colon H/\Gamma \to \C$ such that
$$ \int_X f(x) \overline{F(\Pi(x))}\ d\mu_X \neq 0.$$
Applying Corollary \ref{nil-approx} for a sufficiently small $\eps>0$ and using the triangle inequality, we can thus find a filtered nilmanifold $\tilde H/\tilde \Gamma$ of degree $s$, a continuous function $\tilde F \colon \tilde H/\tilde \Gamma \to \C$, and a continuous nilspace morphism $\pi \colon H/\Gamma \to \tilde H/\tilde \Gamma$ such that
\begin{equation}\label{intx}
\int_X f(x) \overline{\tilde F(\pi \circ \Pi(x))}\ d\mu_X \neq 0.
\end{equation}
We now make a key definition:

\begin{definition}  A Loeb-measurable map $\tilde g \colon G \to \tilde H/\tilde \Gamma$ is \emph{almost polynomial} if, for every standard $k \geq 0$, one has
$$ (\tilde g(x_\omega))_{\omega \in \{0,1\}^k} \in \HK^k(\tilde H/\tilde \Gamma)$$
for $\mu_{\HK^k(G)}$-almost all $(x_\omega)_{\omega \in \{0,1\}^k} \in \HK^k(G)$, where $\mu_{\HK^k(G)}$ is Loeb measure on $\HK^k(G)$.
\end{definition}

\begin{lemma}  The map $\pi \circ \Pi \colon G \to \tilde H/\tilde \Gamma$ is almost polynomial.
\end{lemma}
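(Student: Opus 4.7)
The plan is to reduce the statement to the almost polynomial property of $\Pi$ alone, and to establish the latter by identifying two natural probability measures on $G^{\{0,1\}^k}$. Since $\pi \colon H/\Gamma \to \tilde H/\tilde \Gamma$ is a continuous nilspace morphism by Corollary \ref{nil-approx}, its coordinate-wise extension $\pi^{\{0,1\}^k}$ carries $\HK^k(H/\Gamma)$ into $\HK^k(\tilde H/\tilde \Gamma)$. Hence it suffices to show that $(\Pi(y_\omega))_\omega \in \HK^k(H/\Gamma)$ for $\mu_{\HK^k(G)}$-almost every $(y_\omega) \in \HK^k(G)$.

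The main step is to identify the Host--Kra cube measure $\mu^{[k]}_X$ of the $\Z^\omega$-system $(X, \mu_X, T)$ with the Loeb measure $\mu_{\HK^k(G)}$, both regarded as probability measures on $G^{\{0,1\}^k}$. The former is characterized by
\[ \int \prod_\omega f_\omega \, d\mu^{[k]}_X = \lim_{n \to \infty}\E_{h^1,\dots,h^k \in \Phi_n} \int_G \prod_\omega f_\omega\bigl(x + \omega \cdot (\mathbf{g}\cdot h^1, \dots, \mathbf{g}\cdot h^k)\bigr) \, d\mu_G(x), \]
where $\mathbf{g}\cdot h^i \coloneqq \sum_j h^i_j \mathbf{g}_j$. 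Iterating the sampling identity \eqref{sampling} to $k$ parameters by the same inductive Borel--Cantelli argument as in the proof of Proposition \ref{corr}, this limit equals $\int_{G^{k+1}} \prod_\omega f_\omega(x + \omega \cdot h) \, d\mu_G(x) \, d\mu_G^k(h)$ almost surely in the $\mathbf{g}_i$, which is the integral of $\prod_\omega f_\omega$ against $\mu_{\HK^k(G)}$. Approximating the indicator of the closed set $\HK^k(H/\Gamma)$ from above by continuous functions on $(H/\Gamma)^{\{0,1\}^k}$ (via Urysohn), each such continuous function by products of single-coordinate continuous functions (via Stone--Weierstrass), and pulling back through $\Pi^{\{0,1\}^k}$, transfers the equality of inner products to equality of the two measures against $1_{(\Pi^{\{0,1\}^k})^{-1}(\HK^k(H/\Gamma))}$.

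Since $\Pi$ is a $\Z^\omega$-morphism, a direct computation from the defining weak-$\ast$ limit gives $(\Pi^{\{0,1\}^k})_* \mu^{[k]}_X = \mu^{[k]}_{H/\Gamma}$. The latter measure is supported on $\HK^k(H/\Gamma)$, because for any $x_0 \in H/\Gamma$ and $h^1,\dots,h^k \in \Z^\omega$ one has $(T^{\omega \cdot h}x_0)_\omega = \bigl(\prod_i \phi(h^i)^{\omega_i}\,x_0\bigr)_\omega$, and this tuple lies in $\HK^k(H)$ by the generator description \eqref{go} (noting that $\phi(h^i) \in H = H_1$ since the degree $2$ filtration on $H$ satisfies $H_0 = H_1$). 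Combined with the measure identification above, this yields the almost polynomial property for $\Pi$, and composing with $\pi$ completes the proof.

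The main obstacle is the passage from the sampling identity (which is a statement about internal bounded test functions, holding only almost surely in the $\mathbf{g}_i$) to a measure-theoretic equality strong enough to be evaluated against the indicator of $(\Pi^{\{0,1\}^k})^{-1}(\HK^k(H/\Gamma))$. The iteration to $k$ parameters and the subsequent measure-theoretic lift require care, in part because Loeb measure on $G^{\{0,1\}^k}$ does not coincide with the product of Loeb measures on $G$; a secondary point is to verify the support description of $\mu^{[k]}_{H/\Gamma}$ in the possibly non-Lie locally compact setting produced by Theorem \ref{hk-inverse}.
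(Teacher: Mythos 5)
Your proposal follows the same core approach as the paper: use the correspondence identity \eqref{tfo} to transfer the problem to the dynamical side, exploit that $\Pi$ is a $\Z^\omega$-morphism to push the cube structure down to $H/\Gamma$, and observe that the translation-system cube points lie in $\HK^k(H/\Gamma)$ because $\phi(h) \in H_1$. The packaging differs in two respects. You reformulate the argument as an identification of cube measures followed by a push-forward, and pass from single-coordinate test functions to the indicator of $\HK^k(H/\Gamma)$ via Urysohn plus Stone--Weierstrass; the paper instead works directly with the inner-product formulas and uses second countability of $(\tilde H/\tilde \Gamma)^{\{0,1\}^k}$ to reduce at the outset to product open sets $\prod_\omega U_\omega$ disjoint from $\HK^k(\tilde H/\tilde \Gamma)$, which sidesteps the approximation machinery entirely. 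Both routes work, but two points should be tightened. First, you need not re-derive the iterated sampling identity --- that is exactly what Proposition \ref{corr}(i) already gives, so simply invoke \eqref{tfo}. Second, asserting that $\mu^{[k]}_X$ and $\mu_{\HK^k(G)}$ coincide ``as probability measures on $G^{\{0,1\}^k}$'' is stronger than what is true: $\mathcal{L}_{\HK^k(G)}$ is strictly larger than $({\mathcal L}_G)^{\{0,1\}^k}$, and $\mu^{[k]}_X$ is not defined on it. What \eqref{tfo} plus a monotone class argument actually gives is agreement on (the completion of) $\X^{\{0,1\}^k}$; this suffices because $(\Pi^{\{0,1\}^k})^{-1}(\HK^k(H/\Gamma))$ lies there, but the blanket identification should be avoided.
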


\begin{proof}  Fix $k \geq 0$. By the second countable nature of $(\tilde H/\tilde \Gamma)^{\{0,1\}^k}$, it suffices to show that for any open sets $U_\omega \subset \tilde H/\tilde \Gamma$, $\omega \in \{0,1\}^k$ with $\prod_{\omega \in \{0,1\}^k} U_\omega$ disjoint from $\HK^k(\tilde H/\tilde \Gamma)$, that the set
$$ \{ (x_\omega)_{\omega \in \{0,1\}^k} \in \HK^k(G): \pi(\Pi(x_\omega)) \in U_\omega \forall \omega \in \{0,1\}^k \}$$
is $\mu_{\HK^k(G)}$-null, or equivalently that
\begin{equation}\label{varph}
 \int_{\HK^k(G)} \prod_{\omega \in \{0,1\}^k} 1_{\pi^{-1}(U_\omega)}(\Pi(x_\omega))\ d\mu_{\HK^k(G)}((x_\omega)_{\omega \in \{0,1\}^k}) = 0.
 \end{equation}
From \eqref{flim}, we can write the left-hand side of \eqref{varph} as
 $$ \langle (1_{\pi^{-1}(U_\omega)} \circ \Pi)_{\omega \in \{0,1\}^k} \rangle_{U^k(G)}$$
 which by \eqref{tfo} is equal to
 $$ \langle (1_{\pi^{-1}(U_\omega)} \circ \Pi)_{\omega \in \{0,1\}^k} \rangle_{U^k(X)}.$$
Since $\Pi \colon X \to H/\Gamma$ is a $\Z^\omega$-morphism, we may write this as
 $$ \langle (1_{\pi^{-1}(U_\omega)})_{\omega \in \{0,1\}^k} \rangle_{U^k(H/\Gamma)}.$$
By \eqref{bo}, \eqref{bo-global}, it thus suffices to show that
$$ \int_{H/\Gamma} \prod_{\omega \in \{0,1\}^k} T_{H/\Gamma}^{h^{\omega_1}_1 + \dots + h^{\omega_k}_k} 1_{\pi^{-1}(U_\omega)} = 0$$
for any $h^0_1,h^1_1, \dots, h^0_k,h^1_k \in \Z^\omega$.  By \eqref{thh}, it thus suffices to show that
$$ \left( \phi( h^{\omega_1}_1 + \dots + h^{\omega_k}_k ) x \right)_{\omega \in \{0,1\}^k} \not \in \prod_{\omega \in \{0,1\}^k} \pi^{-1}(U_\omega).$$
But the left-hand side lies in $\HK^k(H/\Gamma)$, which is in $(\pi^{\{0,1\}^k})^{-1} (\tilde H/\tilde \Gamma)$, and the claim then follows since  $\prod_{\omega \in \{0,1\}^k} U_\omega$ is disjoint from $\HK^k(\tilde H/\tilde \Gamma)$.
\end{proof}

We now invoke the following stability result:

\begin{lemma}[Stability of polynomials]\label{stab-poly}  If $\tilde g \colon G \to \tilde H/\tilde \Gamma$ is almost polynomial, then there exists an internal polynomial map $g \colon G \to {}^*( \tilde H/\tilde \Gamma )$ such that $\tilde g(x) = \st g(x)$ for $\mu_G$-almost all $x$.
\end{lemma}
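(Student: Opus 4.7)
The plan is a standard two-step transfer in the nonstandard setting: first replace $\tilde g$ by an internal approximation via Loeb--Lusin, then correct the approximation to an exact internal polynomial using a finitary rigidity principle applied on each factor $G_\n$. Since $\tilde H / \tilde \Gamma$ is compact metrizable, Loeb--Lusin yields an internal map $g_0 \colon G \to {}^*(\tilde H / \tilde \Gamma)$ with $\st g_0 = \tilde g$ $\mu_G$-a.e.; write $g_0 = \lim_{\n \to p} g_{0,\n}$ with internal $g_{0,\n} \colon G_\n \to \tilde H/\tilde \Gamma$. It suffices to find, for $p$-a.e.\ $\n$, a polynomial map $g_\n \colon G_\n \to \tilde H / \tilde \Gamma$ that agrees with $g_{0,\n}$ outside a set of density $o(1)$, since $g \coloneqq \lim_{\n \to p} g_\n$ will then be an internal polynomial map (polynomiality being an internal property preserved by ultraproducts) with $\st g = \tilde g$ $\mu_G$-a.e., as required.

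To supply $g_{0,\n}$ with enough structure to invoke a rigidity result, one first propagates the almost polynomial condition from $\tilde g$ to the internal approximation. Let $N \subset G$ be the Loeb-null set where $\tilde g \neq \st g_0$. Parametrizing $\HK^k(G)$ by $(x, h_1, \dots, h_k) \in G^{k+1}$ via $(x_\omega)_\omega = (x + \omega \cdot h)_\omega$ and applying Fubini, the set of cubes with some vertex in $N$ is Loeb-null in $\HK^k(G)$. For every other cube $(x_\omega)_\omega$ one has $(\st g_0(x_\omega))_\omega = (\tilde g(x_\omega))_\omega \in \HK^k(\tilde H / \tilde \Gamma)$ by hypothesis, so by compactness of $\HK^k(\tilde H / \tilde \Gamma)$ the tuple $(g_0(x_\omega))_\omega$ lies within infinitesimal distance of ${}^*\HK^k(\tilde H / \tilde \Gamma)$. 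Transferring to the finite $G_\n$ and diagonalizing over countably many standard $k$, one selects a sequence $\delta_\n \downarrow 0$ such that, for $p$-a.e.\ $\n$ and every $k \leq 1/\delta_\n$, the proportion of cubes in $\HK^k(G_\n)$ sent by $g_{0,\n}$ to distance $\geq \delta_\n$ from $\HK^k(\tilde H/\tilde \Gamma)$ is at most $\delta_\n$.

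The final step invokes a finitary rigidity principle: for each degree-$s$ filtered nilmanifold $\tilde H / \tilde \Gamma$ and each $\eps > 0$, there exist parameters $\delta, k$ so that any map $h \colon G' \to \tilde H / \tilde \Gamma$ from a finite abelian group whose cubes satisfy the corresponding $\delta$-approximate condition up to level $k$ agrees with a genuine polynomial map off a set of density at most $\eps$. This is proved by choosing Mal'cev coordinates for $\tilde H / \tilde \Gamma$ and inducting downward on the filtration layers: at each layer one confronts an approximately-polynomial $\R/\Z$-valued function of the appropriate degree (modulo lower-weight coordinates), which can be straightened using classical Fourier-analytic / Bogolyubov-type arguments together with the inverse theory for $U^d(G')$; for the degree-$2$ case relevant to Theorem \ref{implication}, Theorem \ref{inverse-u3} itself suffices. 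Applying this with parameter $\delta_\n$ supplies the required $g_\n$.

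The main obstacle is the finitary rigidity used in the third step. While stability at each individual layer of the Mal'cev tower is classical (linear in the degree-$1$ case, quadratic in the degree-$2$ case), assembling these corrections up the nonabelian tower without letting errors compound requires careful bookkeeping; what makes this tractable is the fact that the Host--Kra cube condition \eqref{relate} decouples the filtration layers, so that a correction introduced at a high-weight coordinate does not perturb the lower-weight layers that have already been fixed.
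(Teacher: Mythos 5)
The proposal diverges fundamentally from the paper's proof, and the divergence exposes a genuine gap. The paper's proof of Lemma \ref{stab-poly} (Proposition \ref{propstab} in Appendix B) stays entirely inside the nonstandard framework: it inducts on the degree $s$, quotients by the top central group $H_s$ to apply the induction hypothesis, recasts the remaining lifting problem as finding an internal group homomorphism into a groupoid of equivariant fibre maps, and closes the argument with soft tools only (Keisler--Fubini, the ``no small subgroups'' property of Lie groups, underspill, and the cocycle triviality Lemma \ref{cocy-triv}, Lemma \ref{repair}, and Corollary \ref{close}). No Gowers inverse theorem is used anywhere. Your plan instead transfers back to the finite groups $G_\n$ and appeals to a claimed ``finitary rigidity principle'': approximately-cube-consistent maps into a fixed nilmanifold agree with genuine polynomial maps off a small-density set.

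That finitary rigidity principle is the entire content of the lemma, and your sketch does not establish it. Two specific problems. First, you say it is proved ``using... the inverse theory for $U^d(G')$; for the degree-$2$ case... Theorem \ref{inverse-u3} itself suffices.'' But the inverse theorem is a ``1\% structure'' statement (a large $U^{s+1}$ norm yields \emph{some} correlation with a nilsequence); what the rigidity principle requires is a ``99\% structure'' statement (an almost-everywhere approximate polynomial condition forces the map itself to coincide, off a tiny set, with a genuine polynomial map). The former does not straightforwardly imply the latter, and the paper's appendix gives no hint that it could: in the delicate lifting from $H/H_s\Gamma$ back up to $H/\Gamma$, the correction terms are controlled via cocycle triviality and a group-theoretic ``no small subgroups'' argument, not by inverse-theorem correlations. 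The claim that errors from the Mal'cev layers ``do not compound'' because the cube condition ``decouples'' them is asserted, not shown; in the paper this decoupling is exactly what Propositions \ref{second-phi}--\ref{fourth-phi} accomplish, and it takes real work (cocycle trivialization, underspill to get an internal domain, and Lemma \ref{repair} to upgrade an almost-homomorphism on $E$ to a true homomorphism on $G$). Second, even if the finitary principle were true, invoking Theorem \ref{inverse-u3} inside the proof of Lemma \ref{stab-poly} would make Theorem \ref{implication} circular: that theorem is meant to derive the $s=2$ Gowers inverse theorem from the ergodic structure theorem, so its proof cannot presuppose the combinatorial $U^3$ inverse theorem. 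The paper's nonstandard proof is deliberately independent of all of Section \ref{Fourier-sec}.
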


This is a special case of \cite[Theorem 4.2]{candela-szegedy-inverse}, translated to the language of nonstandard analysis; the argument is valid for arbitrary degrees $s$.  For the convenience of the reader we supply a self-contained proof of this result in Appendix \ref{polystab}.  In the special case where $G$ is a vector space over a field ${\mathbb F}_p$ of prime order $p$, this result (phrased in the language of standard analysis) was established in \cite[Lemma 4.5]{tz-finite}.  We remark that the converse implication of Lemma \ref{stab-poly} (that functions that agree almost everywhere with the standard part of an internal polynomial are almost polynomial) is trivial.

Combining these two lemmas, we obtain an internal polynomial map $g \colon G \to {}^*( \tilde H/\tilde \Gamma )$ such that
$$ \pi \circ \Pi(x) = \st g(x)$$
for $\mu_G$-almost all $x$, and \eqref{fF-nil} now follows from \eqref{intx} (recalling that $\mu_X$ is the restriction of $\mu_G$ to $\X$).  This concludes the proof of Theorem \ref{implication}.

\appendix

\section{A review of nonstandard analysis}\label{nonst}

In this appendix we briefly review some basic definitions and tools from nonstandard analysis that we will use in this paper.  This formalism is frequently used in this subject (see e.g., \cite{gtz}, \cite{candela-szegedy-inverse}, \cite{tz-concat}); our formalism here is particularly close to that in \cite[\S 5]{tz-concat}.

We assume the existence of a \emph{standard universe} ${\mathfrak U}$ that contains all the objects implicitly discussed in results such as Theorem \ref{uk-inverse-nil}, which will henceforth be referred to as \emph{standard objects}.  For instance, the group $G$ in Theorem \ref{uk-inverse-nil} can be assumed to be a standard finite additive group, the nilmanifold $H/\Gamma$ can be assumed to be a standard filtered nilmanifold, and so forth.  We will shortly also introduce a nonstandard universe ${}^* {\mathfrak U}$ that contains the nonstandard objects, and refer to \emph{external objects} as objects that do not necessarily lie in either the standard or nonstandard universes.  (For instance, a standard or nonstandard object can either be viewed internally with respect to the universe it lies in, or externally without explicit mention of any universe.)

Throughout this paper we fix an (external) non-principal ultrafilter $p \in \beta \N \backslash \N$, which induces notions of ultraproduct, ultrapower, and ultralimit as defined for instance in \cite[\S 5]{tz-concat}.  Given some standard notion of a space (e.g., a finite abelian group, or a nilmanifold), a \emph{nonstandard space} will be an ultraproduct $X = \prod_{\n \to p} X_\n$ of standard spaces $X_\n$, whose elements are ultralimits $x = \lim_{\n \to p} x_\n$ of standard objects $x_\n \in X_\n$ for a $p$-large set of $\n$.  Thus for instance a nonstandard finite additive group is an ultraproduct $G = \prod_{\n \to p} G_\n$ of standard finite additive groups $G_\n$.  There is an obvious identification
$$ \left(\prod_{\n \to p} X_\n\right) \times  \left(\prod_{\n \to p} Y_\n\right) \equiv \prod_{\n \to p} (X_\n \times Y_\n)$$
for any nonstandard spaces $\prod_{\n \to p} X_\n$, $\prod_{\n \to p} Y_\n$; by abuse of notation, we shall make frequent use of this identification in this paper without further comment, so that the Cartesian product of finitely many nonstandard spaces is again a nonstandard space. (We caution however that such an identification breaks down for infinite Cartesian products.)

By {\L}os's theorem, every structural property of standard spaces that is expressible as a sentence in first-order logic is inherited by their nonstandard counterparts.  For instance, a nonstandard finite additive group will remain an additive group in the external sense, though it will usually not be externally finite.  The ultrapower $\prod_{\n \to p} X$ of a standard space $X$ will be denoted ${}^* X$, and contains $X$ as a subspace via the identification $x \equiv \lim_{\n \to p} x$.  For instance, ${}^* \R$ will denote the nonstandard real numbers (also known as the hyperreals), ${}^* \C$ will denote the nonstandard complex numbers, and all nonstandard objects lie in the nonstandard universe ${}^* {\mathfrak U}$.  The nonstandard reals ${}^* \R$ form an ordered field that contains the standard reals $\R$ as a subfield.

Let $X = \prod_{\n \to p} X_\n$ and $Y = \prod_{\n \to p} Y_\n$ be nonstandard sets.  An \emph{internal function} $f \colon X \to Y$ between these sets is a function that is an ultralimit $f = \lim_{\n \to p} f_\n$ of standard functions $f_\n \colon X_\n \to Y_\n$.  One can similarly define internal homomorphisms between nonstandard groups, or internal polynomial maps between nonstandard prefiltered groups, in the obvious fashion.  Given a standard function $f \colon X \to Y$ between standard sets, we define its nonstandard counterpart ${}^* f \colon {}^* X \to {}^* Y$ between the associated ultrapowers by ${}^* f \coloneqq \lim_{\n \to p} f$.  In some cases we may abuse notation by abbreviating ${}^* f$ as $f$.

A nonstandard complex number $z$ is said to be \emph{bounded} if $|z| \leq C$ for some standard real $C>0$, and \emph{infinitesimal} if $|z| \leq \eps$ for all standard reals $\eps>0$.  We denote infinitesimal complex numbers by $o(1)$.  By a well-known adaptation of the proof of the Bolzano--Weierstrass theorem, we see that to every bounded nonstandard complex number $z$ there is a unique standard complex number $\st(z)$ (known as the \emph{standard part} of $z$) such that $z = \st(z) + o(1)$.  The map $z \mapsto \st(z)$ is a $*$-homomorphism from the external algebra of bounded nonstandard complex numbers to $\C$.  In a similar spirit, given any compact metric space $X$, there is a standard part map $\st \colon {}^* X \to X$ which is the unique map such that ${}^* d(x, \st(x)) = o(1)$ for all $x \in {}^* X$, where ${}^* d \colon {}^* X \times {}^* X \to {}^* \R$ is the nonstandard counterpart of the metric $d \colon X \times X \to \R$.  In fact the standard part map is well defined for any compact \emph{metrizable} space, since one easily checks that any two metrics that generate the same topology also generate the same standard part map.

An important construction for us will be the \emph{Loeb measure construction} \cite{loeb}.  Given a sequence $(X_\n, \X_\n, \mu_\n)$ of standard probability spaces, this construction yields an (external) complete probability space $(X, \X, \mu_X)$ where $X = \prod_{\n \to p} X_\n$ is the ultraproduct of the $X_\n$, with the following two properties:
\begin{itemize}
\item[(i)] Whenever the internal function $f = \lim_{\n \to p} f_\n$ is the ultralimit of uniformly bounded standard measurable functions $f_\n \colon X_\n \to \C$, the function $\st f\colon X \to \C$ is bounded and measurable in $(X,\X,\mu)$, and
\begin{equation}\label{loeb}
\int_X \st f\ d\mu_X = \st \int_X f\ d{}^* \mu_X
\end{equation}
where the \emph{internal integral} $\int_X f\ d{}^* \mu_X$ of $f$ is defined by the formula
$$ \int_X f\ d{}^* \mu_X \coloneqq \lim_{\n \to p} \int_{X_\n} f_\n\ d\mu_\n.$$
\item[(ii)]  Conversely, whenever\footnote{Here we adopt the usual convention of identifying two elements on $L^\infty(X)$ whenever they agree $\mu_X$-almost everywhere.} $f \in L^\infty(X) = L^\infty(X,\mu_X)$, there exists a sequence $f_\n \colon X_\n \to \C$ of uniformly bounded standard measurable functions such that $f = \st \lim_{\n \to p} f_\n$ $\mu_X$-almost everywhere.  
\end{itemize}
See for instance \cite[\S 5]{tz-concat} for the construction of Loeb measure (based on the Carath\'eodory extension theorem).  One can then define the Lebesgue spaces $L^p(X,\mu_X)$ for $1 \leq p \leq \infty$ in the usual fashion.  We caution that these measure spaces will not be separable in general.

As a key special case of the Loeb measure construction, if $G$ is a nonstandard finite additive group, thus $G=\prod_{\n \to p} G_\n$ is the ultraproduct of standard finite additive groups $G_\n$, then one can give $G$ the structure of an (external) probability space $(G, {\mathcal L}_G, \mu_G)$, by endowing each of the standard finite groups $G_\n$ with the discrete $\sigma$-algebra and uniform measure, and applying the Loeb construction; we refer to $(G,{\mathcal L}_G,\mu_G)$ as the \emph{Loeb probability space} associated with $G$, with ${\mathcal L}_G$ the \emph{Loeb $\sigma$-algebra} and $\mu_G$ the \emph{Loeb measure} on $G$.  Thus for instance if the internal function $f = \lim_{\n \to p} f_\n$ is the ultralimit of uniformly bounded standard functions  $f_\n \colon G_\n \to \C$, then
\begin{equation}\label{loeb-int}
\int_G \st f\ d\mu_G = \st {}^* \E_{x \in G} f(x)
\end{equation}
where the \emph{internal average} ${}^* \E_{x \in G} f(x)$ of $f$ in $G$ is defined as
$$ {}^* \E_{x \in G} f(x) \coloneqq \lim_{\n \to p} \E_{x \in G_\n} f_\n(x).$$
Because counting measure on each $G_\n$ is translation-invariant, Loeb measure on $G$ is also translation-invariant.  Thus Loeb measure $\mu_G$ is very analogous to Haar measure, though we caution that Loeb measure is not actually a special case of Haar measure since $G$ does not have the structure of a locally compact group (and ${\mathcal L}_G$ is not a Borel $\sigma$-algebra).

If $G, H$ are nonstandard non-empty finite sets, then we can construct Loeb measure spaces $(G, {\mathcal L}_G, \mu_G)$, $(H, {\mathcal L}_H, \mu_H)$ on the nonstandard sets $G,H$, as well as a Loeb measure space $(G \times H, {\mathcal L}_{G \times H}, \mu_{G \times H})$ on the product space $G \times H$.  We caution that in general, the Loeb measure space is \emph{not} the product $(G \times H, {\mathcal L}_G \times {\mathcal L}_H, \mu_G \times \mu_H)$ of the individual Loeb measure spaces $(G, {\mathcal L}_G, \mu_G)$, $(H, {\mathcal L}_H, \mu_H)$, even after completing that product measure; see \cite[Remark 2.10.4]{tao-hilbert}.  Instead, the former space is an \emph{extension} of the latter: ${\mathcal L}_G \times {\mathcal L}_H$ is a subalgebra of ${\mathcal L}_{G \times H}$, and $\mu_G \times \mu_H$ is the restriction of $\mu_{G \times H}$ to ${\mathcal L}_G \times {\mathcal L}_H$.  Furthermore, we have the pleasant fact that the familiar Fubini theorem for $\mu_G \times \mu_H$ extends to $\mu_{G \times H}$:

\begin{lemma}[Keisler--Fubini theorem]\label{fubini}  Let $G, H$ be nonstandard non-empty finite sets.
\begin{itemize}
    \item[(i)] If $f \colon G \times H \to \C$ is a ${\mathcal L}_{G \times H}$-measurable bounded function, then
\begin{align*}
\int_{G \times H} f(g,h)\ d\mu_{G \times H}(g,h) &= \int_G \left(\int_H f(g,h)\ d\mu_H(h)\right)\ d\mu_G(g) \\
&= \int_H \left(\int_G f(g,h)\ d\mu_G(g)\right)\ d\mu_H(h).
\end{align*}
    \item[(ii)]  If $P(g,h)$ is an ${\mathcal L}_{G \times H}$-measurable property of elements $(g,h)$ of $G \times H$ (i.e., $\{ (g,h) \in G\times H: P(g,h) \hbox{ holds}\}$ is measurable in ${\mathcal L}_{G \times H}$, then $P(g,h)$ holds for $\mu_{G \times H}$-almost all $(g,h) \in G \times H$ if and only if, for $\mu_G$-almost all $g \in G$, $P(g,h)$ holds for $\mu_H$-almost all $h \in H$.
\end{itemize}
Similarly with the roles of $G$ and $H$ reversed.
\end{lemma}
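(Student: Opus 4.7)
The plan is to establish part (i) first for internal bounded functions by a direct reduction to classical Fubini on the standard factors, then extend to all bounded $\mathcal{L}_{G\times H}$-measurable functions by sandwiching Loeb-measurable sets between internal ones, and finally deduce part (ii) as the indicator-function case of (i).

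For an internal bounded function $F = \lim_{\n \to p} f_\n \colon G \times H \to {}^*\C$, classical Fubini on each standard factor $G_\n \times H_\n$ (with uniform probability measure) gives, after taking ultralimits, the internal identity ${}^*\E_{(g,h) \in G \times H} F(g,h) = {}^*\E_{g \in G} {}^*\E_{h \in H} F(g,h)$. I then apply the Loeb integral identity~\eqref{loeb} three times to convert this into the Fubini identity for $\st F$: once on the Loeb space $(G \times H, \mathcal{L}_{G \times H}, \mu_{G \times H})$ to interpret the left-hand side as $\int_{G \times H} \st F\, d\mu_{G\times H}$; once pointwise in $g \in G$ on $H$ to identify $\st\bigl({}^*\E_{h \in H} F(g, h)\bigr)$ with $\int_H \st F(g, \cdot)\, d\mu_H$; and once on $G$ applied to the internal function $g \mapsto {}^*\E_{h \in H} F(g, h)$ to convert the outer internal average ${}^*\E_g$ into $\int_G$. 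Interchanging the roles of $G$ and $H$ gives the reversed order.

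To pass from internal functions to a general bounded $\mathcal{L}_{G \times H}$-measurable $f$, it suffices to treat indicators $f = 1_E$ with $E \in \mathcal{L}_{G\times H}$ and then extend by linearity and bounded convergence. By the Carath\'eodory construction of Loeb measure, each such $E$ admits, for every $k \in \N$, an internal sandwich $A_k \subset E \subset B_k$ with $\mu_{G\times H}(B_k \setminus A_k) < 1/k^2$. The internal case applied to $1_{B_k \setminus A_k}$, combined with Chebyshev's inequality, shows that the set $\{g \in G : \mu_H((B_k \setminus A_k)_g) > 1/k\}$ has $\mu_G$-measure at most $1/k$; since these bounds are summable, Borel--Cantelli yields that for $\mu_G$-almost every $g$, the sections $(A_k)_g \subset E_g \subset (B_k)_g$ pinch $E_g$ into $\mathcal{L}_H$ with $\mu_H(E_g) = \lim_k \mu_H((A_k)_g)$. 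The function $g \mapsto \mu_H(E_g)$ is then the a.e.\ limit of the $\mathcal{L}_G$-measurable functions $g \mapsto \mu_H((A_k)_g)$ (to which the internal case already applies), hence is itself $\mathcal{L}_G$-measurable, and the Fubini identity for $1_E$ follows by passing to the limit in the internal identities for $1_{A_k}$.

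Part (ii) is then (i) applied to $1_{\{(g,h): P(g,h) \text{ fails}\}}$: the double integral vanishes precisely when the inner integral vanishes $\mu_G$-almost everywhere, which is the stated equivalence. The main obstacle is the approximation argument of the third paragraph: one must simultaneously obtain $\mu_H$-a.e.\ agreement of the sections $E_g$ with the sandwich sections for $\mu_G$-almost every $g$, and $\mathcal{L}_G$-measurability of the map $g \mapsto \mu_H(E_g)$, both of which are handled by the Borel--Cantelli bookkeeping against the quadratic tail $1/k^2$. Everything else reduces either to classical Fubini on a standard factor $G_\n \times H_\n$ or to the defining property~\eqref{loeb} of Loeb measure.
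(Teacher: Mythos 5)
The paper does not actually prove part (i); it simply cites Theorem~2.10.3 of the book \emph{Hilbert's fifth problem and related topics} and then observes that part (ii) follows by applying (i) to the indicator function of the relevant set. You instead give a self-contained proof of (i), which is a genuinely different (and more informative) route. Your high-level plan --- reduce to the internal case via the transfer of classical Fubini on the standard factors $G_\n \times H_\n$ and three applications of the Loeb integral identity, then bootstrap to general bounded $\mathcal L_{G\times H}$-measurable $f$ via internal sandwiching of Loeb sets, then read off (ii) as the indicator case --- is exactly the standard proof of Keisler--Fubini, and your treatment of (ii) agrees with the paper's.

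However, there is a genuine arithmetic slip in the sandwiching step. You pick internal $A_k \subset E \subset B_k$ with $\mu_{G\times H}(B_k\setminus A_k) < 1/k^2$ and apply Chebyshev at threshold $1/k$ to the function $g \mapsto \mu_H\bigl((B_k\setminus A_k)_g\bigr)$; this gives $\mu_G\{g : \mu_H((B_k\setminus A_k)_g) > 1/k\} < 1/k$, and these bounds are \emph{not} summable, so Borel--Cantelli does not apply as you have set it up. The fix is routine: either take the sandwich error to be $1/k^3$ (or any $1/k^{2+\varepsilon}$, or $2^{-k}$), so that Chebyshev yields bad sets of summable measure $1/k^2$; or replace Borel--Cantelli altogether by first monotonizing ($A_k' \coloneqq A_1 \cup\dots\cup A_k$, $B_k' \coloneqq B_1 \cap\dots\cap B_k$, still internal), so that $g \mapsto \mu_H((B_k'\setminus A_k')_g)$ decreases pointwise in $k$ with $\int_G \to 0$, and hence tends to zero $\mu_G$-a.e.\ by monotone convergence. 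With that repair the rest of your argument --- the limit identification $\mu_H(E_g) = \lim_k \mu_H((A_k)_g)$ for $\mu_G$-a.e.\ $g$, the resulting $\mathcal L_G$-measurability of $g \mapsto \mu_H(E_g)$, the extension to bounded $f$ by linearity and bounded convergence, and the deduction of (ii) --- goes through correctly.
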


In this paper we will only need part (ii) of this lemma; part (i) is only stated here in order to prove part (ii).

\begin{proof} For part (i), see for instance \cite[Theorem 2.10.3]{tao-hilbert}.  As with the usual Fubini theorem, one can weaken ``bounded'' here to ``absolutely integrable'' (and there is a Tonelli-type variant in which $f$ instead takes values in $[0,+\infty]$), but we will not need these extensions of this theorem here.  Part (ii) follows by applying part (i) to the indicator function of $P$.
\end{proof}

\section{Proof of stability of polynomials}\label{polystab}

In this appendix we establish Lemma \ref{stab-poly}.  We need some preparatory lemmas.  First, we show that an ``almost homomorphism'' can always be uniquely completed to an actual homomorphism.

\begin{lemma}[Repairing an almost homomorphism]\label{repair} Let $G = (G,\cdot), K = (K,\cdot)$ be nonstandard  groups with $G$ nonstandard finite, and let $\phi \colon G \to K$ be an internal map which is an ``almost homomorphism'' in the sense that $\phi(gh) = \phi(g) \phi(h)$ for $\mu_{G^2}$-almost all $(g,h) \in G^2$.  Then there exists a unique internal homomorphism $\tilde \phi \colon G \to K$ such that $\phi(g) = \tilde \phi(g)$ for $\mu_G$-almost all $g \in G$.  Finally, if $\phi$ takes values in an external subgroup $K'$ of $K$, then so does $\tilde \phi$.
\end{lemma}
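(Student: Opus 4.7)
I would define $\tilde\phi$ via a ``majority-vote'' repair of $\phi$: set $\tilde\phi(g)$ to be the unique value of $F_g(h) \coloneqq \phi(gh)\phi(h)^{-1}$ attained for $\mu_G$-a.e.\ $h$. To keep this internal, realize it as the ultralimit $\tilde\phi = \lim_{\n \to p} \tilde\phi_\n$, where $\tilde\phi_\n(g)$ is whichever element of $K_\n$ is hit by $h \mapsto \phi_\n(gh)\phi_\n(h)^{-1}$ on a strict majority of $h \in G_\n$ (with the identity as a default when no strict majority exists). The verification splits into three steps: (i) for \emph{every} $g \in G$, $F_g$ is $\mu_G$-a.e.\ constant, and this constant matches the internal majority-vote value; (ii) $\tilde\phi$ is a homomorphism agreeing $\mu_G$-a.e.\ with $\phi$; (iii) uniqueness and the external-subgroup statement.

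The crux is (i). The almost-hom hypothesis gives $\phi(hk)=\phi(h)\phi(k)$ for $\mu_{G^2}$-a.e.\ $(h,k)$; applying the internal, $\mu_{G^2}$-preserving translation $(h,k) \mapsto (gh,k)$ yields $\phi(ghk) = \phi(gh)\phi(k)$ for $\mu_{G^2}$-a.e.\ $(h,k)$ as well. Dividing these gives $F_g(hk) = F_g(h)$ for $\mu_{G^2}$-a.e.\ $(h,k)$, and Lemma \ref{fubini}(ii) then provides a single $h_0 \in G$ with $F_g(h_0 k) = F_g(h_0)$ for $\mu_G$-a.e.\ $k$. Reindexing via the $\mu_G$-preserving translation $h = h_0 k$ converts this to $F_g(h) = F_g(h_0)$ for $\mu_G$-a.e.\ $h$, so $F_g$ is a.e.\ constant. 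Unwinding to the $G_\n$ level, the Loeb-a.e.\ value is attained by $F_{g_\n,\n}$ on a fraction of $G_\n$ tending to $1$ for $p$-a.e.\ $\n$, hence is the strict internal majority, matching the ultralimit definition.

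Steps (ii) and (iii) follow by short Fubini/translation arguments. For the homomorphism law, fix $g_1, g_2 \in G$; step (i) applied to $g = g_2$ gives $\phi(g_2 h) = \tilde\phi(g_2)\phi(h)$ for $\mu_G$-a.e.\ $h$, while step (i) applied to $g = g_1$ together with the substitution $h' = g_2 h$ (justified by $\mu_G$-translation-invariance) gives $\phi(g_1 \cdot g_2 h) = \tilde\phi(g_1)\phi(g_2 h)$ for $\mu_G$-a.e.\ $h$; composing yields $F_{g_1 g_2}(h) = \tilde\phi(g_1)\tilde\phi(g_2)$ for a.e.\ $h$, and the uniqueness in (i) forces $\tilde\phi(g_1 g_2) = \tilde\phi(g_1)\tilde\phi(g_2)$. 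The identity $\tilde\phi = \phi$ a.e.\ is immediate from Fubini applied to the hypothesis. Uniqueness follows because two candidates must agree on an internal subgroup of $G$ of Loeb measure $1$, and any proper internal subgroup would have a disjoint coset of equal Loeb measure, contradicting $\mu_G(G) = 1$. For the subgroup statement, if $\phi(G) \subset K'$ then every value of $F_g$ lies in $K'$, hence so does $\tilde\phi(g)$.

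The main obstacle I anticipate is the usual Loeb-measure bookkeeping: the translated good set $\{(h,k) : \phi(ghk) = \phi(gh)\phi(k)\}$ has to be treated as a member of $\mathcal{L}_{G^2}$ rather than the product $\sigma$-algebra $\mathcal{L}_G \otimes \mathcal{L}_G$ (cf.\ the warning before Lemma \ref{fubini}), so that Lemma \ref{fubini}(ii) actually applies; once this is correctly set up, the rest is routine.
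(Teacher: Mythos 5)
Your proof is correct and follows essentially the same strategy as the paper's: define $\tilde\phi(g)$ as the mode (majority value) of the translated difference $\phi(gh)\phi(h)^{-1}$, prove it is $\mu_G$-a.e.\ constant in $h$ via Lemma~\ref{fubini}(ii) together with translation invariance of Loeb measure, and then derive the homomorphism law, the a.e.\ agreement with $\phi$, and uniqueness by the same Fubini-plus-translation manipulations. The only cosmetic differences are the left/right orientation of the difference quotient and the uniqueness step, which you phrase via ``an internal subgroup of full Loeb measure must be all of $G$'' while the paper instead identifies both candidates a.e.\ with $\phi(gh)\phi(h)^{-1}$.
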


\begin{proof}  Let $g \in G$.  We claim that for $\mu_{G^2}$-almost all $a,b \in G$, one has
$$ \phi(a)^{-1} \phi(ag)  = \phi(b)^{-1} \phi(bg).$$
Indeed, writing $b = ca$, we see from Lemma \ref{fubini}(ii) and translation invariance that for $\mu_{G^2}$-almost all $a,b \in G$, one has
$$ \phi(b) = \phi(c) \phi(a)$$
and
$$ \phi(bg) = \phi(c) \phi(ag)$$
giving the claim.  Thus, if we define $\tilde \phi(g)$ to be the mode of $\phi(a)^{-1} \phi(ag)$ for $a \in G$ (that is to say, the unique value that this quantity attains the most often), $\tilde \phi$ is an internal function with the property that for all $g \in G$, one has
\begin{equation}\label{ohi}
 \phi(a)^{-1} \phi(ag) = \tilde \phi(g)
 \end{equation}
for $\mu_G$-almost every $a \in G$.  In particular, for any $g,h \in G$, one has
\begin{align*}
\phi(a)^{-1} \phi(agh) &= \tilde \phi(gh) \\
\phi(a)^{-1} \phi(ag) &= \tilde \phi(g) \\
\phi(ag)^{-1} \phi(agh) &= \tilde \phi(h) 
\end{align*}
for at least one $a \in G$, hence $\tilde \phi(gh) = \tilde \phi(g) \tilde \phi(h)$.  Hence $\tilde \phi$ is a homomorphism.  Finally, from Lemma \ref{fubini}(ii) and \eqref{ohi} we have $\tilde \phi(g) = \phi(g)$ for $\mu_G$-almost all $g$.  This gives existence.  For uniqueness, observe that if we had two internal homomorphisms $\tilde \phi, \tilde \phi'$ obeying the conclusions of this lemma, then for any $g \in G$, we see from invariance of Loeb measure that
$$ \tilde \phi(g) = \tilde \phi(gh) \tilde \phi(h)^{-1} = \phi(gh) \phi(h)^{-1}$$
and
$$ \tilde \phi'(g) = \tilde \phi'(gh) \tilde \phi'(h)^{-1} = \phi(gh) \phi(h)^{-1}$$
for almost all $h \in G$, hence $\tilde \phi = \tilde \phi'$. This argument also shows that $\tilde \phi$ takes values in any external group $K'$ that $\phi$ does.
\end{proof}

Next, we establish some rigidity properties of near-polynomial maps.

\begin{lemma}[Infinitesimal near-polynomials are constant]\label{near-const}  Let $G$ be a nonstandard finite additive group, let $H$ be a standard prefiltered Lie group of degree $s$ for some $s \geq 0$, and $\phi \colon G \to {}^* H$ be an internal function such that for all standard $k \geq 0$, one has
$$ \aderiv_{h_1} \dots \aderiv_{h_k} \phi(x) \in {}^* H_k$$
for $\mu_{G^{k+1}}$-almost all $(x,h_1,\dots,h_k) \in G^{k+1}$, and such that $\st \phi(x) = 1$ for $\mu_G$-almost all $x \in G$.  Then there exists $c \in {}^* H$ with $\st c = 1$ such that $\phi(x)=c$ for $\mu_G$-almost all $x \in G$.
\end{lemma}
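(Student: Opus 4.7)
The plan is to induct on the degree $s$ of the prefiltration. In the base case $s=0$ one has $H_1 = \{1\}$, so the hypothesis at $k=1$ gives $\phi(x)\phi(x-h)^{-1} = 1$ for $\mu_{G^2}$-a.e.\ $(x,h)$; by the Keisler--Fubini theorem (Lemma~\ref{fubini}) this forces $\phi$ to be $\mu_G$-a.e.\ equal to a single constant $c \in {}^*H$, infinitesimal by hypothesis.

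For the inductive step, assume the result for prefiltrations of degree $s-1$, and set $\psi_h(x) \coloneqq \phi(x)\phi(x-h)^{-1} \in {}^*H_1$. Equip the Lie subgroup $H_1$ with the shifted prefiltration $(H_1)_i \coloneqq H_{i+1}$, which has degree $s-1$. Repeatedly applying Keisler--Fubini to the countably many conditions defining a near-polynomial, I find that for $\mu_G$-a.e.\ $h$ the internal function $\psi_h \colon G \to {}^*H_1$ is itself an infinitesimal near-polynomial of degree $s-1$. The inductive hypothesis then produces, for each such $h$, an infinitesimal $c(h) \in {}^*H_1$ with $\psi_h(x) = c(h)$ for $\mu_G$-a.e.\ $x$. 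Choosing $c$ internally (e.g.\ $c(h) \coloneqq \psi_h(0)$) and comparing the three identities $\phi(x) = c(h)\phi(x-h)$, $\phi(x-h) = c(k)\phi(x-h-k)$ and $\phi(x) = c(h+k)\phi(x-h-k)$ shows that $c(h+k) = c(h)c(k)$ for $\mu_{G^2}$-a.e.\ $(h,k)$. Lemma~\ref{repair} then yields an internal homomorphism $\tilde c \colon G \to {}^*H_1$ agreeing with $c$ almost everywhere, so that $\phi(x) = \tilde c(h)\phi(x-h)$ for $\mu_{G^2}$-a.e.\ $(x,h)$.

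It remains to prove that $\tilde c \equiv 1$. The set $A \coloneqq \{y \in G : \st \tilde c(y) = 1\} = \bigcap_{n \in \N} \tilde c^{-1}(\{g \in {}^*H_1 : {}^*d(g,1) < 1/n\})$ is Loeb measurable (as a countable intersection of internal preimages), has full Loeb measure (since $\tilde c = c$ almost everywhere), and is an external subgroup of $G$ because the monad of $1$ in the Lie group $H_1$ is closed under products and inversion. Translation invariance of Loeb measure forces every coset of $A$ to have measure $1$, hence $A = G$. Thus for every standard $\varepsilon > 0$ the internal statement ``$\forall y \in G,\ {}^*d(\tilde c(y),1) < \varepsilon$'' holds, and by overspill there is an infinitesimal $\delta > 0$ with $\tilde c(G) \subset {}^*B(1,\delta)$. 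By the no-small-subgroups property of the Lie group $H_1$ there is a standard neighborhood $U$ of $1$ containing no nontrivial subgroup of $H_1$; transferring, ${}^*U$ contains no nontrivial internal subgroup, and since $\tilde c(G)$ is an internal subgroup contained in ${}^*U$, we conclude $\tilde c \equiv 1$. Then $\phi(x) = \phi(x-h)$ for $\mu_{G^2}$-a.e.\ $(x,h)$, and a final application of Keisler--Fubini gives the desired constant.

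I expect the main obstacle to be this last step: upgrading ``$\st \tilde c(y) = 1$ almost everywhere'' first to ``everywhere'' (via the Loeb-measurable full-measure subgroup argument) and then to ``$\tilde c \equiv 1$'' (via overspill combined with the specifically \emph{Lie}-theoretic no-small-subgroups property). The induction set-up and the extraction of the almost homomorphism $c$ are essentially bookkeeping, but this final step genuinely uses that $H$ is a Lie group rather than merely a locally compact nilpotent group.
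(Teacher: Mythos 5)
Your proof follows essentially the same route as the paper's: induct on $s$, apply the inductive hypothesis to $\aderiv_h\phi$ (using the shifted filtration on $H_1$), extract an internal almost-homomorphism $h \mapsto c(h)$ from the cocycle relation, repair it to an internal homomorphism $\tilde c$ via Lemma~\ref{repair}, then invoke the no-small-subgroups property of the Lie group to force $\tilde c \equiv 1$, and finish with one more application of Keisler--Fubini.

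The one real slip is your suggested internal choice $c(h) := \psi_h(0)$. The inductive hypothesis gives a constant $\eps_h$ with $\psi_h(x) = \eps_h$ for $\mu_G$-almost all $x$, but the single point $x = 0$ is a $\mu_G$-null set, so there is no reason $\psi_h(0) = \eps_h$ even for a.e.\ $h$. You need an internal representative that actually agrees with $\eps_h$ almost everywhere in $h$: either fix a point $x_0 \in G$ produced by Keisler--Fubini with the property that $\psi_h(x_0) = \eps_h$ for a.e.\ $h$, or (as the paper does) take $c(h)$ to be the internal \emph{mode} of $x \mapsto \psi_h(x)$, i.e.\ the value attained most often, which is automatically internal in $h$ and equals $\eps_h$ whenever $\psi_h$ is a.e.\ constant. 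With that fix your derivation of the cocycle identity and the rest of the argument go through.

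Two smaller comments. First, in the final step you can dispense with overspill: once $\st\tilde c(h) = 1$ for all $h$, the internal subgroup $\tilde c(G)$ is contained in the monad of the identity, hence in ${}^*U$ for any standard open neighborhood $U$ of $1$, and no-small-subgroups (transferred) gives triviality directly. Alternatively, since $G$ is internally finite, $\max_{h\in G} {}^*d(\tilde c(h),1)$ is an internally attained maximum and is therefore infinitesimal without any overspill. Second, your full-measure-subgroup argument for upgrading ``$\st\tilde c = 1$ a.e.'' to ``everywhere'' is correct, though it can be phrased more directly: since the set of good $h$ has full measure, any $g \in G$ can be written as $g = b_1 - b_2$ with $b_1, b_2$ both good, and then $\st\tilde c(g) = \st\tilde c(b_1)\,(\st\tilde c(b_2))^{-1} = 1$ because the standard part map on the near-identity elements of ${}^*H_1$ respects the group operation.
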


\begin{proof}  We may assume that the claim is already established for any smaller value of $s$.  By Lemma \ref{fubini}(ii), we see that if $s \geq 1$, then for $\mu_G$-almost all $h \in G$, the function $\aderiv_h \phi \colon G \to {}^* H$ obeys all the hypotheses of the lemma, but with $s$ replaced by $s-1$ and the prefiltration $(H_i)_{i=0}^\infty$ replaced by the shifted filtration $(H_{i+1})_{i=0}^\infty$.  By induction hypothesis, this implies that for such $h$ there exists $\eps_h \in {}^* H$ with $\st \eps_h = 1$ such that $\aderiv_h \phi(x) = \eps_h$ for $\mu_G$-almost all $x \in G$.  The same claim also holds in the $s=0$ case (with $\eps_h=1$ in this case).

Clearly for each $h$ the constant $\eps_h$, if it exists, is unique, and depends internally on $h$ (it is the mode of $\aderiv_h \phi$).  From the cocycle equation
$$\aderiv_{h+k} \phi(x) = \aderiv_h \phi(x) \aderiv_k\phi(x-h)$$
and Lemma \ref{fubini}(ii), we see that
$$ \eps_{h+k} = \eps_h \eps_k$$
for $\mu_{G^2}$-almost all $(h,k) \in G^2$.  By Lemma \ref{repair}, this means that there is an internal homomorphism $\tilde \eps \colon G \to {}^* H$ such that $\eps_h = \tilde \eps_h$ for $\mu_G$-almost all $(h,g) \in G^2$.  Thus $\st \tilde \eps_h = 1$ for $\mu_G$-almost all $h \in G$, and hence for all $h \in G$ by the homomorphism property; thus $\tilde \eps_h$ always stays infinitesimally close to the identity.  Since the Lie group $H$ contains no small subgroups (e.g., see \cite[Exercise 1.4.19]{tao-hilbert}), the nonstandard group ${}^* H$ contains no infinitesimal internal subgroups, thus the internal homomorphism $h \mapsto \tilde \eps_h$ must be trivial.  We conclude that $\aderiv_h \phi(x)=1$ for $\mu_G$-almost all $x \in G$, hence by Lemma \ref{fubini}(ii) and a change of variables we have $\phi(x) = \phi(y)$ for $\mu_{G^2}$-almost all $x,y \in G$.  By another application of Lemma \ref{fubini}(ii), we conclude that there exists $y \in G$ such that $\phi(x) = \phi(y)$ for $\mu_G$-almost all $y \in G$, and such that $\st \phi(y) = 1$, and the claim follows.
\end{proof}

\begin{corollary}[Nearby polynomial maps differ by a constant]\label{close}  Let $G$ be a nonstandard finite additive group, and $H/\Gamma$ a standard filtered nilmanifold.  If $\phi_1, \phi_2 \colon G \to {}^* (H/\Gamma)$ are internal polynomial maps such that $\st \phi_1(x) = \st \phi_2(x)$ for $\mu_G$-almost all $x \in G$, then there exists $c \in {}^* H$ with $\st c = 1$ such that $\phi_1 = c \phi_2$.  In particular, if $\phi_1,\phi_2$ agree at one point, then they are identical.
\end{corollary}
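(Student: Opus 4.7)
The strategy is to reduce the corollary to Lemma~\ref{near-const} by constructing a ``ratio'' polynomial map with values in ${}^*H$ whose standard part is identically $1$ almost everywhere.

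The first step is to lift each $\phi_i$ to an internal polynomial map $\tilde\phi_i \colon G \to {}^*H$. This lifting is possible because $H$ is a filtered nilpotent Lie group: choosing Mal'cev coordinates allows a polynomial map into the quotient $H/\Gamma$ to be realized, layer by layer of the filtration, by a polynomial map into $H$ itself, since polynomial maps from an abelian-filtered group are characterized via \eqref{relate} by iterated derivatives landing in the corresponding filtration pieces, and these pieces can be lifted from $H_k/\Gamma_k$ to $H_k$. Writing $G$ internally as a product of cyclic groups and applying the construction fiberwise yields the required internal lift. With the lifts in hand, the Lazard--Leibman theorem (\cite[Corollary B.4]{gtz}) implies that the ratio
\[
\psi(x) := \tilde\phi_1(x) \cdot \tilde\phi_2(x)^{-1}
\]
is an internal polynomial map $G \to {}^*H$.

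The second step is to arrange $\st \psi \equiv 1$ almost everywhere. The hypothesis $\st\phi_1 = \st\phi_2$ a.e.\ says that $\psi(x)$ projects onto an infinitesimal element of ${}^*(H/\Gamma)$ for $\mu_G$-almost every $x$, so $\psi(x) \in {}^*\Gamma \cdot (1+o(1))$ a.e. I would rechoose the lift $\tilde\phi_1$ at a fixed basepoint $x_0$ where $\st\phi_1(x_0) = \st\phi_2(x_0)$ so that $\tilde\phi_1(x_0)$ is infinitesimally close to $\tilde\phi_2(x_0)$ in ${}^*H$; then, exploiting that the polynomial $\psi$ is rigidly determined by finitely many Taylor-like coefficients, together with the absence of small subgroups in the Lie group $H$, one rules out $\psi$ taking nontrivial values in ${}^*\Gamma$ and concludes $\st\psi(x)=1$ a.e. Lemma~\ref{near-const} then supplies $c \in {}^*H$ with $\st c = 1$ and $\psi(x) = c$ for $\mu_G$-a.e.\ $x$, which projects to $\phi_1(x) = c\phi_2(x)$ a.e. A further rigidity argument (in the spirit of the uniqueness part of Lemma~\ref{repair}, applied to a polynomial lift of $(c\phi_2) \cdot \phi_1^{-1}$) upgrades this almost-everywhere equality of internal polynomial maps to equality everywhere.

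For the last sentence, if $\phi_1(x_0) = \phi_2(x_0)$ for some $x_0$, then $c$ stabilizes $\phi_2(x_0)$ under the left action of ${}^*H$ on ${}^*(H/\Gamma)$, placing $c$ in the conjugate discrete subgroup $\tilde\phi_2(x_0) \cdot {}^*\Gamma \cdot \tilde\phi_2(x_0)^{-1}$ of ${}^*H$. Combined with $\st c = 1$ and the no-small-subgroups property of $H$, this forces $c = 1$, hence $\phi_1 = \phi_2$. The main obstacle is the second step: ensuring that the polynomial ratio $\psi$ has $\st\psi \equiv 1$ in ${}^*H$ rather than merely $\st\bar\psi \equiv 1$ in ${}^*(H/\Gamma)$, since \emph{a priori} $\psi$ could differ from the identity by an essentially arbitrary element of ${}^*\Gamma$ whose standard part need not be $1$. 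Careful management of the lift and exploitation of the polynomial rigidity of $\psi$ to prevent discrete ``jumps'' is where the bulk of the technical work of the argument would sit.
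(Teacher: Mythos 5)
Your high-level gameplan---form a ratio valued in ${}^*H$ and feed it to Lemma~\ref{near-const}---does match the paper, but the implementation contains a genuine gap at the very first step, and that gap cascades through the rest.

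The lifting you propose does not exist in general. There is no internal polynomial map $\tilde\phi_i\colon G\to{}^*H$ lifting $\phi_i\colon G\to{}^*(H/\Gamma)$ when $G$ has torsion, Mal'cev coordinates notwithstanding. Take $G=\Z/N\Z$ with $N$ an unbounded nonstandard integer, $H=\R$ with $\Gamma=\Z$ and the abelian (degree~$1$) filtration, and $\phi(x)=x/N\bmod 1$: this is an internal polynomial map of degree~$1$ into $\R/\Z$, but there is no affine (hence no degree~$\le 1$ polynomial) map $\Z/N\Z\to\R$ projecting to it, because torsion in $G$ obstructs lifting through the covering $\R\to\R/\Z$. (Contrast with Section~\ref{tonil-sec} of the paper, where precisely this obstruction is handled by passing from $G_S$ to a free cover $\Z^D$ rather than lifting over $G_S$ itself.) So $\psi=\tilde\phi_1\tilde\phi_2^{-1}$ cannot be taken for granted as an internal polynomial $G\to{}^*H$, and the Lazard--Leibman step never gets off the ground. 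The paper sidesteps this entirely: it never lifts $\phi_1,\phi_2$ separately. Using only the hypothesis $\st\phi_1=\st\phi_2$ a.e., one defines an internal map $\phi\colon G\to{}^*H$ with $\phi_1=\phi\,\phi_2$ and $\st\phi=1$ a.e.\ (the locally unique small lift of the relative displacement). This $\phi$ is an internal map but is not claimed to be a polynomial.

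Consequently, the hypothesis of Lemma~\ref{near-const}---that all standard-order derivatives of $\phi$ land in the ${}^*H_k$ almost everywhere---cannot be inherited for free, and you must verify it by hand. This is exactly where you write that the ``bulk of the technical work would sit'' and then gesture at ``rigidity of Taylor-like coefficients'' and ``no small subgroups'' without an argument; in particular nothing you say rules out the phenomenon you yourself flag, namely $\psi$ differing from the identity by a large element of ${}^*\Gamma$ on some positive-measure set. The paper's resolution is the real content: using that $\phi_1,\phi_2$ are polynomial so that the cube tuples $\Phi_{i,x,h}$ lie in $\HK^k({}^*H)/\HK^k({}^*\Gamma)$, cocompactness of $\Gamma$ to produce a bounded cube $\Psi_{x,h}$ and an infinitesimal $\eps_{x,h}$ in $\HK^k({}^*H)$, and then discreteness of $\Gamma^{\{0,1\}^k}$ to conclude that the infinitesimal expression $\Psi_{x,h}^{-1}\eps_{x,h}^{-1}\Phi_{x,h}\Psi_{x,h}$ is exactly the identity, forcing $\Phi_{x,h}=\eps_{x,h}\in\HK^k({}^*H)$. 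Only then does Lemma~\ref{near-const} apply. Finally, the upgrade from a.e.\ equality $\phi_1=c\phi_2$ to everywhere equality is handled in the paper not by Lemma~\ref{repair} but by the degree bound: for each $x$ one picks $h_1,\dots,h_{s+1}$ with all nonzero corners in the good set and uses vanishing of the $(s+1)$-st derivative of both polynomial maps. Your last-sentence argument (that $c$ lands in a conjugate of ${}^*\Gamma$ and hence equals $1$) is essentially right, but the operative property is discreteness of $\Gamma$, not the no-small-subgroups property of $H$.
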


\begin{proof}  By hypothesis, we can find an internal map $\phi \colon G \to {}^* H$ such that $\phi_1(x) = \phi(x) \phi_2(x)$ and $\st \phi(x) = 1$ for $\mu_G$-almost all $x \in G$. We claim that for any standard $k \geq 0$, one has
$$ \aderiv_{h_1} \dots \aderiv_{h_k} \phi(x) \in {}^* H_k$$
for $\mu_{G^{k+1}}$-almost all $(x,h_1,\dots,h_k) \in G^{k+1}$.  This in turn will follow if we can show that the tuple
$$ \Phi_{x,h} \coloneqq (\phi(x + h \cdot \omega))_{\omega \in \{0,1\}^k} \in ({}^* H)^{\{0,1\}^k}$$
lies in $\HK^k( {}^* H )$
for $\mu_{G^{k+1}}$-almost all $(x,h_1,\dots,h_k) \in G^{k+1}$.  But by Lemma \ref{fubini}(ii), we see that for $\mu_{G^{k+1}}$-almost all $(x,h_1,\dots,h_k) \in G^{k+1}$, this tuple $\Phi_{x,h}$ is infinitesimally close to the identity and obeys the identity
\begin{equation}\label{phiphiphi}
 \Phi_{1,x,h} = \Phi_{x,h} \Phi_{2,x,h}
 \end{equation}
in $({}^* H)^{\{0,1\}^k} / ({}^* \Gamma)^{\{0,1\}^k}$, where for $i=1,2$ we define the points $\Phi_{i,x,h} \in ({}^* H)^{\{0,1\}^k} / ({}^* \Gamma)^{\{0,1\}^k}$ by the formula
$$ \Phi_{i,x,h} \coloneqq (\phi_1(x + h \cdot \omega))_{\omega \in \{0,1\}^k}.$$
Since $\phi_1,\phi_2$ are internal polynomials,In particular, $\Phi_{1,x,h}$ and $\Phi_{2,x,h}$ are infinitesimally close in $\HK^k({}^* H)/\HK({}^* \Gamma)$ (which we identify with a subset of $({}^* H)^{\{0,1\}^k} / ({}^* \Gamma)^{\{0,1\}^k}$ in the obvious fashion); since $\Gamma$ is cocompact in $H$, we can then write
$$ \Phi_{2,x,h} = \Psi_{x,h} ({}^* \Gamma)^{\{0,1\}^k}; \quad 
 \Phi_{1,x,h} = \eps_{x,h} \Psi_{x,h} ({}^* \Gamma)^{\{0,1\}^k}$$
for some $\eps_{x,h}, \Psi_{x,h} \in \HK^k({}^* H)$ with $\eps_{x,h}$ infinitesimally close to the identity and $\Psi_{x,h}$ bounded.  Inserting this back into \eqref{phiphiphi} and rearranging, we conclude that
$$ \Psi_{x,h}^{-1} \eps_{x,h}^{-1} \Phi_{x,h} \Psi_{x,h} \in ({}^* \Gamma)^{\{0,1\}^k}.$$
The left-hand side is infinitesimally close to the identity.  Since $\Gamma$ is discrete, the left-hand side must therefore equal the identity.  Thus $\Phi_{x,h} = \eps_{x,h}$, and thus $\Phi_{x,h}$ lies in $\HK^k( {}^* H )$ as claimed.

Applying Lemma \ref{near-const}, we conclude that there exists $c \in {}^* H$ with $\st c = 1$ such that $\phi_1(x) = c \phi_2(x)$ for $\mu_G$-almost all $x \in G$.  By Lemma \ref{fubini}(ii), this implies that for all $x \in G$, we have for $\mu_{G^{s+1}}$-almost all $h_1,\dots,h_{s+1} \in G$ that
$$ \phi_1(x + \omega \cdot h ) = c  \phi_2(x + \omega \cdot h ) $$
for all $\omega \in \{0,1\}^{s+1} \backslash \{0\}^{s+1}$.  Since $\phi_1, c\phi_2$ are both polynomial maps, one has
$$ \aderiv_{h_1} \dots \aderiv_{h_{s+1}} \phi_1(x) =  \aderiv_{h_1} \dots \aderiv_{h_{s+1}} c\phi_2(x) = 1$$
and hence $\phi_1(x)=\phi_2(x)$ for all $x \in G$, giving the claim.
\end{proof}

\begin{lemma}[Cocycle triviality on vector spaces]\label{cocy-triv}  Let $V$ be a standard finite-dimensional vector space, let $G$ be a nonstandard finite additive group, let $E$ be an internal subset of $G$ of full Loeb measure. Let
$$ c \colon \{ (h,k) \in E \times E: h+k \in E \} \to {}^* V$$
be an internal function obeying the cocycle equation
\begin{equation}\label{cocycl}
 c(h,k) + c(h+k,l) = c(h,k+l) + c(k,l)
 \end{equation}
whenever $h,k,l \in E$ are such that $h+k,k+l,h+k+l \in E$.  Then there exists an internal function $f \colon E \to {}^* V$ such that
\begin{equation}\label{cff}
 c(h,k) = f(h+k) -f(h) - f(k).
 \end{equation}
Furthermore, if we place a norm $\|\cdot\|_V$ on $V$ (and hence an internal norm $\|\cdot \|_{{}^* V}$ on ${}^* V$) we can choose $f$ so that $\|f\|_\infty \leq 2 \|c\|_\infty$, where
$$ \|f\|_\infty \coloneqq  \sup_{h \in E} \|f(h)\|_{{}^* V}$$
and
$$ \|c\|_\infty \coloneqq \sup_{h,k \in E: h+k \in E} \|c(h,k)\|_{{}^* V}$$
and we use the internal supremum.
\end{lemma}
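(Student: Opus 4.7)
The plan is to trivialize $c$ by first extending it to a global cocycle on $G \times G$ and then applying the classical averaging trick. Specifically, if $\tilde c \colon G \times G \to {}^* V$ is an internal cocycle that agrees with $c$ on the domain of $c$, then the formula $f(h) := -{}^* \E_{k \in G}\tilde c(h, k)$ defines an internal function satisfying the desired coboundary identity everywhere on $G \times G$, and in particular on the domain of $c$, with the norm of $f$ controlled by that of $\tilde c$.

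The main obstacle is the construction of an internal extension $\tilde c$ that agrees with $c$ exactly on its domain with $\|\tilde c\|_\infty \leq 2\|c\|_\infty$. I would obtain this via transfer from the standard fact that any partial $2$-cocycle on a subset of the square of a finite abelian group, with values in a divisible abelian group (in particular, a vector space), extends to a global $2$-cocycle, proved by inductively enumerating the pairs in $G \times G$ outside the domain of $c$ and using divisibility of $V$ to resolve the finitely many cocycle consistency constraints at each step; careful bookkeeping of the inductive construction yields the norm bound. One might hope to avoid the transfer by using the direct averaging formula $\tilde c(h, k) := {}^* \E_{l \in G}[c'(h, k+l) + c'(k, l) - c'(h+k, l)]$, where $c'$ is the extension of $c$ by zero---the exact translation-invariance of the internal counting measure on $G$ forces this to be a global cocycle---but this $\tilde c$ only agrees with $c$ on its domain up to an infinitesimal error, which is insufficient for the exact identity required in \eqref{cff}.

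With $\tilde c$ in hand, the verification is a routine computation. Using the cocycle identity to rewrite $\tilde c(h+k, l) - \tilde c(h, l) - \tilde c(k, l) = \tilde c(h, k+l) - \tilde c(h, k) - \tilde c(h, l)$ and then averaging in $l \in G$, the exact reindexing identity ${}^* \E_l \tilde c(h, k+l) = {}^* \E_l \tilde c(h, l)$ on the internally finite $G$ cancels the $\tilde c(h, \cdot)$ terms, yielding
\[
f(h+k) - f(h) - f(k) = -{}^* \E_l \bigl[\tilde c(h+k, l) - \tilde c(h, l) - \tilde c(k, l)\bigr] = \tilde c(h, k)
\]
for all $h, k \in G$. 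Restricting to pairs in the domain of $c$ and using the fact that $\tilde c$ agrees with $c$ there yields \eqref{cff}, and the bound $\|f\|_\infty \leq \|\tilde c\|_\infty \leq 2\|c\|_\infty$ follows immediately from the construction of $\tilde c$.
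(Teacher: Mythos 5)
The central gap is in the extension step: you assert as a ``standard fact'' that any partial $2$-cocycle on $\{(h,k) \in E \times E : h+k \in E\}$ extends to a global internal $2$-cocycle $\tilde c$ on $G \times G$ with $\|\tilde c\|_\infty \leq 2\|c\|_\infty$, but you do not prove this, and it is not obvious. The ``inductive enumeration'' sketch does not work as stated: when you try to define $\tilde c(h_0,k_0)$ for a new pair, the cocycle equation gives $\tilde c(h_0,k_0) = \tilde c(h_0,k_0+l) + \tilde c(k_0,l) - \tilde c(h_0+k_0,l)$ for \emph{every} $l$ with the three right-hand terms already defined, and there is no a priori reason these constraints are mutually consistent. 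Divisibility of $V$ lets you pick \emph{a} value whenever the constraint set is a coset (or empty), but it does not resolve conflicting constraints. Moreover, since $H^2(G;V)=0$ for internally finite $G$ and $V$ a $\Q$-vector space, a global internal cocycle is automatically an internal coboundary $\delta f$; thus ``extend $c$ to a global cocycle $\tilde c$'' and then ``trivialize $\tilde c$'' is not a reduction --- the extension step already contains the full content of the lemma, with nothing left for the averaging step to do.

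The closest correct approach is in fact the one you considered and discarded: define an approximate primitive by averaging (the paper uses $f(h) := \E_{k \in E} c(h,k)$ rather than your $-{}^*\E_{k\in G}$, but the idea is the same), note that the resulting $c' := c - \delta f$ is still a cocycle on the original domain but with sup norm multiplied by a factor like $O(\eps)$ (where $E$ has density $\geq 1-\eps$), and then \emph{iterate} this construction, summing the resulting geometric/telescoping series of primitives. The paper first descends from the nonstandard statement to a standard $\eps$-quantified statement precisely so this Newton-type iteration can be run, and the factor $2$ in $\|f\|_\infty \leq 2\|c\|_\infty$ comes from summing the geometric series $\sum_j 2^{-j}\|c\|_\infty$. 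Your instinct that a single averaging pass gives only an approximate (infinitesimal-error) identity is correct; the missing idea is that this approximate step has a contraction property and hence can be iterated to an exact solution.
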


\begin{proof}  Fix $V$, and let $\eps>0$ be a sufficiently small standard constant.  It suffices to prove the following standard version of the above statement: if $G$ is a standard finite additive group, $E$ is a subset of $G$ of density at least $1-\eps$, and $c \colon \{ (h,k) \in E \times E: h+k \in E \} \to V$ obeys \eqref{cocycl} whenever $h,k,l \in E$ are such that $h+k,k+l,h+k+l \in E$, then there exists $f \colon E \to V$ with $\|f\|_\infty \leq 2 \|c\|_\infty$ obeying \eqref{cff}.

Recall that in this paper we use the standard asymptotic notation of using $O(X)$ to denote a quantity bounded in magnitude by $CX$ for some absolute constant $C$. 
Define the averaged function $f \colon E \to V$ by the formula
$$ f(h) \coloneqq \frac{1}{|E|} \sum_{k \in E} c(h,k).$$
For any $h,k \in E$ with $h+k \in E$, we average \eqref{cocycl} over all $l \in E$ with $k+l, h+k+l \in E$, which is a subset of $E$ that omits only $O(\eps |E|)$ of the elements of $E$, to conclude that
$$ c(h,k) + f(h+k) = f(h) + f(k) + O( \eps \|c\|_\infty )$$
for all such $h,k$.  In particular, if $\eps$ is small enough, the function 
$$ c'(h,k) \coloneqq c(h,k) + f(h+k) - f(h) - f(k)$$
is such that $\|c'\|_\infty \leq \frac{1}{2} \|c\|_\infty$; also $\|f\|_\infty \leq \|c\|_\infty$.  Because $c$ obeys \eqref{cocycl}, the function $c'$ does also.  If one iterates this construction and sums the telescoping series, one obtains the claim.
\end{proof}

\subsection{Main argument}

We now establish the following proposition, which implies Lemma \ref{stab-poly} as a special case:

\begin{proposition}[Stability of polynomials]\label{propstab}  Let $G$ be a nonstandard finite additive group, let $H/\Gamma$ be a prefiltered nilmanifold of some degree $s \geq 0$, and let $\tilde g \colon G \to H/\Gamma$ be an almost polynomial map. Then there exists an internal polynomial map $g \colon G \to {}^*(H/\Gamma)$
such that
\begin{equation}\label{toast}
\tilde g(x) = \st g(x)
\end{equation}
for $\mu_G$-almost all $x$.
\end{proposition}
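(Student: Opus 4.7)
My plan is to proceed by induction on the degree $s$ of the prefiltration on $H$.

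For the base case $s=0$, every polynomial map $G \to H/\Gamma$ is a constant by \eqref{relate}, and $\HK^1(H/\Gamma)$ is the diagonal of $(H/\Gamma)^2$; the $k=1$ almost-polynomial hypothesis together with Lemma \ref{fubini}(ii) then forces $\tilde g$ to be $\mu_G$-a.e.\ equal to a single element $c \in H/\Gamma$, and any internal representative of $c$ serves as $g$.

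For the inductive step, I would set $K := H_s$, which is central abelian with $K \cap \Gamma$ cocompact discrete in $K$. The quotient $H'/\Gamma' := (H/K)/(\Gamma K/K)$ is a prefiltered nilmanifold of degree $s-1$ and the projection $\pi \colon H/\Gamma \to H'/\Gamma'$ is a continuous nilspace morphism, so $\pi \circ \tilde g$ is almost polynomial of degree $s-1$. By the inductive hypothesis there exists an internal polynomial $g' \colon G \to {}^*(H'/\Gamma')$ with $\st g' = \pi \circ \tilde g$ a.e., which I would lift, using a Mal'cev basis of $H$ adapted to the filtration (setting the top $\dim K$ coordinates to $0$), to an internal polynomial $g_0 \colon G \to {}^*(H/\Gamma)$. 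The defect $\phi(x) := \tilde g(x) \cdot (\st g_0(x))^{-1}$ then takes values a.e.\ in the compact abelian Lie group $L := K/(K \cap \Gamma)$ and is almost polynomial of degree $s$ into $L$; since $K$ sits at the top of the filtration, this unwinds to the single condition $\aderiv_{h_1}\cdots\aderiv_{h_{s+1}} \phi = 0$ in $L$ almost everywhere.

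Using the decomposition $L \cong \T^a \times F$ for a finite abelian $F$ and lifting $\phi$ locally through the universal cover $\R^a \to \T^a$, I would reduce to the scalar problem: given an internal almost polynomial $\phi \colon G \to V$ into a standard finite-dimensional vector space whose $(s+1)$-st differences vanish almost everywhere, produce an internal polynomial $\psi \colon G \to {}^* V$ of degree $\leq s$ with $\st \psi = \phi$ a.e. This I would handle by a secondary induction on $s$: pass to $\aderiv_h \phi$, obtain a polynomial $\psi_h$ of degree $s-1$ for a.e.\ $h$ by the inductive hypothesis (using Corollary \ref{close} to pin down $\psi_h$ as an internal function of $h$), and then integrate the resulting cocycle $h \mapsto \psi_h$ via Lemma \ref{cocy-triv} into a polynomial $\psi$ of degree $s$. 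Then $g := \psi \cdot g_0$, with $\psi$ viewed as an internal map into ${}^* K \subset {}^* H$, will be an internal polynomial satisfying \eqref{toast}.

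The main obstacle will be the iterated cocycle integration in the final step: Lemma \ref{cocy-triv} is stated for scalar cocycles in a single pair of variables, so I will need to apply it to polynomial-valued cocycles depending on the remaining variables and reconcile the resulting potentials across the iteration. The rigidity assertions of Lemma \ref{repair}, Lemma \ref{near-const}, and Corollary \ref{close} will be essential for ensuring coherence, with additional bookkeeping required to handle the discrete factor $F$ in $L$ (and consequently the non-connected part of $K$) when lifting through the covering map.
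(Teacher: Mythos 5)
The proposal contains a genuine gap at the step ``lift, using a Mal'cev basis of $H$ adapted to the filtration (setting the top $\dim K$ coordinates to $0$), to an internal polynomial $g_0 \colon G \to {}^*(H/\Gamma)$,'' and the remainder of the argument depends on it. A coordinate section $\sigma \colon H/H_s\Gamma \to H/\Gamma$ obtained by setting the top Mal'cev coordinates to zero within a fundamental domain is \emph{not} a nilspace morphism: for generic Host--Kra cubes the Gray-code compatibility in $H_s$ fails because the coordinates wrap around mod $\Gamma$ (already visible for the Heisenberg nilmanifold over $\T^2$). Consequently $g_0 := \sigma \circ g'$ need not be an internal polynomial map, and then the ``defect'' $\phi(x) = \tilde g(x)\cdot(\st g_0(x))^{-1}$ need not be almost polynomial (the argument that $(\phi(x_\omega))_\omega \in \HK^k(L)$ requires $(\st g_0(x_\omega))_\omega \in \HK^k(H/\Gamma)$ a.e., which one no longer knows), so the inductive step does not close. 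The ability to lift a polynomial map through the central $H_s$-fibration is exactly the nontrivial obstruction the paper's proof is designed to handle: instead of a polynomial lift, the paper produces an internal lift $g^{**}$ that is deliberately \emph{not} required to be polynomial (Lemma \ref{nlift}), then runs a second induction on the derivatives $\aderiv_h g^{**}$ viewed, via the group $H^\Box$ with $\HK^{k+1}(H)=\HK^k(H^\Box)$, as maps into the degree-$(s-1)$ nilmanifold $\pi^\Box(H^\Box/\Gamma^\Box)$; the inductive hypothesis, Corollary \ref{close}, underspill, a single scalar application of Lemma \ref{cocy-triv}, and Lemma \ref{repair} then upgrade these derivatives to an exact internal homomorphism $h\mapsto(h,\phi_h)$, which integrates to the required polynomial.

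A secondary issue, which you partly flag yourself, is that even granting an internal polynomial $g_0$, the passage through $L\cong\T^a\times F$ and the universal cover $\R^a\to\T^a$ requires lifting an almost-polynomial map through the cover in a way that preserves internality and the almost-polynomial property, and the ``reconciling potentials across the iteration'' you anticipate would need Lemma \ref{cocy-triv} in a substantially stronger form than stated. The paper's derivative-first formulation sidesteps both: the cocycle $c(h,k)$ it must trivialize takes values in a single copy of $H_s$ (so, after logarithms, in a finite-dimensional vector space), and Lemma \ref{cocy-triv} is invoked exactly once, with the variable dependence on $x$ already eliminated by the rigidity Corollary \ref{close}.
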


The rest of this section is devoted to a proof of this proposition. We induct on $s$. When $s=0$ then $\tilde g$ is almost always constant in the sense that $\aderiv_h g(x) = 1$ for $\mu_{G^2}$-almost all $h,x$ and the claim is clear by repeating the final part of the proof of Lemma \ref{near-const}, so suppose $s \geq 1$ and the claim has already been proven for $s-1$.  

It is convenient to make some reductions regarding the nature of the prefiltered nilmanifold $H/\Gamma$:

\begin{proposition}  To establish Proposition \ref{propstab} for a given choice of $s$, we may assume without loss of generality that $H=H_0=H_1$ (i.e., that the prefiltration on $H$ is a filtration) and that $\Gamma_s = \Gamma \cap H_s$ is trivial (and hence $H_s$ is now a central compact subgroup of $H$).
\end{proposition}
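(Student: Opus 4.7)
The plan is to perform the two reductions in sequence: first adjusting the filtration so that $H_0 = H_1$, then quotienting out the top-level lattice $\Gamma_s$. In each step I will show that an almost polynomial map $\tilde g \colon G \to H/\Gamma$ transfers to an almost polynomial map into a simpler nilmanifold, and that an internal polynomial model for the latter can be promoted to one for the former.

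For the first reduction, I will extract information from the $k=1$ instance of the almost polynomial hypothesis. Unpacking the definition of $\HK^1(H/\Gamma)$ through its generators $[g_\omega]_\omega$, the condition says $\tilde g(y) \in \tilde g(x) \cdot H_1$ in $H/\Gamma$ for $\mu_{G^2}$-almost all $(x,y)$. By Lemma \ref{fubini}(ii) I can pick a point $x_0 \in G$ at which the full family of almost polynomial conditions holds, and fix a lift $\tilde h_{x_0} \in H$ of $\tilde g(x_0)$. Then $\tilde g'(y) := \tilde h_{x_0}^{-1} \tilde g(y)$ takes values almost surely in $H_1\Gamma/\Gamma \cong H_1/\Gamma_1$ (using normality of $H_1$ in $H$). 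Viewing $H_1/\Gamma_1$ with the induced structure where $(H_1)_0 = (H_1)_1 = H_1$ and $(H_1)_i = H_i$ for $i \geq 2$ --- now a genuine filtration --- one verifies $\tilde g'$ is almost polynomial using the identification $\HK^k(H) \cap H_1^{\{0,1\}^k} = \HK^k(H_1)$, which follows from the ordered-product factorization employed in the proof of Lemma \ref{ghk-split}. Applying the proposition in the filtered setting yields an internal polynomial $g' \colon G \to {}^*(H_1/\Gamma_1)$ with $\st g' = \tilde g'$ a.e., and then $g(y) := \tilde h_{x_0} g'(y)$ is the desired internal polynomial into ${}^*(H/\Gamma)$; polynomiality survives left multiplication by the standard constant $\tilde h_{x_0}$ because each $H_k$ is normal in $H$.

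For the second reduction I use that the top group $H_s$ is central in $H$ (as $[H, H_s] \subset H_{s+1} = \{1\}$), so the discrete subgroup $\Gamma_s$ is a central normal subgroup of $H$. I form the filtered Lie group $\tilde H := H/\Gamma_s$ with filtration $\tilde H_i := H_i/\Gamma_s$ for $i \leq s$ (and trivial above), and the filtered lattice $\tilde \Gamma := \Gamma/\Gamma_s$. Then $\tilde \Gamma \cap \tilde H_s = \Gamma_s/\Gamma_s = \{1\}$, while $\tilde H_s = H_s/\Gamma_s$ is a compact central subgroup. The natural projection $\pi \colon H/\Gamma \to \tilde H/\tilde \Gamma$ is both a homeomorphism and a nilspace isomorphism: its lift $H \to \tilde H$ sends generators $[g_\omega]_\omega$ of $\HK^k(H)$ to generators of $\HK^k(\tilde H)$, so the two Host--Kra structures coincide under the canonical set-theoretic identification. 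Hence $\pi \circ \tilde g$ is almost polynomial into $\tilde H/\tilde \Gamma$, and the reduced case of the proposition supplies an internal polynomial $g' \colon G \to {}^*(\tilde H/\tilde \Gamma)$; composing with ${}^*\pi^{-1}$ yields the required $g \colon G \to {}^*(H/\Gamma)$ with $\st g = \tilde g$ a.e.

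The main technical point I expect to require the most care is the bookkeeping of Host--Kra cubes under the two operations. For the first reduction one must verify the identity $\HK^k(H_1/\Gamma_1) = \HK^k(H/\Gamma) \cap (H_1/\Gamma_1)^{\{0,1\}^k}$ under the modified filtration, together with the fact that left-translation by a standard constant preserves both Host--Kra cubes and the polynomial property; for the second, one must check that the projection $H \to \tilde H$ maps $\HK^k(H)$ onto $\HK^k(\tilde H)$ and lands in $\HK^k(H_1/\Gamma_1)$ after descending. Both facts follow directly from the generator description of $\HK^k$ and the normality relations inherent in any filtration, so the work is essentially combinatorial rather than conceptual.
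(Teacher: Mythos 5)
Your proposal is correct and follows essentially the same route as the paper: localize $\tilde g$ (via the low-order Host--Kra cube conditions) to a single coset of $H_1$, translate that coset to the origin and pass to the induced filtered group $H_1$ with $(H_1)_0=(H_1)_1=H_1$, and then quotient $H$ and $\Gamma$ by $\Gamma_s$, observing that the resulting projection is a nilspace isomorphism. The only cosmetic quibble is that the lift $\tilde h_{x_0}$ should be taken in $H_0$ rather than $H$ (so that the diagonal $(\tilde h_{x_0},\dots,\tilde h_{x_0})$ is an $\HK^k$-generator and the normality you invoke is that of $H_1$ in $H_0$); this is automatic from the $k=0,1$ almost-polynomial conditions at the chosen base point.
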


\begin{proof}
By construction, $\tilde g$ almost always takes values in $H_0/\Gamma$, so after adjusting $\tilde g$ on a $\mu_G$-null set and redefining $H$ we may assume without loss of generality that $H=H_0$.  Quotienting out the group $H_1$ (which is now normal), we conclude that $\tilde g$ is almost always constant modulo $H_1$, so by modification on a further null set we may assume that $\tilde g$ takes values on a single coset of $H_1$.  Applying a translation we can move this coset to the origin, and so after redefining $H=H_0$ we may assume that $H=H_0=H_1$.  Among other things, this implies that $H_s$ is now central.  One can now quotient both $H$ and $\Gamma$ by the central group $\Gamma \cap H_s$ without affecting the polynomial structure of $H/\Gamma$, so one may assume without loss of generality that $\Gamma_s = \Gamma \cap H_s$ is trivial, as claimed. 
\end{proof}

Henceforth we assume that the conclusions of the above proposition hold.  We now use the induction hypothesis to reduce the task of proving Proposition \ref{propstab} to that of solving a certain lifting problem. Let $\pi \colon H/\Gamma \to H/H_s\Gamma$ be the projection map, which is a nilspace morphism.  We can identify $H/H_s\Gamma$ with a prefiltered nilmanifold of degree $s-1$; one can think of $H/\Gamma$ as a principal $H_s$-bundle over $H/H_s\Gamma$.  The function $\pi \circ \tilde g$ is an almost polynomial map from $G$ into this nilmanifold, thus by induction hypothesis there is an internal polynomial map $\overline{g}: G \to {}^*(H/H_s \Gamma)$ such that 
\begin{equation}\label{pig}
\pi \circ \tilde g(x) = \st \overline{g}(x)
\end{equation}
for $\mu_G$-almost all $x \in G$.  Thus, if for every $x \in G$ we define the nonstandard $H_s$-fibre $F_x \subset {}^* (H/\Gamma)$ as
$$ F_x \coloneqq \{ y \in {}^* (H/\Gamma): {}^* \pi(y) = \overline{g}(x) \}$$
then $F_x$ depends internally on $x$, and from \eqref{pig} we see that $\tilde g(x) \in \st(F_x)$ for $\mu_G$-almost all $x \in G$.  Observe that each $F_x$ is a free orbit (or \emph{torsor}) of ${}^* H_s$: thus if $y, z \in F_x$ then there is a unique $h_s \in {}^* H_s$ such that $h_s y = z$.  Let ${\mathcal S}$ denote the (internal) set of all internal functions $g \colon G \to {}^* H/\Gamma$ that are ``lifts'' of $\overline{g}$ in the sense that $g(x) \in F_x$ for all $x$.  This is a torsor of the internal functions from $G$ to ${}^* H_s$; indeed, if $g \in {\mathcal S}$ and $h_s \colon G \to {}^* H_s$ is internal then $h_s g \in {\mathcal S}$, and conversely if $g,g' \in {\mathcal S}$ then there is a unique internal $h_s \colon G \to {}^* H_s$ such that $g' = h_s g$.  It will suffice to find an element $g \in {\mathcal S}$ that is an internal polynomial and such that $\st g(x) = \tilde g(x)$ for $\mu_G$-almost all $x \in G$.

To find this lift we proceed in several stages.  We first establish a weaker claim in which the requirement of being an (internal) polynomial is removed:

\begin{lemma}[Existence of a non-polynomial lift near $\tilde g$]\label{nlift} There exists $g^{**} \in {\mathcal S}$ (not necessarily a polynomial) such that $\st g^{**}(x) = \tilde g(x)$ for $\mu_G$-almost all $x \in G$.  
\end{lemma}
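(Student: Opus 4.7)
The plan is to start from an arbitrary internal lift $g_0 \in \mathcal{S}$ of $\overline{g}$ and then multiply it by an internal ${}^* H_s$-valued function chosen to match $\tilde g$ after taking standard parts. The key enabling point is that under the reductions already made, $H_s$ is compact: the cocompact lattice $\Gamma_s = \Gamma \cap H_s$ is trivial, which forces $H_s$ itself to be compact. In particular the standard part map $\st \colon {}^* H_s \to H_s$ is defined on all of ${}^* H_s$, and any Loeb-measurable map $G \to H_s$ can be written as the standard part of an internal map $G \to {}^* H_s$ (this extends property (ii) of the Loeb construction from $\C$-valued functions to $H_s$-valued functions by fixing a smooth embedding of $H_s$ into some $\R^m$, lifting coordinate-wise, and projecting back via the standard part on a tubular neighborhood).

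First I would produce a reference element $g_0 \in \mathcal{S}$. Since ${}^* \pi \colon {}^*(H/\Gamma) \to {}^*(H/H_s \Gamma)$ is internally surjective (by transfer), the transfer of the axiom of choice, or equivalently choosing a set-theoretic section $s_\n$ of $\pi$ on each standard factor and taking the ultralimit, yields an internal section of ${}^* \pi$ whose composition with $\overline g$ lies in $\mathcal{S}$.

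Next, for $\mu_G$-almost all $x \in G$ both $\tilde g(x)$ and $\st g_0(x)$ project under $\pi$ to the same point $\st \overline g(x) = \pi(\tilde g(x))$, so they lie in the same $\pi$-fibre, which is a free $H_s$-torsor. Hence there is a unique $h_s(x) \in H_s$ with $\tilde g(x) = h_s(x) \cdot \st g_0(x)$. Extending $h_s$ arbitrarily on the null exceptional set gives a Loeb-measurable map $h_s \colon G \to H_s$, since $\tilde g$ and $\st g_0$ are Loeb-measurable and the torsor-division operation is continuous on the relevant fibred product.

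Finally, I would apply the compact-valued Loeb lifting to obtain an internal $h_s' \colon G \to {}^* H_s$ with $\st h_s' = h_s$ almost everywhere, and set $g^{**}(x) \coloneqq h_s'(x) \cdot g_0(x)$. This is internal (as a product of internal functions), lies in $\mathcal{S}$ because $h_s'(x)$ acts along the fibres of ${}^* \pi$, and satisfies $\st g^{**}(x) = \st h_s'(x) \cdot \st g_0(x) = h_s(x) \cdot \st g_0(x) = \tilde g(x)$ for $\mu_G$-almost all $x$. The only real subtlety is the Loeb lifting step for functions valued in the compact group $H_s$ rather than in $\C$, but this is a routine extension of the scalar-valued statement via a smooth embedding of $H_s$ into Euclidean space.
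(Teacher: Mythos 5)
Your proof is correct, and while it is in the same spirit as the paper's (start from an internal candidate and correct it), it reverses the order of operations in a way that is worth noting. The paper lifts $\tilde g$ directly to an internal $g^* \colon G \to {}^*(H/\Gamma)$ (via the compact-metric-space extension of the Loeb lifting), observes that $\overline g$ and ${}^*\pi \circ g^*$ are both internal with a.e.\ infinitesimal discrepancy, writes that discrepancy as an internal $\overline c \colon G \to {}^*(H/H_s)$ with $\st\overline c = 1$ a.e., lifts $\overline c$ locally to $c \colon G \to {}^*H$, and sets $g^{**} = c\,g^*$ so that the $\pi$-projection is exactly $\overline g$. You instead first manufacture a $g_0$ that already sits exactly in $\mathcal S$ (by composing $\overline g$ with an internal section ${}^*s$ of ${}^*\pi$, obtained by transfer), and then correct only the fibre direction by a Loeb-measurable $h_s \colon G \to H_s$ that you lift to an internal $h_s' \colon G \to {}^*H_s$. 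The two approaches are essentially dual, but yours has a small technical advantage: your correction lives in the compact group $H_s$, where the standard part map is globally defined, whereas the paper's correction $\overline c$ lives in the non-compact $H/H_s$ and its lift to $H$ requires a local continuous section near the identity together with the a.e.\ infinitesimality of $\overline c$. Both proofs invoke the same compact-valued extension of Loeb property (ii) (the paper to an $H/\Gamma$-valued $\tilde g$, you to the $H_s$-valued $h_s$), and your remark that $H_s$ is compact because $\Gamma_s$ is trivial and cocompact is exactly the observation the paper records after its reductions.
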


\begin{proof}  As $\tilde g$ is Loeb measurable, we can find an internal function $g^* \colon G \to {}^* H/\Gamma$ such that $\tilde g(x) = \st g^*(x)$ for $\mu_G$-almost all $x$, and hence 
$$ \st \overline{g}(x) = \st {}^* \pi(g^*(x))$$
for $\mu_G$-almost all $x \in G$.  Since $\overline{g}$ and ${}^* \pi \circ g^*$ are both internal functions, we can thus write
$$ \overline{g}(x) = \overline{c}(x) {}^* \pi(g^*(x))$$
for all $x \in G$, where $\overline{c} \colon G \to {}^* (H/H_s)$ is an internal function with $\st \overline{c}(x)=1$ for $\mu_G$-almost all $x \in G$.  We can lift this up to an internal function $c \colon G \to {}^* H$ with $\st c(x)=1$ for $\mu_G$-almost all $x \in G$; if we then define $g^{**} \coloneqq c g^*$, we see that $g^{**}$ obeys the stated properties.
\end{proof}

Next, we identify the set ${\mathcal S}$ with a certain type of group homomorphism.  For any $x,h \in G$, let $\Hom( F_x \to F_{x-h} )$ denote the space of internal functions $\phi \colon F_x \to F_{x-h}$ which are ${}^* H_s$-equivariant in the sense that $\phi(h_s y) = h_s \phi(y)$ for any $h_s \in {}^* H_s$ and $y \in F_x$.  These spaces give the fibration $(F_x)_{x \in G}$ the structure of a groupoid, since we obviously have a composition law
$$ \circ \colon\Hom(F_{x-h} \to F_{x-h-k}) \times \Hom(F_x \to F_{x-h})\to \Hom(F_x \to F_{x-h-k})$$
for any $x,h,k \in {}^* G$, obeying the following groupoid axioms:
\begin{itemize}
    \item (Identity)  The identity map $\mathrm{id}_x \in \Hom(F_x \to F_x)$ is such that $\phi \circ \id_x = \id_{x-h} \circ \phi = \phi$ for all $x,h \in G$ and $\phi \in \Hom(F_x \to F_{x-h})$.
    \item (Associativity)  For any $x,h,k,l \in G$ and $\phi_1 \in \Hom(F_{x-h-k} \to F_{x-h-k-l})$, $\phi_2 \in \Hom(F_{x-h} \to F_{x-h-k})$, $\phi_3 \in \Hom(F_x \to F_{x-h})$, one has $\phi_1 \circ (\phi_2 \circ \phi_3)) = (\phi_1 \circ \phi_2) \circ \phi_3$.
    \item (Inverse)  For any $x, h \in G$ and $\phi \in \Hom(F_x \to F_{x-h})$, there exists $\phi^{-1} \in \Hom(F_{x-h} \to F_x)$ such that $\phi \circ \phi^{-1} = \id_{x-h}$ and $\phi^{-1} \circ \phi = \id_x$. 
\end{itemize}

.  For each $x \in G$, one can canonically identify $\Hom(F_x \to F_x)$ with the group ${}^* H_s$, since the ${}^* H_s$-equivariant automorphisms of $F_x$ are given precisely by left multiplication by elements of ${}^* H_s$.  Note that if $y \in F_x$ and $z \in F_{x-h}$ there is a unique element of $\Hom(F_x \to F_{x-h})$ that maps $y$ to $z$; we will denote this map by ${}^* \pi^\Box(y,z)$ (the reason for the notation $\pi^\Box$ will be clearer later).  Thus we have
\begin{equation}\label{piinv}
{}^*  \pi^\Box(y,z) = {}^* \pi^\Box(h_s y, h_s z)
\end{equation}
for any $h_s \in {}^* H_s$, $y \in F_x$, and $z \in F_{x-h}$, and
\begin{equation}\label{pizw}
{}^*  \pi^\Box(z,w) \circ {}^* \pi^\Box(y,z) = {}^* \pi^\Box(y,w)
\end{equation}
whenever $y \in F_x$, $z \in F_{x-h}$, and $w \in F_{x-h-k}$.

Now let ${\mathcal G}$ denote the collection of all pairs $(h, \phi)$ where $h \in G$ and $\phi \colon G \mapsto \biguplus_{x \in G} \Hom(F_x \to F_{x-h})$ is an internal function such that  $\phi(x) \in \Hom(F_x \to F_{x-h})$ for all $x \in G$.  We can define a group law
\begin{equation}\label{gan}
 (k, \psi) (h, \phi) \coloneqq (h+k, \psi(\cdot-h) \circ \phi).
 \end{equation}
One easily verifies that this gives ${\mathcal G}$ the structure of an (internal) group.  Observe that for any $g \in {\mathcal S}$, one can define an internal group homomorphism $\aderiv g \colon G \to {\mathcal G}$ by the formula
$$ \aderiv g(h) \coloneqq (h, \aderiv_h g)$$
where the function $\aderiv_h g$ is defined by $\aderiv_h g(x) \coloneqq {}^* \pi^\Box( g(x), g(x-h) )$ for any $x,h \in G$.  From \eqref{pizw} one easily verifies that $\aderiv g$ is indeed an internal homomorphism; from \eqref{piinv} we also see that $\aderiv(h_s g) = \aderiv(g)$ whenever $h_s \in {}^* H_s$ is a constant.  In the converse direction, if one has an internal group homomorphism $h \mapsto (h,\phi_h)$ from $G$ to ${\mathcal G}$, then we claim that this homomorphism must be of the form $\aderiv g$ for some $g \in {\mathcal S}$, which is unique up to left-multiplication by constants $h_s \in {}^* H_s$.  Indeed, if we let $y_0$ be an arbitrary element of $F_0$, we can define $g(h)$ to be the unique element of $F_h$ such that
\begin{equation}\label{phih}
 \phi_{-h}(0) = \pi^\Box( y_0, g(h) )
 \end{equation}
(i.e., $g(h)$ is the image of $y_0$ under the equivariant map $\phi_{-h}(0) \in \Hom(F_0 \to F_h)$).  For any $h,k \in G$, one sees from \eqref{gan} and the group homomorphism hypothesis that
$$ \phi_{h+k}(0) = \phi_k(-h) \circ \phi_h(0)$$
and thus by \eqref{pizw}, \eqref{phih}
$$ \phi_k(-h) = {}^* \pi^\Box(g(-h), g(-h-k)).$$
From this and a change of variables we see that $\aderiv g(h) = (h, \phi_h)$, giving existence.  If we have $\aderiv g = \aderiv g'$ for some other $g' \in {\mathcal S}$, then writing $g' = h_s g$ for some internal $h_s \colon G \to {}^* H_s$, we see from (the converse of) \eqref{piinv} that $h_s(x-h) = h_s(x)$ for all $x,h \in G$, and hence $h_s$ is constant, giving the uniqueness claim.

Let $g^{**}$ be as in Lemma \ref{nlift}.
Define the external subgroup ${\mathcal G}^{**}$ of ${\mathcal G}$ to be the set of all pairs $(h,\phi) \in {\mathcal G}$ such that $\st \phi(x) = \st \aderiv_h g^{**}(x)$ for $\mu_G$-almost all $x \in G$.  Thus for instance $(h, \aderiv_h g^{**}) \in {\mathcal G}^{**}$ for every $h \in G$. It is easy to verify that ${\mathcal G}^{**}$ is an external subgroup of ${\mathcal G}$.

It will now suffice to show

\begin{proposition}[Existence of an exact group homomorphism]\label{exact-prop}  There exists an internal group homomorphism from $G$ to ${\mathcal G}^{**}$ of the form $h \mapsto (h,\phi_h)$, which is also ``exact'' in the sense that it is of the form $\aderiv g$ for some internal polynomial $g \in {\mathcal S}$. 
\end{proposition}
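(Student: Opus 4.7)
My plan is to construct the polynomial lift $g$ by applying a Loeb-infinitesimal $H_s$-valued correction to the (generally non-polynomial) lift $g^{**}$ provided by Lemma \ref{nlift}. Since ${\mathcal S}$ is a torsor for internal functions $G \to {}^* H_s$, every element of ${\mathcal S}$ can be uniquely written as $c \cdot g^{**}$ for some internal $c \colon G \to {}^* H_s$. The induced homomorphism $\aderiv(c g^{**})$ lands in ${\mathcal G}^{**}$ exactly when $\st c(x) = 1$ for $\mu_G$-almost every $x$, after normalising $c$ by a multiplicative constant. Thus the task reduces to finding such a $c$ for which $c \cdot g^{**}$ is an internal polynomial map of degree $s$ into ${}^*(H/\Gamma)$.

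Next I would identify the obstruction. Since $\overline g$ is polynomial into the degree $s{-}1$ nilmanifold $H/H_s\Gamma$, the iterated derivatives of any lift $g^{**}\in {\mathcal S}$, interpreted through the groupoid maps $\pi^\Box$, take values in the central compact subgroup ${}^* H_s$ once we have iterated at least $s$ times. Consequently the top-order defect
$$\chi(x;h_1,\dots,h_{s+1}) \coloneqq \aderiv_{h_1}\cdots\aderiv_{h_{s+1}} g^{**}(x)$$
is a well-defined internal function $G^{s+2} \to {}^* H_s$. By the almost-polynomial hypothesis on $\tilde g$, together with $\tilde g = \st g^{**}$ $\mu_G$-a.e.\ and Lemma \ref{fubini}(ii), $\chi$ is Loeb-infinitesimal on a set of full $\mu_{G^{s+2}}$-measure. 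Centrality of $H_s$ and the Leibniz factoring of iterated derivatives then show that $c\cdot g^{**}$ is polynomial of degree $s$ if and only if $c$ solves the \emph{abelian} coboundary equation
$$\aderiv_{h_1}\cdots\aderiv_{h_{s+1}} c(x) = \chi(x;h_1,\dots,h_{s+1})^{-1}\quad\text{for all }(x,h_1,\dots,h_{s+1})\in G^{s+2}.$$

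I would solve this by passing to the Lie algebra of $H_s$. The compact abelian Lie group $H_s$ decomposes as a product of a torus $T = V/\Lambda$ (with $V$ a finite-dimensional real vector space and $\Lambda$ a lattice) and a finite abelian group $F$. On the $F$-component, Loeb-infinitesimal values must be identically the identity on the full-measure set; a rigidity argument in the spirit of Lemma \ref{near-const} propagates this to $\chi \equiv 1$ in the $F$-direction, so no correction is needed there. On the $T$-component, the infinitesimal size of $\chi$ permits a local lift to ${}^* V$; the cocycle identities satisfied by $\chi$, which hold exactly because it is a genuine $(s{+}1)$-fold iterated derivative, let me apply Lemma \ref{cocy-triv} iteratively in each of the difference variables $h_1,\dots,h_{s+1}$, producing an internal, bounded (hence infinitesimal) function $\tilde c\colon G \to {}^* V$ whose $(s{+}1)$-fold derivative realises the lift of $\chi^{-1}$. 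Projecting $\tilde c$ through $V \to T \hookrightarrow H_s$ yields the desired $c$; then $g \coloneqq c\cdot g^{**}$ is an internal polynomial element of ${\mathcal S}$ with $\aderiv g \in {\mathcal G}^{**}$.

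The main obstacle I anticipate is extending the coboundary trivialisation from the full-measure set where $\chi$ is infinitesimal to an \emph{everywhere}-valid internal solution: Lemma \ref{cocy-triv} produces a pointwise-everywhere primitive once the cocycle itself is given everywhere, so the bridge relies on the internal nature of $\chi$ together with rigidity results in the vein of Lemma \ref{near-const} and Corollary \ref{close}, which promote algebraic identities from holding $\mu_G$-a.e.\ to holding everywhere whenever enough polynomial or homomorphism structure is present. A further technical nuisance is the torsion component of $H_s$ and the iterative application of Lemma \ref{cocy-triv}, which must be organised so that the residual cocycle identities are preserved at each stage of the $s{+}1$-fold integration.
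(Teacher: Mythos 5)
Your proposal takes a genuinely different route from the paper, and there are real gaps that would have to be filled to carry it out. The paper proves this proposition by induction on the degree $s$: it first shows (via the equivalence of ``$\aderiv g$ lands in ${\mathcal G}_0$'' with ``$g$ is polynomial'') that it suffices to build an internal group homomorphism $h \mapsto (h,\phi_h)$ into ${\mathcal G}_0 \cap {\mathcal G}^{**}$; it then obtains a first candidate $\phi'_h$ by applying the degree $s-1$ case of the theorem to $\st\aderiv_h g^{**}$ viewed as an almost-polynomial into the degree $s-1$ nilmanifold $\pi^\Box(H^\Box/\Gamma^\Box)$; and it repairs this candidate in three stages (fiber correction, internality in $h$ via Corollary \ref{close} and underspill, and homomorphism correction via Lemma \ref{cocy-triv} applied to the constant-valued degree-one cocycle $c(h,k) \in {}^* H_s$). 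The crucial simplification is that the induction keeps the cocycle to be integrated at order one in two variables, which is exactly the setting of Lemma \ref{cocy-triv}. Your proposal bypasses the induction entirely and tries to solve a degree $s+1$ coboundary equation $\aderiv_{h_1}\cdots\aderiv_{h_{s+1}} c(x) = \chi(x;h_1,\dots,h_{s+1})^{-1}$ in a single step, which is where the difficulty lies.

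Concretely, the gaps are as follows. First, $\chi$ is not literally an iterated derivative of $g^{**}$ in $H_s$: the intermediate objects $\aderiv_{h_1}\cdots\aderiv_{h_j} g^{**}(x)$ for $1 \leq j \leq s$ live in progressively more elaborate groupoid-morphism spaces (sections of $\Hom$-torsors, not elements of ${}^* H_s$), so the symmetry and slot-wise cocycle identities for $\chi$ that your coboundary scheme requires need a genuine argument and are not free from ``being a derivative.'' The well-definedness of $\chi$ itself (as the obstruction class in the $H_s$-valued quotient of the fiber of $\pi^{\{0,1\}^{s+1}}$ over the $\overline{g}$-cube) can be arranged, but this is exactly the sort of structural verification that the paper avoids by working with $\aderiv_h g^{**}$ one derivative at a time. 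Second, Lemma \ref{cocy-triv} only solves a two-variable, degree-one cocycle equation; ``iterating'' it $s+1$ times requires showing at each stage that the residual is still a cocycle in the remaining variables, and controlling the norm blow-up across iterations so that underspill and the infinitesimality of $\st c$ survive. This is nontrivial bookkeeping that the paper's inductive scheme never has to face, because the cocycle there is always of the bottom degree. Third, the torsion component of $H_s$ is waved at with ``a rigidity argument in the spirit of Lemma \ref{near-const}''; in the paper, the analogous rigidity is captured once and for all in Corollary \ref{close} (via the no-small-subgroups property of the Lie group) and is invoked at each inductive stage, so no ad hoc torsion argument is needed at the top. In short, the reduction to an abelian top-order coboundary is a sensible strategy, but as written it skips the paper's organizing idea of inducting on $s$ and leaves the main technical burden — establishing the identities for $\chi$ and performing the $(s+1)$-fold integration — unaddressed.
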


Indeed, $\phi_h, g$ obeyed the conclusions of Proposition \ref{exact-prop}, then by Lemma \ref{fubini}(ii) there exists $x_0 \in G$ such that $\st \aderiv_h g(x_0) = \st \phi_h(x_0) = \st \aderiv_h g^{**}(x_0)$ for $\mu_G$-almost all $h \in G$.  Writing $g = h_s g^{**}$ for some internal $h_S \colon G \to {}^* H_s$, this implies that $\st h_s(x_0-h) = \st h_s(x_0)$ for $\mu_G$-almost all $h \in G$, thus the internal polynomial map $g' \coloneqq h_s(x_0)^{-1} g$ is an element of ${\mathcal S}$ with $\st g'(x) = \st g^{**}(x) = \tilde g(x)$ for $\mu_G$-almost all $x \in G$.

It remains to establish Proposition \ref{exact-prop}.  The next step is to interpret the internal maps $\phi \colon G \mapsto \biguplus_{x \in G} \Hom(F_x \to F_{x-h})$ appearing in the definition of ${\mathcal G}$ as maps between filtered spaces, using the algebraic formalism from \cite[\S 7]{gt1}, in order to reformulate the exactness condition in a more tractable form.  We form the prefiltered group $H^\Box$ to be the group $H \times H$ equipped with the prefiltration
$$ (H^\Box)_i \coloneqq \{ (h_i, h_{i+1} h_i): h_i \in H_i, h_{i+1} \in H_{i+1}\}.$$
The significance of this group is that we have the identity
\begin{equation}\label{hhh}
\HK^{k+1}(H) = \HK^k(H^\Box)
\end{equation}
for all $k \geq 0$, by using the identification
$$ (h_\omega)_{\omega \in \{0,1\}^{k+1}} \equiv ((h_{\omega,0}, h_{\omega,1}))_{\omega \in \{0,1\}^k}.$$
Indeed, this can be checked by noting that every generator of $\HK^{k+1}(H)$ can be expressed (using this identification) as a finite combination of generators of $\HK^k(H^\Box)$, and conversely.  We define $\Gamma^\Box$ similarly.  It is easy to verify that $H^\Box/\Gamma^\Box$ is a prefiltered nilmanifold of degree $s$ (as a nilmanifold it is just the Cartesian square of $H/\Gamma$).  Note that $H^\Box_s = \{ (h_s,h_s): h_s \in H_s \}$ is a central subgroup of $H^\Box$ (and $\Gamma^\Box_s$ is trivial).  We let $\pi^\Box \colon H^\Box/\Gamma^\Box \to H^\Box/(H^\Box)_s \Gamma^\Box$ be the projection map; this is a nilspace morphism to a prefiltered nilmanifold $H^\Box/(H^\Box)_s \Gamma^\Box$ of degree $s-1$.

Observe that if $y \in F_x$ and $z \in F_{x-h}$ for some $x,h \in G$, then ${}^* \pi^\Box(y,z)$, by definition, is the equivalence class $\{ (h_s y, h_s z) \colon h_s \in {}^* H_s \}$.  By abuse of notation we may therefore identify ${}^* \pi^\Box(y,z)$ with the unique element of $\Hom(F_x \to F_{x-h})$ mapping $y$ to $z$, thus making this use of the notation ${}^* \pi^\Box(y,z)$ consistent with the one used earlier.  In particular, we can view $\biguplus_{x \in G} \Hom(F_x \to F_{x-h})$ as an internal subset of the internal filtered nilmanifold ${}^* \pi^\Box(H^\Box/\Gamma^\Box)$, and it now makes sense to ask when the map $\phi \colon G \mapsto \biguplus_{x \in G} \Hom(F_x \to F_{x-h})$ is an internal polynomial or not.

We define ${\mathcal G}_0$ to be the internal subgroup of ${\mathcal G}$ consisting of those pairs $(h,\phi) \in {\mathcal G}$ with $\phi$ an internal polynomial.  It is easy to see that this is indeed an internal subgroup.

We now make the following key observations, which generalise the familiar fact that a (differentiable) function of one variable is a polynomial of degree $s$ if and only if its derivative is a polynomial of degree $s-1$:

\begin{itemize}
\item[(i)] If $g \in {\mathcal S}$ is a polynomial, then $\aderiv g$ takes values in ${\mathcal G}_0$ (i.e., $\aderiv_h g$ is a polynomial for every $h \in G$).
\item[(ii)] Conversely, if $g \in {\mathcal S}$ is such that $\aderiv g$ takes values in ${\mathcal G}_0$,
then $g$ is a polynomial.
\end{itemize}

The direction (i) is immediate from \eqref{hhh} (and its counterpart for $\Gamma$).  For the converse direction (ii), suppose that $\aderiv_h g$ is a polynomial for every $h \in G$.  Expanding out the definitions, we conclude in particular that
$$
\left( {}^* \pi^\Box( (g(x+\omega \cdot h), g(x+\omega \cdot h + h_{s+1})) ) \right)_{\omega \in \{0,1\}^s} \in \HK^s( {}^* \pi^\Box( H^\Box / \Gamma^\Box ) )$$
for all $x,h_1,\dots,h_{s+1} \in G$, where $h \coloneqq (h_1,\dots,h_s)$. Since $\pi^\Box$ quotients out $H^\Box/\Gamma^\Box$ by the central order $s$ group $(H^\Box)_s$, we conclude from the definition of $\HK^s$ that
$$
\left(  (g(x+\omega \cdot h), g(x+\omega \cdot h + h_{s+1})) \right)_{\omega \in \{0,1\}^s} \in \HK^s( H^\Box / \Gamma^\Box  ).$$
By \eqref{hhh} this implies that
$$
\left(  g(x+\omega' \cdot h' ) \right)_{\omega' \in \{0,1\}^{s+1}} \in \HK^{s+1}( H / \Gamma )$$
for all $x,h_1,\dots,h_{s+1} \in G$, where $h' \coloneqq (h_1,\dots,h_{s+1})$.  Because $H/\Gamma$ is a nilspace of degree at most $s$ (see \cite[Proposition 3.10]{gmv}), this implies that
$$
\left(  g(x+\omega_1 h_1 + \dots + \omega_k h_k ) \right)_{\omega_1,\dots,\omega_k \in \{0,1\}} \in \HK^k( H / \Gamma )$$
for any standard $k \geq 0$ and $x,h_1,\dots,h_{k} \in G$.  Thus $g$ is a polynomial as required.

In view of the above equivalence, the task of proving Proposition \ref{exact-prop} (and hence Proposition \ref{propstab}) now reduces to establishing

\begin{proposition}[Existence of a group homomorphism]\label{hom-prop}  There exists an internal group homomorphism from $G$ to ${\mathcal G}_0 \cap {\mathcal G}^{**}$ of the form $h \mapsto (h,\phi_h)$. 
\end{proposition}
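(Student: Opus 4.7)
The proof of Proposition \ref{hom-prop} proceeds by induction on $s$, run in parallel with the induction for Proposition \ref{propstab}; the case $s=0$ is trivial. For the inductive step, I fix $h \in G$ and observe that $\aderiv_h g^{**} \colon G \to {}^* \pi^\Box(H^\Box/\Gamma^\Box)$, sending $x \mapsto {}^*\pi^\Box(g^{**}(x), g^{**}(x-h))$, is an almost polynomial map into a prefiltered nilmanifold of degree at most $s-1$: via the identification \eqref{hhh}, each $s$-dimensional Host--Kra cube of this map corresponds to an $(s+1)$-dimensional Host--Kra cube of $g^{**}$, while any residual $(H^\Box)_s$ contribution is killed by $\pi^\Box$. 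By the inductive instance of Proposition \ref{propstab} at degree $s-1$, we obtain, for each $h$, an internal polynomial lift $\phi_h \colon G \to {}^*\pi^\Box(H^\Box/\Gamma^\Box)$ with $\st \phi_h(x) = \st \aderiv_h g^{**}(x)$ for $\mu_G$-almost every $x$. Corollary \ref{close} shows $\phi_h$ is unique up to left-multiplication by a constant in ${}^*H_s$ infinitesimally close to the identity, and a standard saturation/overspill argument lets us select the family $(\phi_h)_{h \in G}$ internally in $h$.

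With $(h, \phi_h) \in \mathcal{G}_0 \cap \mathcal{G}^{**}$ for every $h$, I measure the obstruction to being a homomorphism by
$$ c(h, k) \coloneqq \phi_{h+k} \circ \bigl(\phi_k(\cdot - h) \circ \phi_h\bigr)^{-1}. $$
Both $\phi_{h+k}$ and $\phi_k(\cdot - h) \circ \phi_h$ are internal polynomial maps into ${}^*\pi^\Box(H^\Box/\Gamma^\Box)$ (composition and translation preserve polynomiality since $\mathcal{G}_0$ is an internal subgroup of $\mathcal{G}$), and their standard parts agree identically by the composition identity \eqref{pizw} for ${}^*\pi^\Box$. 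By Corollary \ref{close}, $c(h, k)$ is therefore a constant element of ${}^*H_s$ infinitesimally close to the identity. Equating the two expressions for $\phi_{h+k+l}$ obtained by iterating the defining relation for $c$, and using the centrality of $H_s$, yields the multiplicative cocycle identity
$$ c(h+k, l)\,c(h, k) = c(h, k+l)\,c(k, l). $$

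It remains to trivialize this cocycle. Since $H_s$ is a standard compact abelian Lie group and $c$ is valued in an infinitesimal neighborhood of the identity, I write $c = \exp \tilde c$ for an internal infinitesimal $\tilde c \colon G \times G \to {}^*V$, where $V \coloneqq \mathrm{Lie}(H_s)$; as $\exp$ is a local group isomorphism and $H_s$ is abelian, the multiplicative cocycle identity for $c$ translates into the additive cocycle identity \eqref{cocycl} for $\tilde c$. Lemma \ref{cocy-triv} then supplies an internal infinitesimal $\tilde f \colon G \to {}^*V$ with $\tilde c(h, k) = \tilde f(h+k) - \tilde f(h) - \tilde f(k)$. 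Setting $f \coloneqq \exp \tilde f \colon G \to {}^*H_s$ and $\psi_h \coloneqq f(h)^{-1} \phi_h$, the centrality of $H_s$ gives the homomorphism relation $\psi_{h+k} = \psi_k(\cdot - h) \circ \psi_h$; since $\st f(h) = 1$, multiplication by $f(h)^{-1}$ preserves both polynomiality and membership in $\mathcal{G}^{**}$, so $h \mapsto (h, \psi_h)$ is the desired internal homomorphism into $\mathcal{G}_0 \cap \mathcal{G}^{**}$.

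The main obstacle is the internalization of the pointwise family $(\phi_h)_{h \in G}$ supplied by the inductive hypothesis; once that is arranged, the cohomological trivialization in the final paragraph is routine, relying on Lemma \ref{cocy-triv} applied in the Lie algebra of the central compact abelian group $H_s$.
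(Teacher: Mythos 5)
Your argument follows the same route as the paper: lift $\aderiv_h g^{**}$ by the inductive hypothesis, use Corollary~\ref{close} for uniqueness, measure the cocycle obstruction, and trivialize it with Lemma~\ref{cocy-triv}. But two genuinely necessary steps are omitted, and the second one leaves the proof incomplete.

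First, the lift $\phi_h$ supplied by the inductive instance of Proposition~\ref{propstab} is merely an internal polynomial into ${}^*\pi^\Box(H^\Box/\Gamma^\Box)$ whose standard part agrees a.e.\ with that of $\aderiv_h g^{**}$. That does \emph{not} by itself put $(h,\phi_h)$ in $\mathcal G$, because $\phi_h(x)$ need not lie in the fiber $\Hom(F_x\to F_{x-h})$ over $(\overline g(x),\overline g(x-h))$. One must correct $\phi_h$ by an $x$-dependent infinitesimal factor (not just a constant) so that the corrected map lands in $\biguplus_x\Hom(F_x\to F_{x-h})$; only after this does Corollary~\ref{close} pin $\phi_h$ down up to a constant in ${}^*H_s$ rather than a constant in the ambient group ${}^*\pi^\Box(H^\Box)$. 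Your uniqueness claim ``up to a constant in ${}^*H_s$'' silently assumes this correction has already been made.

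Second, and more substantively: the inductive hypothesis only produces $\phi_h$ for $\mu_G$-\emph{almost all} $h$, not all $h$, and the normalization-plus-underspill argument that internalizes the family works on an internal subset $E\subset G$ of full Loeb measure, not on all of $G$. Your phrase ``select the family $(\phi_h)_{h\in G}$ internally in $h$'' overclaims. Consequently the cocycle $c(h,k)$ is only defined for $h,k,h+k\in E$, Lemma~\ref{cocy-triv} (which is explicitly formulated for exactly this partial domain $E$) yields a primitive $f\colon E\to {}^*H_s$, and the resulting $h\mapsto(h,\psi_h)$ is an internal homomorphism only on $E$. You then need to invoke Lemma~\ref{repair}, applied to the almost-homomorphism obtained by extending $h\mapsto(h,\psi_h)$ arbitrarily (internally) to all of $G$, to produce the genuine internal homomorphism from $G$ to $\mathcal G_0\cap\mathcal G^{**}$. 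As written your proof stops before this final repair step, so it does not actually produce a homomorphism defined on all of $G$.
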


By hypothesis, $\tilde g$ is an almost polynomial, hence $\st g^{**}$ is also (since these two maps agree $\mu_G$-almost everywhere).  From \eqref{hhh} and Lemma \ref{fubini}(ii) we conclude that $\st \aderiv_h g^{**} \colon G \to \pi^\Box(H^\Box/\Gamma^\Box)$ is an almost polynomial map for $\mu_G$-almost all $h \in G$.  Applying the induction hypothesis (and the axiom of choice), we conclude 

\begin{proposition}[First approximation to $\phi_h$]\label{first-phi}  For $\mu_G$-almost all $H \in G$, there exists an internal polynomial $\phi'_h \colon G \to {}^* \pi^\Box(H^\Box/\Gamma^\Box)$ such that $\st \aderiv_h g^{**}(x) = \st \phi'_h(x)$ for $\mu_G$-almost all $x \in G$.  In particular, $(h, \phi'_h) \in {\mathcal G}_0$.  
\end{proposition}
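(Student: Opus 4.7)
The proof will reduce to the inductive hypothesis applied to the lower-degree prefiltered nilmanifold $\pi^\Box(H^\Box/\Gamma^\Box)$, which is of degree $s-1$ since $(H^\Box)_s$ is central and has been quotiented out. Concretely, the plan is to verify that for $\mu_G$-almost every $h \in G$, the map $x \mapsto \st \aderiv_h g^{**}(x)$ is an almost polynomial map into $\pi^\Box(H^\Box/\Gamma^\Box)$, at which point Proposition \ref{propstab} (applied inductively for $s-1$) supplies the internal polynomial $\phi'_h$.

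The key computation, for a standard $k \geq 0$ and a cube $(x_\omega)_{\omega \in \{0,1\}^k} \in \HK^k(G)$, is to expand
$$(\st \aderiv_h g^{**}(x_\omega))_{\omega \in \{0,1\}^k} = ({}^*\pi^\Box)^{\{0,1\}^k}\bigl((\st g^{**}(x_\omega),\, \st g^{**}(x_\omega - h))_{\omega \in \{0,1\}^k}\bigr).$$
Via the identity \eqref{hhh} (and its quotient by $\Gamma$), the inner tuple, viewed in $(H/\Gamma)^{\{0,1\}^{k+1}}$, is precisely a $(k+1)$-cube in $\HK^{k+1}(G)$ with last direction $-h$. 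Since $\tilde g$ agrees $\mu_G$-almost everywhere with $g^{**}$, the map $\st g^{**}$ inherits the almost polynomial property of $\tilde g$ at every standard degree $k+1$, so this $(k+1)$-cube lies in $\HK^{k+1}(H/\Gamma)$ for $\mu_{\HK^{k+1}(G)}$-almost every choice of parameters. Applying Lemma \ref{fubini}(ii) to slice in the $h$-coordinate, we deduce that for $\mu_G$-almost every $h$ the corresponding $k$-cube condition holds for $\mu_{\HK^k(G)}$-almost every $(x_\omega)$. Pushing forward by the nilspace morphism $\pi^\Box$, which carries $\HK^{k+1}(H/\Gamma) = \HK^k(H^\Box/\Gamma^\Box)$ into $\HK^k(\pi^\Box(H^\Box/\Gamma^\Box))$, finishes the verification for each standard $k$.

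Taking a countable intersection over all standard $k$ yields a single $\mu_G$-conull set of $h$ for which $\st \aderiv_h g^{**}$ is almost polynomial into the standard degree $s-1$ prefiltered nilmanifold $\pi^\Box(H^\Box/\Gamma^\Box)$. For each such $h$, the inductive case of Proposition \ref{propstab} produces an internal polynomial $\phi'_h \colon G \to {}^*\pi^\Box(H^\Box/\Gamma^\Box)$ with $\st \phi'_h(x) = \st \aderiv_h g^{**}(x)$ for $\mu_G$-almost all $x$; the axiom of choice assembles these into a family indexed by $h$. Membership $(h,\phi'_h) \in \mathcal{G}_0$ is then immediate from the definition of $\mathcal{G}_0$, since $\phi'_h$ is by construction an internal polynomial.

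The main technical subtlety lies in the proper use of Lemma \ref{fubini}(ii): the Loeb measure on $\HK^{k+1}(G)$ (which, for the abelian filtration on $G$, is identified with Loeb measure on $G^{k+2}$) does not split as a product of copies of the Loeb measure on $G$, so the classical Fubini theorem is unavailable; the Keisler--Fubini theorem is what lets one transfer a single $\mu_{\HK^{k+1}(G)}$-null set into slicewise $\mu_{\HK^k(G)}$-null sets parametrized by $\mu_G$-almost every $h$. Everything else is a routine bookkeeping exercise in the definitions of $\aderiv_h$, ${}^*\pi^\Box$, and the filtered group $H^\Box$.
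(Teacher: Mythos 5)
Your proposal is correct and follows the same route as the paper's (very terse) proof: transfer the almost-polynomial property from $\tilde g$ to $\st g^{**}$ using their $\mu_G$-almost-everywhere agreement, reinterpret the $(k+1)$-cube condition for $\st g^{**}$ as a $k$-cube condition for $\st \aderiv_h g^{**}$ via the identification $\HK^{k+1}(H/\Gamma) = \HK^k(H^\Box/\Gamma^\Box)$ of \eqref{hhh}, slice in the $h$-coordinate with Lemma \ref{fubini}(ii), push forward by $\pi^\Box$, take a countable intersection over standard $k$, and close by the inductive hypothesis plus the axiom of choice. The paper compresses all of this into three sentences; your write-up makes the Keisler--Fubini slicing and the cube identification explicit, which are indeed the only nontrivial points.
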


This begins to look like what we want; however, there are currently three undesirable features of this map $\phi'_h$ that prevent us from simply taking $\phi_h$ to equal $\phi'_h$:

\begin{itemize}
\item[(i)] It is not necessarily the case that $(h,\phi'_h)$ lies in ${\mathcal G}_0$, because $\phi'_h(x)$ need not lie in $\Hom(F_x \to F_{x-h})$.
\item[(ii)]  Even if (i) was resolved, it is not necessarily the case that $\phi'_h$ depends in an internal fashion on $h$ (due to the use of the axiom of choice).
\item[(iii)] Even if (i) and (ii) were resolved, it is not necessarily the case that the map $h \mapsto (h,\phi'_h)$ is a group homomorphism.
\end{itemize}

We now address each of these issues in turn.  We begin by resolving (i):

\begin{proposition}[Second approximation to $\phi_h$]\label{second-phi} For almost all $h \in G$, there exists an internal polynomial $\phi''_h \in \Hom(F_x \to F_{x-h})$ such that $\st \aderiv_h g^{**}(x) = \st \phi''_h(x)$ for $\mu_G$-almost all $x \in G$.  In particular, $(h, \phi''_h) \in {\mathcal G}_0 \cap {\mathcal G}^{**}$.
\end{proposition}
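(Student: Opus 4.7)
The strategy is to produce $\phi''_h$ by correcting $\phi'_h$ via an infinitesimal left-translation in ${}^*H^\Box$, chosen so that its image lands exactly in $\Hom(F_x\to F_{x-h})$ rather than merely in the ambient quotient ${}^*\pi^\Box(H^\Box/\Gamma^\Box)$. The only obstruction to $\phi'_h(x)\in\Hom(F_x\to F_{x-h})$ is that the two $\pi$-projections of $\phi'_h(x)$ need not equal $\overline g(x)$ and $\overline g(x-h)$; I expect this discrepancy to be an infinitesimal \emph{constant} in $x$, and Corollary \ref{close} is precisely the rigidity tool that will let me extract such a constant.

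To execute this, I first note that $\pi^\Box(H^\Box/\Gamma^\Box)=H^\Box/(H^\Box)_s\Gamma^\Box$ carries a natural continuous nilspace morphism $\mathrm{pr}\colon [(y,z)]\mapsto(\pi(y),\pi(z))$ into the filtered nilmanifold $(H/H_s\Gamma)^2$ (with the product filtration) of degree $s-1$; this is well-defined because the diagonal ${}^*H_s$-action is annihilated by $\pi$, and it preserves the Host--Kra cube structure via the identification $\HK^k(H^\Box)=\HK^{k+1}(H)$ of \eqref{hhh}. Composition gives an internal polynomial map $\alpha(x)\coloneqq\mathrm{pr}(\phi'_h(x))$, which I compare against the internal polynomial $\beta(x)\coloneqq(\overline g(x),\overline g(x-h))$. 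Because $g^{**}\in\mathcal S$ implies $\pi\circ g^{**}=\overline g$ identically, and $\st\phi'_h(x)=\st\aderiv_h g^{**}(x)$ for $\mu_G$-a.e. $x$, continuity of $\mathrm{pr}$ yields $\st\alpha(x)=\st\beta(x)$ for $\mu_G$-a.e. $x$. Corollary \ref{close} applied to the degree $s-1$ nilmanifold $(H/H_s\Gamma)^2$ then produces an infinitesimal constant $c=(c_1,c_2)\in{}^*(H\times H)$, with $\st c=(1,1)$, such that $\alpha(x)=c\cdot\beta(x)$ for every $x\in G$.

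I would then define $\phi''_h(x)\coloneqq(c_1^{-1},c_2^{-1})\cdot\phi'_h(x)$, using the left-action of ${}^*H^\Box$ on ${}^*\pi^\Box(H^\Box/\Gamma^\Box)$. By construction the two $\pi$-projections of $\phi''_h(x)$ become exactly $\overline g(x)$ and $\overline g(x-h)$, so $\phi''_h(x)\in\Hom(F_x\to F_{x-h})$ for every $x$; left-translation by a fixed group element preserves internal polynomiality, so $\phi''_h$ remains an internal polynomial; and $\st(c_1^{-1},c_2^{-1})=(1,1)$ ensures $\st\phi''_h(x)=\st\aderiv_h g^{**}(x)$ for $\mu_G$-a.e. $x$, giving $(h,\phi''_h)\in\mathcal G_0\cap\mathcal G^{**}$.

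The main obstacle is the verification that $\mathrm{pr}$ is a genuine nilspace morphism into $(H/H_s\Gamma)^2$ with the product filtration, so that $\alpha$ is actually polynomial in the sense required by Corollary \ref{close}. After unwinding the identifications, this reduces via \eqref{hhh} to the observation that each component $\pi\circ\pi_i$ extracts a $k$-face from a $(k+1)$-cube in $H/\Gamma$ and then projects to $H/H_s\Gamma$, both of which are standard nilspace operations. The remaining bookkeeping---that $(H^\Box)_s$ is the diagonal copy of $H_s$ because the prefiltration has degree $s$ (so $H_{s+1}=1$), and that $H^\Box$ acts on the left of ${}^*\pi^\Box(H^\Box/\Gamma^\Box)$---is routine unwinding of the definitions.
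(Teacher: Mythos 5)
Your strategy is correct and matches the paper's structure — both start from $\phi'_h$ of Proposition~\ref{first-phi}, compare its $\tilde\pi$-projection to $(\overline g(\cdot),\overline g(\cdot-h))$, and push $\phi'_h$ by a small left translation so that it lands fiberwise in $\Hom(F_x\to F_{x-h})$. The one genuinely useful thing you do differently is to invoke Corollary~\ref{close} on the degree $s-1$ nilmanifold $(H/H_s\Gamma)^2$ to extract a correction that is a \emph{constant} $c=(c_1,c_2)$ independent of $x$. The paper instead writes the discrepancy as internal functions $\overline c_1(x,h),\overline c_2(x,h)$ of both variables and immediately declares $\phi''_h(x)={}^*\pi^\Box(c_1(x,h),c_2(x,h))^{-1}\phi'_h(x)$ to lie in $\mathcal G_0$ — but that conclusion only obviously follows if the left factor is constant in $x$ (a constant tuple $(c^{-1})_\omega$ is always a Host--Kra cube, whereas $(c(x+\omega\cdot h',h)^{-1})_\omega$ for a general internal $c$ need not be), so your explicit appeal to the rigidity result Corollary~\ref{close} is exactly the observation needed to make this step airtight. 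Your argument correctly checks the remaining points: $(H/H_s\Gamma)^2$ is a bona fide filtered nilmanifold of degree $s-1$ with product filtration, $\mathrm{pr}=\tilde\pi$ is a nilspace morphism via \eqref{hhh}, and lifting $c$ from ${}^*(H/H_s)^2$ to ${}^*H^\Box$ with $\st c=1$ via a local section near the identity is unproblematic. In short: same strategy as the paper, with a gap in the paper's polynomiality claim for $\phi''_h$ properly closed.
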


\begin{proof}  Let $\phi'_h$ be as in Proposition \ref{first-phi}.
By Lemma \ref{fubini}(ii), we know that for $\mu_{G^2}$-almost all $(x,h) \in G$, that $\st \aderiv_h g^{**}(x) = \st \phi'_h(x)$.  Since $\aderiv_h g^{**}(x) \in \Hom(F_x \to F_{x-h})$, we thus see that
$$ \st {}^* \tilde \pi( \phi'_h(x) ) = \st {}^* \tilde \pi( \aderiv_h g^{**}(x) ) = \st (\overline{g}(x), \overline{g}(x-h))$$
where $\tilde \pi \colon \pi^\Box(H^\Box/\Gamma^\Box) \to \pi(H/\Gamma) \times \pi(H/\Gamma)$ is the projection map
$$ \tilde \pi \colon \pi^\Box(y,z) \mapsto (\pi(y), \pi(z)).$$
We conclude that 
$$ {}^* \tilde \pi( \phi'_h(x) ) =  (\overline{c}_1(x,h) \overline{g}(x), \overline{c}_2(x,h) \overline{g}(x-h))$$
for all $x,h \in G$ and some internal functions $\overline{c}_1, \overline{c}_2 \colon G \times G \to {}^* \pi(H)$ with the property that $\st \overline{c}_1(x,h) = \st \overline{c}_2(x,h)=1$ for $\mu_{G^2}$-almost all $(x,h) \in G$. Using some arbitrary continuous local lift of $\pi(H)$ up to $H$ around the identity, we can write $\overline{c}_i = \pi(c_i)$ for $i=1,2$ and some internal functions $c_1,c_2 \colon G \times G \to {}^* H$ with $\st c_1(x,h) = \st c_2(x,h) = 1$ for $\mu_{G^2}$-almost all $(x,h) \in G$.  If we then set
$$ \phi''_h(x) \coloneqq {}^* \pi^\Box(c_1(x,h), c_2(x,h))^{-1} \phi'_h(x)$$
then $(h,\phi''_h)$ is now an element of ${\mathcal G}_0$ for every $h$, and $\st \aderiv_h g^{**}(x) = \st \phi'_h(x) = \st \phi''_h(x)$ for $\mu_{G^2}$-almost all $(x,h) \in G^2$, so that $(h, \phi''_h) \in {\mathcal G}^{**}$ for $\mu_G$-almost all $h \in G$.  The claim follows.
\end{proof}

Now we address the difficulty (ii).  

\begin{proposition}[Third approximation to $\phi_h$]\label{third-phi} There exists an internal subset $E$ of $G$ of full Loeb measure and an internal map $h \mapsto (h,\phi'''_h)$ from $E$ to ${\mathcal G}_0 \cap {\mathcal G}^{**}$.
\end{proposition}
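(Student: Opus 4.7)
The goal is to upgrade the pointwise, choice-based existence of $\phi''_h$ from Proposition \ref{second-phi} into an internal selection on an internal full-measure set. The natural tool is overspill.

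For each $n \in {}^* \N$, I would consider the internal set $E_n \subset G$ consisting of those $h$ for which there exists an internal polynomial $\phi \colon G \to {}^* \pi^\Box(H^\Box/\Gamma^\Box)$ satisfying $\phi(x) \in \Hom(F_x \to F_{x-h})$ for every $x \in G$ and with $|\{x \in G \colon d(\phi(x), \aderiv_h g^{**}(x)) \geq 1/n\}| \leq |G|/n$, where $d$ is the internalized metric on $\pi^\Box(H^\Box/\Gamma^\Box)$. By Proposition \ref{second-phi} combined with Lemma \ref{fubini}(ii), for every standard $n$ the polynomial $\phi''_h$ certifies $h \in E_n$ for $\mu_G$-almost every $h$, so $\mu_G(E_n) = 1$, which is to say the internal ratio $|E_n|/|G|$ is infinitesimally close to $1$.

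Next I would invoke overspill: the internal subset $\{ n \in {}^* \N \colon |E_n|/|G| \geq 1 - 1/n \}$ of ${}^* \N$ contains every standard $n$, hence contains some unlimited $N$. Fix such $N$ and set $E \coloneqq E_N$; then $E$ is internal, and $\mu_G(E) = \st(|E_N|/|G|) = 1$. For each $h \in E$ I would internally select a witness $\phi'''_h$, for instance by fixing once and for all an internal well-ordering on the internal parameter space of polynomials of the relevant type and taking the least witness. The resulting map $h \mapsto (h, \phi'''_h)$ on $E$ is internal, and $(h, \phi'''_h) \in \mathcal{G}_0$ by construction. Since the defect set $\{x \colon d(\phi'''_h(x), \aderiv_h g^{**}(x)) \geq 1/N\}$ has internal density at most $1/N$, which is infinitesimal, it has Loeb measure zero; as $1/N$ itself is infinitesimal this gives $\st \phi'''_h(x) = \st \aderiv_h g^{**}(x)$ for $\mu_G$-almost every $x$, so $(h, \phi'''_h) \in \mathcal{G}^{**}$ as well.

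The main technical point that I expect to require care is verifying that the class of ``internal polynomial maps $\phi \colon G \to {}^* \pi^\Box(H^\Box/\Gamma^\Box)$ of degree at most $s$ taking $x$ into $\Hom(F_x \to F_{x-h})$ for every $x$'' is genuinely internally definable and internally parametrizable, so that both the definition of $E_n$ and the internal well-ordering used for selection are legitimate. This should reduce to a Mal'cev/Lazard--Leibman style description of such polynomials by a bounded amount of internal Taylor data in local coordinates on $H^\Box$, with the ${}^* H_s^\Box$-equivariance constraint cutting out an internal subspace; this is the main (though essentially routine) algebraic check underpinning the argument.
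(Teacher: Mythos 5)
Your proof is correct in outline but takes a genuinely different route from the paper's, and the difference is instructive. The paper resolves the non-internality of the choice $h \mapsto \phi''_h$ not by internal selection but by eliminating the ambiguity altogether: it normalizes the candidate lift at a single point $x_0$ (demanding $\phi'''_h(x_0) = \aderiv_h g^{**}(x_0)$) and then invokes the rigidity result Corollary~\ref{close} to conclude that any two witnesses satisfying both the approximation condition (with the infinitesimal $\eps$ obtained by underspill) and that normalization must coincide. Uniqueness makes the dependence on $h$ internal automatically, with no choice function needed. Your approach keeps the $H_s$-ambiguity in the witnesses and resolves it by an internal least-witness selection relative to a transferred well-ordering. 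This is workable, and your overspill step is essentially the mirror image of the paper's underspill step, but the selection step is heavier than you suggest: the parameter space of degree-$\leq s$ polynomials on $G_\n$ has dimension growing with the rank of $G_\n$, so one cannot fix a single standard well-ordering ``once and for all'' on a fixed $\R^D$; one must fix a well-ordering of $\R^{M_\n}$ at each level $\n$ and ultraproduct them (and verify this selection really is internal). You correctly flag the internal-parametrizability issue for the class of equivariant polynomial lifts, but the paper also needs internal definability of this class (to make $E_\eps$ internal) while \emph{not} needing any internal choice function on it, which is precisely the advantage bought by the normalization-plus-rigidity device. The net effect of your argument, were it carried through in detail, would be the same, but it trades a clean uniqueness statement for a somewhat delicate foundational point; the paper's route is the cleaner one here, and it also foreshadows how Corollary~\ref{close} is used throughout the rest of the appendix.
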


\begin{proof}
Let $\phi''_h$ be as in Proposition \ref{second-phi}.
By Lemma \ref{fubini}(ii), there exists $x_0 \in G$ such that $\st \aderiv_h g^{**}(x_0) = \st \phi''_h(x_0)$ for $\mu_G$-almost all $h \in G$.  Since $\aderiv_h g^{**}(x_0)$ and $\phi''_h(x_0)$ both lie in $\Hom(F_{x_0} \to F_{x_0-h})$, there exists an internal function $c \colon G \to {}^* H_s$ such that $\aderiv_h g^{**}(x_0) = c(h) \phi''_h(x_0)$ for all $h \in G$; by the construction of $x_0$, we have $\st c(h)=1$ for $\mu_G$-almost all $h \in G$.  

Fix a metric $d$ on $\pi^\Box(H^\Box / \Gamma^\Box)$, and for any $\eps>0$, let $E_\eps \subset G$ be the set of all $h \in G$ such that there exists $(h, \phi'''_h) \in {\mathcal G}_0$ with $\phi'''_h(x_0) = \aderiv_h g^{**}(x_0)$, and such that
\begin{equation}\label{stad}
 {}^* d( \phi'''_h(x), \aderiv_h g^{**}(x) ) \leq \eps
\end{equation}
for a set of $x$ in $G$ of Loeb measure at least $1-\eps$.  Clearly $E_\eps$ is an internal subset of $G$; from the above discussion (choosing $\phi'''_h = c(h) \phi''_h$) we see that $E_\eps$ has internal measure at least $1-\eps$ for every standard $\eps>0$, hence by the underspill principle (see e.g., \cite[Corollary 5.2(iv)]{tz-concat}) we can find an infinitesimal $\eps>0$ such that $E = E_\eps$ has internal measure at least $1-\eps$, and hence full Loeb measure.  Fixing such an $\eps$, we now see from Corollary \ref{close} that if $h \in E$, then the function $\phi'''_h$ defined above is unique, so $\phi'''_h$ depends internally on $h$ on this domain.  By construction $h \mapsto (h,\phi'''_h)$ is now an internal map from $E$ to ${\mathcal G}_0 \cap {\mathcal G}^{**}$, and the claim follows. 
\end{proof}

Now we address the issue (iii).  

\begin{proposition}[Fourth approximation to $\phi_h$]\label{fourth-phi} There exists an internal subset $E$ of $G$ of full Loeb measure and internal homomorphism $h \mapsto (h,\phi''''_h)$ from $E$ to ${\mathcal G}_0 \cap {\mathcal G}^{**}$.
\end{proposition}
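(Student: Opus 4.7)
The plan is to measure the failure of $\phi'''$ (from Proposition \ref{third-phi}) to be a homomorphism by an internal cocycle $c(h,k)(x) \in {}^* H_s$, trivialize it as a coboundary (via averaging plus a correction supplied by Lemma \ref{cocy-triv}), and then apply Lemma \ref{repair} to obtain a genuine homomorphism. For $h, k \in E$ with $h+k \in E$, both $\phi'''_{h+k}(x)$ and $\phi'''_k(x-h) \circ \phi'''_h(x)$ lie in $\Hom(F_x \to F_{x-h-k})$, which is a torsor under the free transitive left action of ${}^* H_s$ on $F_{x-h-k}$; hence there is a unique $c(h,k)(x) \in {}^* H_s$ with
\[ \phi'''_{h+k}(x) = c(h,k)(x) \cdot (\phi'''_k(x-h) \circ \phi'''_h(x)), \]
and $c(h,k) \colon G \to {}^* H_s$ is an internal polynomial. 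The group ${}^* H_s$ is abelian since $[H_s,H_s] \subset H_{2s} = \{1\}$ for $s \geq 1$; switching to additive notation and expanding $\phi'''_{h+k+l}(x)$ in two ways using ${}^* H_s$-equivariance of the $\phi'''$'s yields the twisted cocycle identity
\[ c(h+k,l)(x) + c(h,k)(x) = c(h,k+l)(x) + c(k,l)(x-h) \]
whenever $h,k,l$ together with their pairwise and triple sums lie in $E$.

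Since $\st \phi'''_h(x) = \st \aderiv_h g^{**}(x)$ for $\mu_{G^2}$-almost every $(h,x)$ and $\aderiv g^{**}$ is a genuine groupoid homomorphism (directly from \eqref{pizw}), $c(h,k)(x)$ is infinitesimal for $\mu_{G^3}$-almost every $(h,k,x)$. As $\Gamma_s = 1$ makes $H_s$ a compact abelian Lie group, the logarithm is a diffeomorphism from a neighborhood of $1 \in H_s$ onto a neighborhood of $0$ in its finite-dimensional Lie algebra $V$; by underspill we pass to an internal full-measure subset on which $c$ takes values here and henceforth view $c$ as a ${}^* V$-valued internal polynomial. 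Define the internal polynomial
\[ a(h)(x) := {}^* \E_{l \in G} \, c(h,l)(x). \]
Using the twisted cocycle identity together with translation invariance of the internal expectation in $l$, a direct calculation verifies that
\[ a(h)(x) + a(k)(x-h) - a(h+k)(x) = c(h,k)(x) \]
for $\mu_{G^3}$-almost every $(h,k,x)$.

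Both sides of this equation are internal polynomials in $x$ of bounded degree agreeing $\mu_G$-almost everywhere, so Corollary \ref{close} (applied to the trivially filtered vector space ${}^* V$) implies their difference is an infinitesimal constant $c_0(h,k) \in {}^* V$ independent of $x$, for $\mu_{G^2}$-almost every $(h,k)$. Subtracting the coboundary $\delta a$ from the twisted cocycle $c$ makes the twisted terms cancel and shows that $c_0$ satisfies the \emph{untwisted} cocycle identity $c_0(h+k,l) + c_0(h,k) = c_0(h,k+l) + c_0(k,l)$, so Lemma \ref{cocy-triv} produces an infinitesimal internal function $b \colon G \to {}^* V$ with $c_0 = -\delta b$. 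Replacing $a$ by $\tilde a(h)(x) := a(h)(x) + b(h)$ yields $\delta \tilde a = c$ exactly on the relevant full-measure set. Set $\hat a(h)(x) := \exp(\tilde a(h)(x)) \in {}^* H_s$ and $\hat\phi_h(x) := \hat a(h)(x) \cdot \phi'''_h(x)$ (with $\hat a(h)(x)$ acting on $F_{x-h}$), extended arbitrarily to an internal map on all of $G$. The ${}^* H_s$-equivariance of $\phi'''_k(x-h)$ lets $\hat a(h)(x)$ commute past it, so $\delta \tilde a = c$ translates precisely into the exact identity $(h,\hat\phi_h)(k,\hat\phi_k) = (h+k,\hat\phi_{h+k})$ for $\mu_{G^2}$-almost all $(h,k)$; because $\tilde a$ is infinitesimal and $H_s$ is central, $\hat\phi_h$ remains an internal polynomial with $\st \hat\phi_h = \st \aderiv_h g^{**}$ almost everywhere, placing $(h,\hat\phi_h) \in \mathcal{G}_0 \cap \mathcal{G}^{**}$.

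Applying Lemma \ref{repair} to the internal almost-homomorphism $h \mapsto (h,\hat\phi_h) \colon G \to \mathcal{G}$ produces a unique internal homomorphism $h \mapsto (h,\phi''''_h)$ agreeing with $\hat\phi$ on a full-measure internal set $E$; the external-subgroup clause of Lemma \ref{repair} ensures that $\phi''''$ still takes values in $\mathcal{G}_0 \cap \mathcal{G}^{**}$, yielding the desired object. The main technical obstacle is the twist in the coboundary equation: because the right-hand side involves the shifted evaluation $a(k)(x-h)$ rather than $a(k)(x)$, Lemma \ref{cocy-triv} does not apply directly to $c$, forcing the two-stage trivialization sketched above---explicit averaging to solve the twisted equation up to a constant in $x$, then Lemma \ref{cocy-triv} to eliminate the residual untwisted $2$-cocycle $c_0$---so that an \emph{exact} (rather than merely approximate) almost-homomorphism identity is available for Lemma \ref{repair}.
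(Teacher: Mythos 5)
Your proof targets the same chain of ideas as the paper (measure the failure of $\phi'''$ to be a homomorphism by a cocycle $c$, trivialise it via Lemma \ref{cocy-triv}, and repair with Lemma \ref{repair}), but you miss the single observation that renders the paper's argument much shorter: one should apply Corollary \ref{close} \emph{to $c(h,k)$ itself} (comparing it with the constant polynomial $1$), which shows immediately that $c(h,k)(x)$ is independent of $x$, with $\st c(h,k)=1$. Once $c(h,k)$ is a constant of ${}^*H_s$, the cocycle identity you derive becomes genuinely untwisted --- the shifted argument $c(k,l)(x-h)$ is the same as $c(k,l)(x)$ --- and Lemma \ref{cocy-triv} applies directly, with no averaging step at all. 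The paper then simply sets $\phi''''_h := f(h)^{-1}\phi'''_h$ and is done. (Also note the paper deliberately leaves Lemma \ref{repair} to the paragraph \emph{after} the proposition; the statement only asks for a partial homomorphism on $E$.)

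Because you do not notice that $c(h,k)$ is constant, you introduce an averaging $a(h)(x):= {}^*\E_l c(h,l)(x)$ and treat the twist as a real obstacle. This detour has several soft spots. First, $c$ is only defined on $\{(h,k)\in E\times E: h+k\in E\}$, so the internal average over all $l\in G$ involves an (arbitrary) internal extension of $c$, and the identity $\delta a = c$ therefore holds only up to an infinitesimal error, not exactly as you assert; this is silently inconsistent with your later claim that $c_0$ is (possibly) nonzero, since exact agreement a.e.\ combined with Corollary \ref{close} would force $c_0=0$. Second, you invoke Corollary \ref{close} for the ``trivially filtered vector space ${}^*V$'', but that corollary is stated and proved for compact filtered \emph{nilmanifolds} $H/\Gamma$ (the proof uses cocompactness of $\Gamma$); applying it to a vector space requires at minimum passing to a quotient torus and lifting back, which you do not carry out. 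Third, your argument tacitly assumes that $\log c(h,l)(\cdot)$ and hence $a(h)(\cdot)$ are internal polynomials in $x$ of bounded degree; in the paper this issue never arises because the logarithm is taken only of the (already constant-in-$x$) $c(h,k)$. None of these gaps is likely fatal, but each would need to be patched, whereas the paper's single application of Corollary \ref{close} dissolves the twist from the outset and renders the whole averaging apparatus unnecessary.
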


\begin{proof} Let $\phi'''_h$ and $E$ be as in Proposition \ref{third-phi}.
If $h,k,h+k \in E$, we have
\begin{equation}\label{coco}
(h+k, \phi'''_{h+k}) = (0, c(h,k)) (h, \phi'''_h) (k, \phi'''_k)
\end{equation}
for some $(0,c(h,k)) \in {\mathcal G}_0 \cap {\mathcal G}^{**}$, and from \eqref{stad} we see that $\st c(h,k)(x) = 1$ for $\mu_G$-almost all $x \in G$.  Applying Corollary \ref{close}, we conclude that $c(h,k) \in {}^* H_s$ is a constant independent of $x$ with $\st c(h,k) = 1$.  In particular the group element $(0,c(h,k))$ appearing in \eqref{coco} is central in ${\mathcal G}_0$.

Now suppose that $x,h,k,l$ are such that $h,k,l,h+k,k+l,h+k+l \in E$.  By several applications of \eqref{coco} we conclude that $(h+k+l,\phi'''_{h+k+l})$ is equal to both
$$ (0, c(h,k) c(h+k,l)) (l,\phi'''_l) (k,\phi'''_k) (h,\phi'''_h)$$
and to
$$ (0, c(h,k+l) c(k,l)) (l,\phi'''_l) (k,\phi'''_k) (h,\phi'''_h)$$
leading to the cocycle equation
$$ c(h,k) c(h+k,l) = c(h,k+l) c(k,l)$$
whenever $h,k,l,h+k,k+l,h+k+l \in E$.  Taking logarithms (using the fact that $\st c(h,k)= 1$ and that $H_s$ is a abelian Lie group) and using Lemma \ref{cocy-triv}, we conclude that there exists an internal function $f \colon E \to {}^* H_s$ such that
$$ c(h,k) = f(h+k) f(h)^{-1} f(k)^{-1}$$
whenever $h,k,h+k \in E$, and also $\st f(h) = 1$ for all $h \in E$.  If one then defines
$$ \phi''''_h(x) \coloneqq f(h)^{-1} \phi'''_h(x)$$
for $h \in E$, we now see that $h \mapsto (h,\phi''''_h)$ is an internal map from $E_\eps$ to ${\mathcal G}_0 \cap {\mathcal G}^{**}$ with the property that
$$ (h+k, \phi''''_{h+k}) = (k, \phi''''_k) (h, \phi''''_h)$$
whenever $h,k,h+k \in E$.
\end{proof}

By Proposition \ref{fourth-phi} and Lemma \ref{repair} we thus obtain the required internal homomorphism $h \mapsto (h, \phi_h)$ from $G$ to ${\mathcal G}_0 \cap {\mathcal G}^{**}$, giving Proposition \ref{exact-prop}.  The proof of Proposition \ref{propstab} is now complete.

\section*{Acknowledgments} 

We thank Uwe Stroisnki and James Leng for corrections, and the anonymous referee for useful suggestions.

\bibliographystyle{amsplain}


\begin{dajauthors}
\begin{authorinfo}[aj]
  Asgar Jamneshan\\
	Ko\c{c} University\\
College of Sciences\\
Rumeli Feneri Yolu\\
34450 Sar\i yer\\
$\dot{\mathrm I}$stanbul, T\"urkiye\\
	ajamneshan\imageat{}ku\imagedot{}edu\imagedot{}tr\\
  \url{https://asgarjam.wixsite.com/home}
\end{authorinfo}
\begin{authorinfo}[tao]
  Terence Tao\\
  Department of Mathematics, UCLA\\
  Los Angeles, CA 90095\\
  USA\\
  tao\imageat{}math\imagedot{}ucla\imagedot{}edu \\
  \url{https://www.math.ucla.edu/\~tao}
\end{authorinfo}
\end{dajauthors}

\end{document}